\newcommand{\lbl}[1]{\label{#1}}
\newtheorem{theo}{Theorem}[section]
\newtheorem{prop}[theo]{Proposition}
\newtheorem{lem}[theo]{Lemma}
\newtheorem{rem}[theo]{Remark}
\newcommand{\be}{\begin{equation}}
\newcommand{\ee}{\end{equation}}
\newcommand\bes{\begin{eqnarray}} \newcommand\ees{\end{eqnarray}}
\newcommand{\bess}{\begin{eqnarray*}}
\newcommand{\eess}{\end{eqnarray*}}
\newcommand\bedd{\bess\left\{\begin{array}{ll}\medskip}
\newcommand\eedd{\end{array}\right.\eess}
\newcommand\vp{\varphi}
\newcommand\dd{\displaystyle}
\newcommand\ty{\infty}
\newcommand\rr{\right}
\newcommand\R{\mathbb{R}}
\begin{document}

\author[Y. Du, M.X. Wang \& M. Zhou]{Yihong Du$^\dag$, Mingxin Wang$^\ddag$ and Maolin Zhou$^\S$}
\thanks {$^\dag$ 
School of Science and Technology, University of New England, Armidale, NSW 2351, Australia.\ E-mail: ydu@turing.une.edu.au.}
\thanks{$^\ddag$
Natural Science Research Center,
Harbin Institute of Technology,
Harbin 150080, China. \ E-mail: mxwang@hit.edu.cn.}
\thanks{$^\S$
School of Science and Technology, University of New England, Armidale, NSW 2351, Australia. \ E-mail: zhouutokyo@gmail.com.
}

\title[Semi-wave and spreading speed for the competition model]{Semi-wave and spreading speed for the diffusive competition model with a free boundary}
\thanks {This work was supported by the Australian Research Council and also by NSFC Grant 11371113. Y. Du thanks Prof. Zhigui Lin for some interesting discussions.}
\date{\today}

\begin{abstract} We determine the asymptotic spreading speed of an invasive species, which invades
the territory of a native competitor,  governed by a diffusive competition model with a free boundary in a spherically symmetric setting. This free boundary problem was studied recently in \cite{DLin1},
but only rough bounds of the spreading speed was obtained there.
We show in this paper that there exists an asymptotic  spreading speed, which is determined by a certain traveling wave type system of one space dimension, called a semi-wave. This appears to be the first result that gives the precise
asymptotic spreading speed for a two species system  with free boundaries.
\end{abstract}

\keywords{Competition model; Free boundary problem; Semi wave; Asymptotic spreading speed.}
\subjclass{
35K51, 35R35, 92B05, 35B40.}
 
\maketitle
\def\theequation{\arabic{section}.\arabic{equation}}

\setlength{\baselineskip}{16pt}

 \section{Introduction}
 \setcounter{equation}{0} {\setlength\arraycolsep{2pt}

The main purpose of this paper is to determine the asymptotic spreading speed of an invasive species, which invades
the territory of a native competitor,  according to the following diffusive competition model with a free boundary in $\R^N$ ($N\geq 1$), in a spherically symmetric setting:
\begin{align}
\left\{
\begin{array}{lll}
U_{t}-d_1 \Delta U=U(a_1-b_1 U-c_1 V),\; & t>0, \ 0\leq r<H(t),  \\
V_{t}-d_2 \Delta V=V(a_2-b_2 U-c_2 V),\; & t>0, \ 0\leq r<\infty,  \\
U_r(t,0)=V_r(t,0)=0,\; U(t, r)=0,\quad & t>0,\ H(t)\leq r<\infty,\\
H'(t)=-\hat\mu U_r(t,H(t)),\quad &t>0,\\
H(0)=H_0,  U(0, r)=U_{0}(r),\;  &0\leq r\leq H_0,\\
 V(0, r)=V_0(r), &0\leq r<\infty,\\
\end{array} \right.
\label{f1}
\end{align}
where $r=|x|$, $\Delta U=U_{rr}+\frac{N-1}{r}U_r,\ r=H(t)$ is the moving
boundary to be determined, $H_0$, $\hat\mu$, $d_i$,
 $a_i$, $b_i$ and $c_i\; (i=1,2)$ are given positive constants, and the initial functions
$U_0$  and  $V_0$ satisfy
\begin{align}
\left\{
\begin{array}{ll}
U_0\in C^{2}([0, H_0]), \ U_0'(0)=U_0(H_0)=0\ \ \textrm{and} \ U_0>0\ \textrm{in}\ [0, H_0),\\
V_0\in C^{2}([0, +\infty))\cap L^\infty(0, +\infty), \ V_0'(0)=0\ \ \textrm{and} \ V_0\geq,\not\equiv 0\
\textrm{in}\ [0, +\infty).
\end{array} \right.
\label{Ae}
\end{align}
 In this model, the first
species ($U$), which exists initially in the ball $\{r<H_0\}$,  invades into the new territory through
its range boundary $\{r=H(t)\}$ (the invading front), which evolves according to the free boundary condition $H'(t)=-\hat\mu U_r(t, H(t))$, where $\hat\mu$ is a given positive constant.
  The second species ($V$) is
native, which undergoes diffusion and growth in the entire available habitat (assumed to be $\R^N$ here).
The constants $d_1$ and $d_2$ are the diffusion rates of $U$ and $V$,
respectively, $a_1$ and $a_2$ are the intrinsic growth rates, $b_1$ and $c_2$ are the intraspecific and
   $c_1$ and $b_2$ the interspecific competition rates.

Problem \eqref{f1} has been studied in \cite{DLin1} recently. It is shown in \cite{DLin1} that, if $U$ is an inferior competitor, characterized by
\begin{equation}\label{inf}
\frac{a_1}{a_2}<\min\left\{\frac{b_1}{b_2},\frac{c_1}{c_2}\right\},
\end{equation}
then the invasion of $U$  always fails, in the sense that
\[
\lim_{t\to+\infty}(U(t,\cdot), V(t,\cdot))=\left(0,\frac{a_2}{c_2}\right) \mbox{ in } L^\infty_{loc}([0,+\infty)).
\]
On the other hand, if $U$ is a superior competitor, namely,
\begin{equation}\label{sup}
\frac{a_1}{a_2}>\max\left\{\frac{b_1}{b_2},\frac{c_1}{c_2}\right\},
\end{equation}
then the fate of the invasion of $U$ is determined by a spreading-vanishing dichotomy:

Either
\begin{itemize}
\item[(i)]({\it Spreading of $U$}).\ \ $\lim_{t\to+\infty}H(t)=+\infty$ and $\lim_{t\to
+\infty}\big(U(t, r), V(t,r)\big)=\left(\frac{a_1}{b_1}, \, 0\right)$ uniformly in any compact subset of $[0, \infty)$;
\end{itemize}

or
\begin{itemize}
\item[(ii)]({\it Vanishing of $U$}).\ \ $\lim_{t\to+\infty}H(t) <+\infty$\ and \ $\lim_{t\to +\infty}\|U(t,
\cdot)\|_{C([0,H(t)])}=0$, \\ 
$\lim_{t\to\infty}V(t,r)=\frac{a_2}{c_2}$ uniformly in  any compact subset of $[0,\infty)$.
\end{itemize}

Sharp criteria for spreading and vanishing of $U$ are  given in \cite{DLin1} (see Theorem 4.4 there). In particular, under the condition \eqref{sup},
 if $H_0$ is greater than a certain number determined by an eigenvalue problem, then spreading of $U$ always happens. However, when spreading of $U$ happens, only rough estimates for the spreading speed of the invading front $\{r=H(t)\}$ are obtained in \cite{DLin1}. Indeed, determining the precise spreading speed for population {\it systems} with free boundaries has been a difficult problem in general, and this paper appears to be the first to provide a positive answer to this question.

Without a native competitor in the environment  (namely in the case $V\equiv 0$), \eqref{f1} reduces to a free boundary problem for $U$ considered in \cite{DG}, which extends the earlier work \cite{DLin} from one space dimension to the radially symmetric higher space dimension case. In these relatively simpler situations a spreading-vanishing dichotomy is known, and when spreading happens, the spreading speed has been determined through a semi-wave problem involving a single equation.
More general results in this direction can be found in \cite{DGP, DLiang, DLou, DLZ, DMZ1, DMZ2, PZ}, where \cite{DGP, PZ}
considers time-periodic environment, \cite{DLiang} studies space-periodic environment, \cite{DLou, DLZ, DMZ1, DMZ2}
investigates more general reaction terms. In all these cases, the spreading speed has been established, and 
 sharp estimates of the spreading profile and speed can be found in \cite{DMZ1, DMZ2}. In contrast, the research for systems with free boundaries has been less advanced, due to the extra difficulties arising in the system setting. In \cite{DLin1, GW, GW1, WZjdde, Wu}, various two species competition models with free boundaries have been studied, and
\cite{Wjde, WZ12, ZW} have considered two species predator-prey models with free boundaries. However, the question of whether there is a precise asymptotic spreading speed 
has been left open in \cite{DLin1, GW, GW1, Wjde, WZ12, WZjdde, Wu, ZW}. 

Before stating our results, we first employ the standard change of variables to reduce \eqref{f1} to a simpler form.
Set
\[
u(t,r):=\frac{b_1}{a_1}\,U\left(\frac{t}{a_2}, \sqrt{\frac{d_2}{a_2}}\,r\right),\; v(t,r):=\frac{c_2}{a_2}\, V\left(\frac{t}{a_2}, \sqrt{\frac{d_2}{a_2}}\,r\right),\; h(t)=\sqrt{\frac{a_2}{d_2}}\,H\left(\frac{t}{a_2}\right),
\]
\[
a:=\frac{a_1b_2}{a_2b_1},\; b:=\frac{a_2c_1}{a_1c_2},\;  d:=\frac{d_1}{d_2},\; r:=\frac{a_1}{a_2},\; \mu:=\frac{a_1}{b_1d_2}\hat \mu.
\]
Then a simple calculation shows that \eqref{f1} is equivalent to
\begin{align}
\left\{
\begin{array}{lll}
u_{t}-d \Delta u=ru(1- u-b v),\; & t>0, \ 0\leq r<h(t),  \\
v_{t}- \Delta v=v(1-v-a u),\; & t>0, \ 0\leq r<\infty,  \\
u_r(t,0)=v_r(t,0)=0,\; u(t, r)=0,\quad & t>0,\ h(t)\leq r<\infty,\\
h'(t)=-\mu u_r(t,h(t)),\quad &t>0,\\
h(0)=h_0,  u(0, r)=u_{0}(r),\;  &0\leq r\leq h_0,\\
 v(0, r)=v_0(r), &0\leq r<\infty,\\
\end{array} \right.
\label{p1}
\end{align}
with
\[
h_0:=\sqrt{\frac{a_2}{d_2}}\,H_0,\; u_0(r)=\frac{b_1}{a_1}\,U_0\left( \sqrt{\frac{d_2}{a_2}}\,r\right),\; v_0(r)=\frac{c_2}{a_2}\,V_0\left( \sqrt{\frac{d_2}{a_2}}\,r\right).
\]
 Let us note that under these transformations, \eqref{inf} is equivalent to
\[
a<1<b,
\]
and 
\eqref{sup} is equivalent to 
\[
a>1>b.
\]
Moreover, the spreading-vanishing dichotomy of \cite{DLin1} shows that, for the case $a>1>b$,
either
\begin{itemize}
\item[(i)] ({\it Spreading of $u$}).\ \ $\lim_{t\to+\infty}h(t)=+\infty$ and $\lim_{t\to
+\infty}\big(u(t, r), v(t,r)\big)=\left(1, \, 0\right)$ uniformly in any compact subset of $[0, \infty)$; or
\end{itemize}
\begin{itemize}
\item[(ii)] ({\it Vanishing of $u$}).\ \ $\lim_{t\to+\infty}h(t) <+\infty$\ and \ $\lim_{t\to +\infty}\|u(t,
\cdot)\|_{C([0,h(t)])}=0$,\\ $\lim_{t\to\infty}v(t,r)=1$ uniformly in  any compact subset of $[0,\infty)$.
\end{itemize}

From now on, we will only consider the simplified version \eqref{p1}. The main result of this paper is the following theorem.

\begin{theo}\label{main}
Suppose that $a>1>b$,
\bes\lbl{v_0}
\liminf_{r\to+\infty} v_0(r)>0,
\ees
 and spreading of $u$ happens for \eqref{p1}. Then there exists a unique $s_\mu>0$ such that
\[
\lim_{t\to+\infty}\frac{h(t)}{t}=s_\mu.
\]
Moreover, $s_\mu$ is strictly increasing in $\mu$ and
\[s_0:=\lim_{\mu\to+\infty}s_\mu<+\infty.
\]
\end{theo}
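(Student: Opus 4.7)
The key auxiliary object is a \emph{semi-wave}: a triple $(c,\phi,\psi)$ with $c>0$, $\phi\in C^2((-\infty,0])$ and $\psi\in C^2(\mathbb{R})$ satisfying
\bess
& & d\phi''+c\phi'+r\phi(1-\phi-b\psi)=0 \quad \text{for } z<0,\\
& & \psi''+c\psi'+\psi(1-\psi-a\phi)=0 \quad \text{for } z\in\mathbb{R},\\
& & \phi(-\infty)=1,\ \phi(0)=0,\ \psi(-\infty)=0,\ \psi(+\infty)=1,
\eess
together with the free-boundary matching condition $c=-\mu\phi'(0)$. If such a triple exists, then $\big(\phi(x-ct),\psi(x-ct),ct\big)$ is an exact travelling-wave solution of the one-dimensional version of \eqref{p1}, which identifies $c$ as the natural candidate for the asymptotic spreading speed. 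The proof naturally splits into: (I) existence, uniqueness and properties of $s_\mu$ extracted from this semi-wave problem alone, and (II) a parabolic comparison argument linking $h(t)/t$ to $s_\mu$.

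\smallskip
\noindent\textbf{Part I.} Let $c^\ast$ denote the KPP linear spreading speed of the $u$-equation at the equilibrium $(u,v)=(0,1)$, namely $c^\ast=2\sqrt{rd(1-b)}$. For each $c\in(0,c^\ast)$ I would first construct a profile pair $(\phi_c,\psi_c)$ solving the two ODEs and the asymptotic conditions at $\pm\infty$ but \emph{without} imposing $c=-\mu\phi'(0)$. This is done by truncating to $[-L,0]$ for $\phi$ and $[-L,L]$ for $\psi$, exploiting the partial cooperativity obtained from the change $\tilde\psi:=1-\psi$ (so that $(\phi,\tilde\psi)$ is quasi-monotone), employing suitable super/sub-solutions (for instance, comparing $\psi$ from above with the KPP travelling wave of its uncoupled equation), and letting $L\to\infty$ via Schauder estimates. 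Uniqueness of $(\phi_c,\psi_c)$ and the strict sign $\phi_c'(0)<0$ then follow from the strong maximum principle and sliding arguments. I would next prove that $c\mapsto-\phi_c'(0)$ is continuous and strictly decreasing on $(0,c^\ast)$, with $-\phi_c'(0)\to+\infty$ as $c\to 0^+$ and $-\phi_c'(0)\to 0$ as $c\to c^{\ast-}$. These properties immediately yield a unique root $s_\mu\in(0,c^\ast)$ of $c+\mu\phi_c'(0)=0$ for every $\mu>0$, the strict monotonicity of $\mu\mapsto s_\mu$, and the bound $s_0=\lim_{\mu\to+\infty}s_\mu\le c^\ast<+\infty$.

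\smallskip
\noindent\textbf{Part II.} For the upper bound $\limsup h(t)/t\le s_\mu$, I would use the semi-wave of speed $s_\mu+\varepsilon$ as a radial super-solution of \eqref{p1}: the inequality $s_\mu+\varepsilon>-\mu\phi_c'(0)$ gives the free-boundary condition the correct sign, and the extra term $(N-1)u_r/r$ in the radial Laplacian has the favourable sign near the front (where $u_r<0$), so standard parabolic comparison yields $h(t)\le(s_\mu+\varepsilon)t+C$. For the lower bound, I would use the spreading hypothesis together with \eqref{v_0} to obtain, for any small $\eta>0$ and large $R>0$, a time $T_0$ after which $u\ge 1-\eta$ and $v\le\eta$ uniformly on $\{r\le R\}$; then glue a translated semi-wave of speed $s_\mu-\varepsilon$ on an expanding annulus onto the plateau $(1-\eta,\eta)$ in the core to form a generalised sub-solution valid for $t\ge T_0$, which propagates to give $h(t)\ge(s_\mu-\varepsilon)t-C$.

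\smallskip
\noindent\textbf{Main obstacle.} The delicate step is the lower-bound sub-solution. The competition system is not monotone in the usual sense, so one must work with the quasi-monotone pair $(\phi,1-\psi)$, carefully align the spatial shift and initial gluing window so that the sub-solution is dominated by $(u,v)$ at $t=T_0$ and satisfies $\underline h'(t)\le-\mu\underline u_r(t,\underline h(t))$, and verify the patching inequalities at the interior gluing interface. Because $\psi_c$ does not vanish identically on $(-\infty,0]$ but only at $-\infty$, some care is needed to reconcile its values with the plateau $v\le\eta$ behind the front; choosing $R$ sufficiently large and then letting $\eta\to 0$ at the end absorbs the resulting errors. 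This is where the bulk of the technical work lies, and where the system setting departs most sharply from the scalar free boundary theory.
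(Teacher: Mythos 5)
Your overall architecture — reduce the speed to a semi-wave problem with a free-boundary matching condition, then run a comparison argument — is the same as the paper's, but two concrete steps in your plan would fail.

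\medskip
\noindent\textbf{The speed threshold is misidentified.}\ You take $c^\ast=2\sqrt{rd(1-b)}$, the linearized KPP speed, as the right endpoint of the $c$-interval on which semi-waves exist, and you assert $-\phi_c'(0)\to 0$ as $c\to c^{\ast-}$. Neither is correct in general. The actual threshold is the Kan-on minimal travelling-wave speed $s_0$ of the full system on the line, and Proposition \ref{kan-on} only guarantees $s_0\in[2\sqrt{rd(1-b)},\,2\sqrt{rd}]$: nonlinear speed selection can force $s_0>2\sqrt{rd(1-b)}$. When that happens, semi-waves still exist (and have $\phi_c'(0)<0$) for $c$ in the whole interval $[0,s_0)$, so $-\phi_c'(0)$ does \emph{not} tend to $0$ as $c\to(2\sqrt{rd(1-b)})^-$; it is bounded below by a positive constant there. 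Consequently the equation $\mu\,(-\phi_c'(0))=c$ has no root on $(0,2\sqrt{rd(1-b)})$ once $\mu$ is large, and your Part I fails to produce $s_\mu$ for such $\mu$, let alone establish $\lim_{\mu\to\infty}s_\mu=s_0<+\infty$. The paper's Lemmas \ref{s*>}--\ref{s*=s0} prove that semi-waves exist precisely for $s\in[0,s_0)$ (super-solutions built from the Kan-on wave are essential for the $s^\ast\ge s_0$ direction, and a sliding argument against the Kan-on wave gives nonexistence for $s\ge s_0$), and Lemma \ref{map} shows $\psi_s'(0)\to 0$ only as $s\to s_0^-$. That is the input that makes the definition of $s_\mu$ and the limit $\lim_{\mu\to\infty}s_\mu=s_0$ work for \emph{all} $\mu>0$.

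\medskip
\noindent\textbf{The raw semi-wave cannot serve as a comparison triple.}\ In your Part II you propose using the semi-wave of speed $s_\mu\pm\varepsilon$ directly. But the $u$-profile of the semi-wave is strictly below $1$, while the solution $u(T,\cdot)$ is only known to be $\le 1+\delta$ for $T$ large — so the initial ordering $\bar u(0,\cdot)\ge u(T,\cdot)$ cannot be achieved with the unperturbed semi-wave. The paper circumvents this by solving the $\delta$-perturbed semi-wave system \eqref{delta-sw} (respectively \eqref{1+delta-sw}), whose profiles converge to $1+2\delta$ and $1-\delta$ at the tails, and then invoking continuity of $s_\mu^\delta$ in $\delta$ (Remark \ref{remark1}) to pass to the limit. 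Similarly, for the lower bound the $v$-profile $\varphi_\delta$ of the semi-wave decays to $0$ behind the front, so taken literally it cannot dominate $v$ there; the paper's Lemma \ref{lem3.3} explicitly flattens $\varphi_\delta$ into a constant positive level $\tilde\vp_\delta(\xi_1)$ behind the front and reverifies the differential inequalities, which takes real work. Your proposal acknowledges the patching issue in the lower bound but offers no mechanism that actually fixes it; ``choosing $R$ large and letting $\eta\to 0$'' does not supply the required pointwise ordering of $v$ and the sub-solution at $t=T_0$. You should adopt the $\delta$-perturbation plus profile-flattening strategy (or an equivalent) to make Part II rigorous.
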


We will show that $s_0$ is the minimal speed of the traveling waves for competition systems considered by Kan-on  in \cite{Kan}.
More precisely, 
 by a suitable transformation, we can apply the result of \cite[Theorem 2.1]{Kan} 
to the  problem 
 \bes\left\{\begin{array}{ll}
 s\Phi'-\Phi''=\Phi(1-\Phi-a\Psi),\; \Phi'<0,\ \ &\xi\in\mathbb{R},\\[1mm]
 s\Psi'-d\Psi''=r\Psi(1-\Psi-b\Phi),\; \Psi'>0, \ \ &\xi\in\mathbb{R},\\[1mm]
(\Phi,\Psi)(-\infty)=(1,0), \ \ (\Phi,\Psi)(\infty)=(0,1)
\end{array}\right.\lbl{3.1}\ees
and obtain the following result.

\begin{prop}
\lbl{kan-on} Suppose that $a>1>b$. Then there exists a constant 
\[
s_0\in \left[2\sqrt{rd(1-b)}, 2\sqrt{rd}\,\right]
\]
 such that  problem \eqref{3.1}
has a  solution when $s\geq s_0$ and it has no solution when $s<s_0$. 
\end{prop}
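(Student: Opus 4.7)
The plan is to view \eqref{3.1} as the traveling-wave system, with speed $s$, for the two-species competition--diffusion problem
\bess
\Phi_t&=&\Phi_{xx}+\Phi(1-\Phi-a\Psi),\\
\Psi_t&=&d\Psi_{xx}+r\Psi(1-\Psi-b\Phi),
\eess
and to seek monotone connections between the kinetic equilibria $(1,0)$ and $(0,1)$. Under the hypothesis $a>1>b$, the Jacobian of the reaction at $(1,0)$ is triangular with eigenvalues $-1$ and $r(1-b)>0$ (a saddle), while at $(0,1)$ its eigenvalues are $1-a<0$ and $-r<0$ (a stable node). Thus \eqref{3.1} lies in the monostable configuration treated in \cite{Kan}. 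After a cosmetic relabeling (possibly $\xi\mapsto-\xi$ together with a swap of the two components) to match the normalization used by Kan-on, Theorem~2.1 of \cite{Kan} directly supplies a threshold $s_0>0$ such that \eqref{3.1} admits a solution if and only if $s\geq s_0$.

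For the lower bound $s_0\geq 2\sqrt{rd(1-b)}$, I would linearize \eqref{3.1} at $\xi=-\infty$, where $\Phi\to 1$ and $\Psi\to 0^+$. The second equation of \eqref{3.1} then reduces asymptotically to
\[
d\Psi''-s\Psi'+r(1-b)\Psi=0.
\]
Since $\Psi>0$ and $\Psi\to 0$, a standard ODE argument forces both characteristic roots of $d\lambda^2-s\lambda+r(1-b)=0$ to be real (otherwise $\Psi$ would oscillate through $0$), which yields $s^2\geq 4rd(1-b)$.

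For the upper bound $s_0\leq 2\sqrt{rd}$, the plan is to exhibit a solution of \eqref{3.1} for every $s\geq 2\sqrt{rd}$ by a sub/super-solution scheme. Using the elementary inequality $r\Psi(1-\Psi-b\Phi)\leq r\Psi(1-\Psi)$, the classical Fisher--KPP profile $W$ with $W(-\infty)=0$, $W(+\infty)=1$ that solves $dW''-sW'+rW(1-W)=0$ (which exists for every $s\geq 2\sqrt{rd}$) furnishes a super-solution for $\Psi$, while $\Phi\equiv 1$ is a super-solution for $\Phi$; matching sub-solutions can be constructed from shifted, truncated exponentials of the unstable modes of the kinetics at $(1,0)$. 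After the change of variables $\tilde\Phi=1-\Phi$, which converts \eqref{3.1} into a cooperative system on $[0,1]^2$, standard monotone iteration produces a nontrivial solution.

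The main technical obstacle is precisely this last step: preventing the monotone iteration from collapsing to either trivial equilibrium $(1,0)$ or $(0,1)$ requires a compatible choice of sub-solutions at both $\xi=\pm\infty$, and it is not enough to rely only on the linearization at $-\infty$ that gives the lower bound. This delicate construction is carried out in \cite{Kan}, so we appeal to Theorem~2.1 there rather than reproduce the details; the explicit bounds on $s_0$ then follow from the two linearization/comparison arguments above.
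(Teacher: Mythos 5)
Your proposal follows essentially the same route as the paper: both reduce \eqref{3.1} to the monostable competition system covered by Theorem~2.1 of \cite{Kan} and read off the threshold speed from there. Two remarks. First, the ``cosmetic relabeling'' is in fact a genuine rescaling: the paper sets $u(\xi)=\Psi(\sqrt{d/r}\,\xi)$, $v(\xi)=\frac1r\Phi(\sqrt{d/r}\,\xi)$ and $\tilde s=-s/\sqrt{dr}$, and it is precisely this rescaling of $\xi$, of the $\Phi$-amplitude, and of the speed that turns Kan-on's normalized minimal-speed interval into $\left[2\sqrt{rd(1-b)},\,2\sqrt{rd}\,\right]$; your independent linearization and KPP-comparison sketches recover the same bounds, so nothing is lost, but the explicit interval comes for free from the cited theorem once the transformation is written down. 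Second, and this is the one genuine gap: problem \eqref{3.1} requires the \emph{strict} inequalities $\Phi'<0$ and $\Psi'>0$, whereas \cite{Kan} only asserts that the profiles are monotone in the weak sense. Your argument as written therefore does not yet produce a solution of \eqref{3.1}. The fix is standard and is the one the paper uses: differentiate the system and apply the strong maximum principle to the cooperative system satisfied by the pair $(\Psi',-\Phi')$ (equivalently $(u',-v')$ in the transformed variables) to upgrade weak to strict monotonicity. With that one sentence added, your proof is complete and coincides with the paper's.
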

The number $s_0$ is called the {\it minimal speed} of (\ref{3.1}).
\begin{proof}
If we define
\[
u(\xi):=\Psi\left(\sqrt{\frac dr}\,\xi\right),\; v(\xi):= \frac 1r \Phi\left(\sqrt{\frac dr}\,\xi\right),
\]
and
\[
\tilde a:=\frac 1r,\; \tilde b:=\frac ar,\; \tilde c:=br,\; \tilde d:=\frac 1d,\; \tilde s:=-\frac{s}{\sqrt{dr}},
\]
then a direct computation shows that \eqref{3.1} is equivalent to
\bes\left\{\begin{array}{ll}
u''+ \tilde s u'+ u(1-u-\tilde c v)=0, \; u'>0, &\xi\in\mathbb{R},\\[1mm]
\tilde d v''+ \tilde s v' +v(\tilde a- \tilde b u-v)=0,\; v'<0, &\xi\in\mathbb{R},\\[1mm]
(u,v)(-\infty)=(0,\tilde a), \ \ (u,v)(+\infty)=(1,0).
\end{array}\right.\lbl{3.2}\ees

Clearly
\[
a>1>b \mbox{ is equivalent to } \tilde a<\min\{\tilde b, 1/\tilde c\}.
\]
Hence the conclusions follow directly from Theorem 2.1 of \cite{Kan}, and the strong maximum principle for cooperative systems. We note that the original conclusions in \cite{Kan} do not include $u'>0$ and $v'<0$;
they  only state  that $u$ is nondecreasing and $v$ is nonincreasing. However, by the strong maximum principle applied to the system satisfied by the pair $(u', -v')$, one immediately obtains  $u'>0$ and $v'<0$.
\end{proof}

The value $s_\mu$ in Theorem \ref{main} is determined in the following result.

\begin{theo}\lbl{th3.1}\ Assume that $a>1>b$, and $s_0$ is given in Proposition \ref{kan-on}. Then  the problem
 \bes\left\{\begin{array}{ll}
 s\varphi'-\varphi''=\varphi(1-\varphi-a\psi)\ \ \ \ \ \ (\forall \xi\in\mathbb{R}),\\[1mm]
 s\psi'-d\psi''=r\psi(1-\psi-b\varphi)\ \ \ (\forall \xi>0),\\[1mm]
\varphi(-\infty)=1,\; \varphi'<0\  (\forall\xi\in\mathbb R),\;  \vp(+\infty)=0,\\[1mm]
\psi\equiv 0 \ (\forall \xi\leq 0),\  \psi'>0 \  (\forall \xi\geq 0), \ \psi(+\infty)=1,
 \end{array}\right.\lbl{3.2}\ees
has a unique solution $(\varphi,\psi)\in C^2(\mathbb R)\times \big[C(\mathbb R)\cap C^2([0, \infty))\big]$ for each $s\in[0,s_0)$, and it has no such solution for $s\geq s_0$. 
Furthermore,  if we denote the unique solution by $(\varphi_s,\psi_s)$  $( s\in[0,s_0))$, then the following conclusions hold.
\begin{itemize}
\item[(i)] If $0\le s_1<s_2<s_0$, then 
\[
\psi_{s_1}'(0)>\psi_{s_2}'(0),  \; \psi_{s_1}(\xi)>\psi_{s_2}(\xi) \; (\forall \xi>0), \; \vp_{s_1}(\xi)<\vp_{s_2}(\xi) \; (\forall \xi\in\R).
\]
\item[(ii)] The operator $s\mapsto (\vp_s, \psi_s)$ is continuous from $[0,s_0)$ to $C_{loc}^2(\mathbb R)\times C^2_{loc}([0,+\infty))$.
Moreover, 
\[
\mbox{$\lim_{s\to s_0}(\vp_s,\psi_s)=(1, 0)$ in $C_{loc}^2(\mathbb R)\times C^2_{loc}([0,+\infty))$.}
\]
\item[(iii)] For any $\mu>0$, there exists a unique $s=s_\mu\in (0, s_0)$ such that
\[
\mu \psi'_{s_\mu}(0)=s_\mu. 
\]
Moreover,
\[
\mu\mapsto s_\mu \mbox{ is strictly increasing and } \lim_{\mu\to+\infty}s_\mu=s_0.
\]
\end{itemize}
 \end{theo}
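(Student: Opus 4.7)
The plan is to prove existence and uniqueness for each admissible $s$, derive the monotonicity and continuity in $s$, and then locate $s_\mu$ by a shooting argument. Proposition~\ref{kan-on} supplies both the natural barriers for the existence construction (through the minimal-speed traveling wave $(\Phi_{s_0},\Psi_{s_0})$) and the threshold governing solvability. For existence with $s\in[0,s_0)$: I would approximate \eqref{3.2} on $[-L,L]$ with $\vp(-L)=1$, $\vp(L)=0$, $\psi(0)=0$, $\psi(L)=1$, and solve via monotone iteration between ordered upper and lower solutions. An upper solution pair can be taken as $(1,\Psi_{s_0}(\cdot-\xi_L))$ with $\xi_L$ adjusted so that $\Psi_{s_0}(-\xi_L)\approx 0$, while an ordered lower solution uses the trivial $(0,0)$ for $\vp$ and a compactly supported bump for $\psi$ near $0$ produced from the linearization of the $\psi$-equation at $(\vp,\psi)=(1,0)$; the strict inequality $s<s_0$ is needed to make this bump a genuine subsolution. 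Interior Schauder estimates and monotonicity in $L$ allow passage to $L\to+\infty$. Nonexistence for $s\ge s_0$ is proved by a sliding comparison: a hypothetical semi-wave at such a speed, compared against $(\Phi_{s_0},\Psi_{s_0})$, would yield a nontrivial full traveling wave at some speed $<s_0$, contradicting Proposition~\ref{kan-on}.

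\emph{Uniqueness, monotonicity (i), and continuity (ii).} Uniqueness is handled by a sliding argument based on the translation invariance of \eqref{3.2} on $\xi>0$, the strict monotonicity $\vp'<0$, $\psi'>0$, and the strong maximum principle. The same sliding argument applied at two distinct speeds $s_1<s_2$ yields the strict inequalities in (i); the boundary-point strict inequality $\psi_{s_1}'(0)>\psi_{s_2}'(0)$ then follows from $\psi_{s_i}(0)=0$, $\psi_{s_1}>\psi_{s_2}$ on $(0,\infty)$, and Hopf's lemma applied to $\psi_{s_1}-\psi_{s_2}$. Continuity of $s\mapsto(\vp_s,\psi_s)$ in $C^2_{loc}$ follows from uniqueness and standard interior plus boundary elliptic estimates: any subsequential limit solves \eqref{3.2} at the limit speed, hence coincides with $(\vp_{s^*},\psi_{s^*})$. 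The limit $s\to s_0^-$ produces via (i) a monotone limit $(\bar\vp,\bar\psi)$ solving \eqref{3.2} at $s=s_0$; the nonexistence result then forces $\bar\psi\equiv 0$ and, from the $\vp$-equation, $\bar\vp\equiv 1$.

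\emph{Existence and properties of $s_\mu$ (iii).} Define $G_\mu(s):=\mu\psi_s'(0)-s$ on $[0,s_0)$. By (ii) it is continuous, by (i) strictly decreasing. One has $G_\mu(0)=\mu\psi_0'(0)>0$ by Hopf's lemma applied to the nontrivial semi-wave at $s=0$, and $\lim_{s\to s_0^-}G_\mu(s)=-s_0<0$ since $\psi_s\to 0$ in $C^2_{loc}([0,\infty))$ forces $\psi_s'(0)\to 0$. The intermediate value theorem then produces the unique $s_\mu\in(0,s_0)$ with $\mu\psi_{s_\mu}'(0)=s_\mu$. For $\mu_1<\mu_2$, $G_{\mu_2}(s_{\mu_1})=(\mu_2-\mu_1)\psi_{s_{\mu_1}}'(0)>0$, so $s_{\mu_2}>s_{\mu_1}$ by monotonicity of $G_{\mu_2}$. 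If $s_\mu\to s^*<s_0$ along some sequence $\mu\to+\infty$, then $\psi_{s_\mu}'(0)=s_\mu/\mu\to 0$, contradicting $\psi_{s_\mu}'(0)\to\psi_{s^*}'(0)>0$ by continuity; hence $\lim_{\mu\to+\infty}s_\mu=s_0$.

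The main obstacle I foresee is the existence step: constructing a positive lower solution for $\psi$ on $[0,L]$ that is compatible with $\psi(0)=0$ and uses the strict inequality $s<s_0$ essentially, so that the monotone iteration does not collapse to the trivial solution. Equally delicate is the nonexistence statement at $s\ge s_0$, which underpins the limiting identification in (ii). Both hinge on a careful use of the minimal-speed characterization in Proposition~\ref{kan-on}, together with the strict monotonicity of the minimal-speed profile obtained there via the strong maximum principle.
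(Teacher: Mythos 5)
Your overall scaffolding (existence for $s<s_0$, nonexistence for $s\geq s_0$, uniqueness by sliding, then monotonicity/continuity feeding a shooting argument for $s_\mu$) matches the paper's, and parts (ii)--(iii) are essentially correct. But there is a genuine gap in the existence step, which is precisely the hard part here. Your subsolution for $\psi$ is a ``compactly supported bump produced from the linearization of the $\psi$-equation at $(\vp,\psi)=(1,0)$''; that linearization is $d\psi''-s\psi'+r(1-b)\psi=0$, and a positive bump subsolution of this type exists only when the characteristic roots are complex, i.e.\ when $s<2\sqrt{rd(1-b)}$. But Proposition \ref{kan-on} only gives $s_0\in[2\sqrt{rd(1-b)},\,2\sqrt{rd}\,]$, and in the nonlinearly determined regime one genuinely has $s_0>2\sqrt{rd(1-b)}$. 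So your construction covers only $s\in[0,2\sqrt{rd(1-b)})$ and says nothing for $s\in[2\sqrt{rd(1-b)},s_0)$; ``the strict inequality $s<s_0$'' is simply not what makes the bump a subsolution. The paper avoids this by never linearizing: it introduces $s^*=\sup_{(\varphi,\psi)\in K}F(\varphi,\psi)$ and proves $s^*\ge s_0$ (Lemma \ref{s*>}) by a delicate modification of the actual Kan-on minimal-speed profile $(\Phi_0,\Psi_0)$, carefully cut off and perturbed near $-\infty$ using the exponential asymptotics of $\Psi_0$ (Lemma \ref{(3.1)s_0}), with a case split according to whether $s_0>2\sqrt{rd(1-b)}$ or $s_0=2\sqrt{rd(1-b)}$. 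That construction, or some substitute reaching all the way up to $s_0$, is indispensable and missing from your argument. (The remainder of the paper's existence proof, Lemma \ref{existence}, then uses these barriers together with a monotone parabolic flow rather than a bounded-domain iteration, but that choice is a matter of taste.)

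A second, smaller issue: your nonexistence step asserts that sliding a hypothetical semi-wave at speed $s\ge s_0$ against $(\Phi_{s_0},\Psi_{s_0})$ ``would yield a nontrivial full traveling wave at some speed $<s_0$.'' This is not what a sliding argument gives, and comparing solutions at two different speeds is problematic because the decay rates at $+\infty$ (which control whether the initial translate orders the profiles) depend on $s$. The paper instead compares the hypothetical semi-wave with the Kan-on wave at the \emph{same} speed $s$ (which exists since $s\ge s_0$), matches the $+\infty$ asymptotics computed in Step~2 of Lemma \ref{uniqueness}, and slides the full wave to $-\infty$ to conclude $\hat\psi\le 0$, a contradiction; no new wave is produced. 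Finally, in (i) the strict inequalities between $(\vp_{s_1},\psi_{s_1})$ and $(\vp_{s_2},\psi_{s_2})$ do not follow from a sliding argument, since both profiles are pinned at $\psi(0)=0$ and there is no free translation parameter; one should note that $(\vp_{s_2},\psi_{s_2})$ is a strict super/sub pair for the speed-$s_1$ problem (using $\vp_{s_2}'<0$, $\psi_{s_2}'>0$) and compare through the monotone flow, as the paper does in Lemma \ref{lem-s1-s2}.
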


For each $s\in [0, s_0)$, the solution pair $(\vp_s,\psi_s)$ in Theorem \ref{th3.1} generates a  traveling wave 
\[
(\tilde u(t,x), \tilde v(t,x)):=(\psi_s(st-x), \vp_s(st-x)),
\]
which satisfies
\begin{align}
\left\{
\begin{array}{lll}
\tilde u_{t}-d \tilde u_{xx}=r\tilde u(1- \tilde u-b\tilde v),\; & t>0, \ -\infty< x<st,  \\
\tilde v_{t}- \tilde v_{xx}=\tilde v(1-\tilde v-a \tilde u),\; & t>0, \ x\in\R^1,  \\
\tilde u(t, x)=0,\quad & t>0,\ st\leq x<+\infty.
\end{array} \right.
\label{p2}
\end{align}
We note that when $s=s_\mu$, one has the extra identity
\[
s=-\mu \tilde u_x(t, st).
\]
We will call $(\vp_{s_\mu}, \psi_{s_\mu})$ the semi-wave associated to \eqref{p1}.

The methods developed in this paper can  be extended to treat the strong competition case (namely the case $\min\{a, b\}>1$),
which will be considered in a separate paper. Without the assumption \eqref{v_0}, the conclusion of Theorem \ref{main} need not be true; see Remark \ref{v_0-not} for details. Note also that the proof of Theorem \ref{main} provides estimates on the profile of $u(t,x)$ and $v(t,x)$ for large $t$, showing that they behave like a pair of semi-wave; see Remark \ref{remark2}.

It is interesting to note that,  the approach in this paper, namely making use of the semi-wave
 to determine the spreading speed of the free boundary model \eqref{p1}, is rather different from the approaches used for spreading governed by the Cauchy problem of two species reaction diffusion systems, where the spreading speed  is usually determined by very different methods. We list two examples below.

In \cite{LLW}, the spreading speed of an invasive species into the territory of an existing competitor, determined by the corresponding Cauchy problem of \eqref{p1} in one space dimension, is established by an approach  developed in \cite{WLL}
(see also references therein),
which does not depend on the existence of a corresponding traveling wave. 

In \cite{D}, the invasion of a predator to the territory of a prey species was investigated,
where the interaction and growth of the species are governed by a Holling-Tanner type predator-prey system of the form
\begin{equation}
\label{pp}
\left\{
\begin{array}{ll}
u_t-d\Delta u=u(1-u)-\Pi(u)v, & x\in\R^N, \; t>0,\\
v_t-\Delta v=rv\left(1-\frac vu\right), & x\in\R^N, \; t>0,\\
u(0,x)=u_0(x),\; v(0,x)=v_0(x), & x\in\R^N,
\end{array}
\right.
\end{equation}
with
\[
1\geq u_0(x)\geq \delta>0,\; 1\geq v_0(x)\geq 0 \; (\forall x\in\R^N).
\]
Moreover, as usual, it is assumed that $v_0$ has nonempty compact support. Under suitable conditions for the function $\Pi(u)$, it is shown in
\cite{D} that the predator species $v$ invades at the asymptotic speed $c^*=2\sqrt{r}$. In space dimension $N=1$,
more accurate estimate of the spreading speed is obtained, and the existence of an almost planar ``generalized transition wave'' (see \cite{BH}) is established. Again, these results are proved without knowing the existence of a corresponding traveling wave. Indeed, whether the generalized transition wave mentioned above is actually a traveling wave is still
an open problem.

The rest of this paper is organized as follows. In Section 2, we prove Theorem \ref{th3.1},
which relies on Proposition \ref{kan-on}, some subtle constructions of comparison functions, and
a sliding method. Section 3 gives the proof of Theorem \ref{main}, based on Theorem \ref{th3.1}
and suitable comparison arguments.

\section{Semi-wave solutions}
\setcounter{equation}{0}
This section constitutes the proof  of Theorem \ref{th3.1}. We will accomplish this by a series of lemmas.
The first is  a comparison principle.

\begin{lem}{\rm(}Comparison principle{\rm)}\lbl{p3.1}\, Let $f, g\in C([0,+\infty))$  with $g$ nonnegative and not identically 0, and $s,d$ be given constants with $d>0$. Assume that $u_i\in C^2([0,\infty))$, and satisfies $u_i(x)>0$ in $(0,\infty)$, $u_1(0)\leq u_2(0)$ and
 \[su_1'-du_1''-u_1\big[f(x)-g(x)u_1\big]\leq 0\leq  su_2'-du_2''-u_2\big[f(x)-g(x)u_2\big] \ \ (\forall x>0).\]
If $\dd\limsup_{x\to\infty}\frac{u_1(x)}{u_2(x)}\leq 1$, then
  \[u_1(x)\leq u_2(x) \ \ ( \forall  x\geq 0).\]
 \end{lem}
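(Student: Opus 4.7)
My plan is a sliding method: I would compare $u_1$ with the one-parameter family $\tau u_2$ ($\tau\ge 1$) and slide $\tau$ down to $1$. The algebraic observation driving the argument is that the logistic nonlinearity makes $\tau u_2$ a \emph{strict} supersolution whenever $\tau>1$: a direct substitution in the inequality satisfied by $u_2$ yields
\[
s(\tau u_2)'-d(\tau u_2)''-(\tau u_2)\bigl[f-g(\tau u_2)\bigr]\;\ge\;\tau(\tau-1)\,g\,u_2^2,
\]
with the right-hand side strictly positive wherever $g>0$ and $u_2>0$.

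I would then set
\[
\tau^*:=\inf\bigl\{\tau\ge 1:u_1(x)\le\tau u_2(x)\ \text{for every }x\ge 0\bigr\},
\]
and first verify $\tau^*<\infty$. On compact subsets of $(0,\infty)$ the ratio $u_1/u_2$ is bounded by the positivity of $u_2$; at infinity it is controlled by the hypothesis $\limsup u_1/u_2\le 1$; near $x=0$ the only nontrivial case is $u_2(0)=0$ (which forces $u_1(0)=0$), and there the Hopf lemma applied to $u_2$ gives $u_2'(0)>0$, so a Taylor expansion controls $u_1/u_2$. By continuity, $u_1\le\tau^*u_2$ on $[0,\infty)$.

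The heart of the proof is to show $\tau^*=1$. Supposing for contradiction $\tau^*>1$, I set $w:=\tau^*u_2-u_1\ge 0$. Subtracting the subsolution inequality for $u_1$ from the strict supersolution inequality for $\tau^*u_2$, and using the factorization $(\tau^*u_2)^2-u_1^2=w(\tau^*u_2+u_1)$, the function $w$ satisfies the linear differential inequality
\[
-dw''+sw'+\bigl[g(u_1+\tau^*u_2)-f\bigr]\,w\;\ge\;\tau^*(\tau^*-1)\,g\,u_2^2\;\ge\;0,
\]
whose right-hand side is not identically zero (since $g\not\equiv 0$, $u_2>0$ on $(0,\infty)$, and $\tau^*>1$). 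Two cases arise. If $w$ vanishes at some interior point of $(0,\infty)$, the strong maximum principle forces $w\equiv 0$, i.e., $u_1\equiv\tau^*u_2$; substituting back into the two original inequalities forces $g u_2^2\equiv 0$ on $(0,\infty)$, contradicting the hypotheses. Otherwise $w>0$ throughout $(0,\infty)$, and I would show $\inf_{x\ge 0}(w/u_2)>0$; then any $\delta\in(0,\inf(w/u_2))$ gives $u_1\le(\tau^*-\delta)u_2$ on $[0,\infty)$, contradicting the minimality of $\tau^*$.

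The main obstacle I anticipate is verifying $\inf(w/u_2)>0$ in the second case. At infinity, $w/u_2=\tau^*-u_1/u_2$ has $\liminf\ge\tau^*-1>0$; on compact subsets of $(0,\infty)$ the positivity of both $w$ and $u_2$ provides a uniform positive lower bound. The delicate endpoint is $x=0$ in the degenerate case $u_2(0)=0$ (where $w(0)=0$): I would apply the Hopf boundary-point lemma to $w$ for the linear inequality above (whose coefficients are bounded near $0$) to deduce $w'(0)>0$, combine this with $u_2'(0)>0$ already obtained from Hopf applied to $u_2$, and finally invoke L'Hopital's rule to conclude $\lim_{x\to 0^+}w(x)/u_2(x)=(\tau^*u_2'(0)-u_1'(0))/u_2'(0)>0$.
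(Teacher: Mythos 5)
Your proof is correct, and it takes a genuinely different route from the paper's. The paper exploits the same algebraic fact you do — that a multiple $\tau u_2$ with $\tau>1$ remains a supersolution — but only in the form $(1+\delta)u_2$, and then it disposes of the problem in three lines: apply an already-proved bounded-domain comparison lemma (Lemma 2.1 of \cite{DMa}) on $[0,L_\delta]$, where $L_\delta$ is chosen so that $u_1(L_\delta)<(1+\delta)u_2(L_\delta)$ (available from the $\limsup$ hypothesis) and $L_\delta\to\infty$ as $\delta\to 0$, and finally let $\delta\to 0$. You instead run a full sliding argument with $\tau^{*}=\inf\{\tau\ge 1: u_1\le\tau u_2\}$, invoking the strong maximum principle to rule out interior touching and the Hopf lemma (twice: for $u_2$ and then for $w=\tau^{*}u_2-u_1$) to handle the degenerate endpoint $u_2(0)=0$. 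The paper's version is shorter because it offloads the essential work to a black-box comparison lemma; yours is self-contained, at the cost of a case distinction at $x=0$ and a separate argument that $\inf(w/u_2)>0$, which the paper's limit-over-bounded-intervals device avoids entirely. All of your individual steps check out — in particular your differential inequality for $w$, the observation that $w\equiv 0$ would force $gu_2^2\equiv 0$, and the L'H\^opital argument at the endpoint — so the only caveat is presentational: when you apply Hopf (and the SMP) with a zeroth-order coefficient of indefinite sign, the justification is that $w$ vanishes at the relevant point so that the sign of $c$ is immaterial, and it would be worth spelling that out.
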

\begin{proof}
For any small constant $\delta>0$, $(1+\delta)u_2$ satisfies the same differential inequality as $u_2$, and we may apply Lemma 2.1 of \cite{DMa} over $[0, L_\delta]$
to deduce that $u_1(x)\leq (1+\delta)u_2(x)$ for $x\in [0, L_\delta]$,
where $L_\delta>0$ satisfies
\[
(1+\delta)u_2(L_\delta)>u_1(L_\delta),\; \lim_{\delta\to 0} L_\delta=+\infty.
\]
 The required inequality then follows by letting $\delta\to 0$. 
\end{proof}

To motivate our definition of $s^*$ below, we note that if
 $s\geq 0$ and $(\varphi,\psi)$ is a solution of (\ref{3.2}), then
 \bess
 &\dd F_1(\varphi, \psi)(\xi):=\frac{\varphi(\xi)[1-\varphi(\xi)-a\psi(\xi)]+\varphi''(\xi)}{\varphi'(\xi)}=s \ \ (\forall  \xi\in\mathbb{R}),&\\
 &\dd F_2(\varphi, \psi)(\xi):=\frac{r\psi(\xi)[1-\psi(\xi)-b\varphi(\xi)]+d\psi''(\xi)}{\psi'(\xi)}=s \ \ (\forall  \xi>0).&
\eess

We now use $K_1$ to denote the set of functions $\varphi\in C^2(\mathbb{R})$ satisfying
 \[
\mbox{ $\varphi(-\infty)=1$,  $\varphi'(\xi)<0 \; (\forall \xi\in\mathbb{R})$,  $\varphi(+\infty)= 0$,}
\]
and let $K_2$ denote the set of functions $\psi\in C(\mathbb{R})\cap C^2([0,\infty))$ such that 
\[
\mbox{
$\psi(\xi)\equiv 0 \; (\forall \xi\leq 0)$,  $\psi'(\xi)>0 \; (\forall \xi\geq 0)$,  $\psi(+\infty)= 1$.}
\]
 Define
 \[K=\{(\varphi,\psi):\, \varphi\in K_1,\,\psi\in K_2\}.\]
For $(\varphi,\psi)\in K$, we set
 \bess
 F(\varphi,\psi)=\min\left\{\inf_{\xi\in\mathbb{R}}
 F_1(\varphi, \psi)(\xi),\
 \inf_{\xi>0}F_2(\varphi, \psi)(\xi)\right\}.
 \eess

Clearly,  if $(\varphi,\psi)$ is a  solution of (\ref{3.2}) with $s\ge 0$, then $(\varphi,\psi)\in K$ and $F(\varphi,\psi)=s$. Therefore
 \[s\le s^*:=\sup_{(\varphi,\psi)\in K}F(\varphi,\psi).\]
We can thus conclude that, if $s^*$  is finite, then
  \eqref{3.2} has no  solution when $s>s^*$. From the definition of $s^*$, we can use a super- and sub-solution argument to show that \eqref{3.2} has a solution for every $s\in [0, s^*)$.
We will also show that $s^*=s_0$, and \eqref{3.2} has no solution for $s\geq s_0$. Throughout the remainder of this section, we will always assume
\[
a>1>b.
\]
We first prove
\begin{lem}\lbl{s*>} 
$s^*\geq s_0$.
\end{lem}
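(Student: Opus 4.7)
My aim is to exhibit, for every $\epsilon > 0$, a pair $(\varphi, \psi) \in K$ satisfying $F(\varphi, \psi) \geq s_0 - \epsilon$, which immediately yields $s^* \geq s_0$ from the definition of $s^*$. The natural building block is the Kan-on traveling wave $(\Phi_{s'}, \Psi_{s'})$ of \eqref{3.1} provided by Proposition \ref{kan-on} for every $s' \geq s_0$, which satisfies $F_1(\Phi_{s'}, \Psi_{s'}) \equiv F_2(\Phi_{s'}, \Psi_{s'}) \equiv s'$ on $\mathbb{R}$. The obstruction to using this pair as-is is that $\Psi_{s'} > 0$ on the whole line, whereas $K_2$ demands $\psi \equiv 0$ on $(-\infty, 0]$.

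The plan is to construct $(\varphi, \psi)$ in two pieces reflecting the split structure of $K$. On the left half $(-\infty, 0]$, where $\psi \equiv 0$ and only $F_1$ is tested, the inequality $F_1 \geq s$ reduces (using $\varphi' < 0$) to the scalar Fisher-KPP sub-solution inequality $\varphi'' - s\varphi' + \varphi(1 - \varphi) \leq 0$. I take the explicit ansatz
\[
\varphi(\xi) = 1 - \delta e^{\nu \xi}, \qquad \xi \leq 0,
\]
where $\nu = (s + \sqrt{s^2 + 4})/2$ is the positive root of $\nu^2 - s\nu - 1 = 0$ and $\delta \in (0,1)$; a direct computation then gives
\[
F_1(\varphi, 0)(\xi) = s + \frac{\delta}{\nu}\, e^{\nu \xi} \geq s \qquad (\forall\, \xi \leq 0),
\]
so this half is handled cleanly. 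On the right half $[0, \infty)$ I would glue in a translated and slightly modified version of the wave pair $(\Phi_{s'}(\cdot - L), \Psi_{s'}(\cdot - L))$ for large $L = L(\delta)$, with $\delta$ tied to $L$ via the value-matching condition $\Phi_{s'}(-L) = 1 - \delta$; $C^2$-regularity of $\varphi$ at the join is recovered by a small mollification around $\xi = 0$, whose effect on $F_1$ vanishes as the mollification window shrinks.

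The principal obstacle is the simultaneous control of $F_1$ and $F_2$ on $[0, \infty)$ once $\Psi_{s'}$ is forced to vanish at the origin. The naive rescaling $\psi(\xi) = (\Psi_{s'}(\xi - L) - \eta)/(1 - \eta)$ with $\eta = \Psi_{s'}(-L)$ introduces an $O(1)$ drop in $F_2$ near the origin whose size, by linearization of \eqref{3.1} at $(1, 0)$, does \emph{not} vanish as $L \to \infty$, and a similar issue arises for $F_1$. To address this I would define $\psi$ on a short transition window $[0, \xi_0]$ by a shooting argument — solving the full second-order $\psi$-ODE (with $\varphi$ already fixed) as an initial-value problem with $\psi(0) = 0$ and $\psi'(0) = \beta$ free, and selecting $\beta$ by continuity so that $(\psi, \psi')$ joins $(\Psi_{s'}, \Psi_{s'}')(\cdot - L)$ at $\xi = \xi_0$ — after which $\psi(\xi) = \Psi_{s'}(\xi - L)$ on $[\xi_0, \infty)$. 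On $[\xi_0, \infty)$ the traveling-wave identities give $F_1 = F_2 = s'$, while on the transition window $F_2 = s'$ holds by construction and $F_1 \geq s' - o(1)$ needs a separate estimate based on $C^2$-closeness of the shooting solution to the translated wave. Letting $L \to \infty$ and $s' \downarrow s_0$ should then deliver $F(\varphi, \psi)$ arbitrarily close to $s_0$, hence $s^* \geq s_0$. I expect the hardest step to be that $C^2$-closeness on the transition window, which should reduce to a perturbative ODE argument around the attracting branch of the traveling-wave manifold near the endpoint $(1,0)$, exploiting the positivity of the characteristic roots guaranteed by $s' \geq s_0 \geq 2\sqrt{rd(1-b)}$ from Proposition \ref{kan-on}.
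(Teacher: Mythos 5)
Your overall strategy --- perturb the Kan-on wave near the end $(\Phi,\Psi)(-\infty)=(1,0)$, kill $\Psi$ at a finite point, and let the residue in $F$ go to zero --- is exactly what the paper does, so the conceptual core is right. The explicit ansatz $\varphi=1-\delta e^{\nu\xi}$ with $\nu^2-s\nu-1=0$ on the left half-line is a nice, clean piece of the construction, and your computation $F_1(\varphi,0)=s+\frac{\delta}{\nu}e^{\nu\xi}\geq s$ there is correct.

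There is, however, a genuine gap at the heart of the right-half construction, in the shooting argument for $\psi$ on the transition window $[0,\xi_0]$. You want to solve the second-order $\psi$-ODE forward from $\psi(0)=0$, $\psi'(0)=\beta$, with $\beta$ as the \emph{only} free parameter, and then demand a $C^1$ match $(\psi(\xi_0),\psi'(\xi_0))=(\Psi_{s'}(\xi_0-L),\Psi_{s'}'(\xi_0-L))$ --- two scalar conditions. The image $\beta\mapsto(\psi_\beta(\xi_0),\psi'_\beta(\xi_0))$ is a curve in $\mathbb{R}^2$, and there is no reason it passes through the target point; generically it misses. Worse, if on the window the coefficient $\varphi$ were \emph{exactly} $\Phi_{s'}(\cdot-L)$, then uniqueness for the ODE would force any solution $C^1$-matching $\Psi_{s'}(\cdot-L)$ at $\xi_0$ to \emph{be} $\Psi_{s'}(\cdot-L)$, contradicting $\psi(0)=0$. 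So the match can only occur through the perturbation of $\varphi$ away from $\Phi_{s'}$, which you have not quantified. In short: the smooth-matching requirement is over-determined and you have no mechanism to satisfy it. The paper sidesteps exactly this by not insisting on a smooth match: it builds $\tilde\psi$ with a \emph{corner} at the gluing point, with the kink $\tilde\psi'(M_1-0)<\tilde\psi'(M_1+0)$ oriented so that the differential inequality still holds in the weak sense (and similarly for $\tilde\varphi$); it then regains smoothness and strict monotonicity by flowing the kinked pair for time $1$ under the corresponding parabolic problems. That trick --- one-sided inequalities with the right kink orientation, followed by parabolic smoothing --- is the missing idea in your sketch.

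Two further points you would need to address even if the matching were repaired. First, you acknowledge but do not close the $F_1\geq s'-o(1)$ estimate on the transition window; since $\psi<\Psi_{s'}(\cdot-L)$ there, the $\varphi$-equation sees a \emph{larger} source term $1-\varphi-a\psi$, so $\varphi=\Phi_{s'}(\cdot-L)$ becomes a strict \emph{sub}solution, pushing $F_1$ below $s'$; you must add a compensating perturbation $+\epsilon_1 p$ to $\varphi$ (as the paper does) and prove the gain beats the loss. Second, the decay rate of $\Psi_0$ near $-\infty$ differs between $s_0>2\sqrt{rd(1-b)}$ (pure exponential) and $s_0=2\sqrt{rd(1-b)}$ (possibly $|x|e^{\beta_1 x}$), and this changes which perturbation $q$ can drive $\Psi_0-\epsilon_2 q$ to zero in finite distance; the paper runs a separate construction in the degenerate case (using the one-dimensional semi-wave of the scalar KPP problem from \cite{BDK}), which your proposal does not account for.
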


Let $(\Phi_0,\Psi_0)$ be a solution of \eqref{3.1} with $s=s_0$.
The proof of Lemma \ref{s*>}  depends on an asymptotic analysis of $\Psi_0$.
 \begin{lem} \lbl{(3.1)s_0}
 $$ 
\lim_{x\to-\infty}\frac{\Psi_0'(x)}{\Psi_0(x)}= \beta_1:=\frac{s_0+\sqrt{s_0^2-4rd(1-b)}}{2d}.
$$
Moreover, as $x\to-\infty$, 
\begin{itemize}
\item[(i)] in the case $s_0>2\sqrt{rd(1-b)}$,
\[
\Psi_0(x)=c_0e^{\beta_1 x}(1+o(1)) \mbox{ for some $c_0>0$ },
\]
\item[(ii)] in the case $s_0=2\sqrt{rd(1-b)}$,
\[
\Psi_0(x)=c_0e^{\beta_1 x}(1+o(1)) \mbox{ or } \Psi_0(x)=c_0|x|e^{\beta_1 x}(1+o(1))\mbox{ for some $c_0>0$ }.
\]
\end{itemize}
 \end{lem}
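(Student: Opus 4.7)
The overall strategy is to treat the ODE satisfied by $\Psi_0$ near $-\infty$ as a second-order linear equation whose coefficients converge to constants, and then extract the exponential leading-order behaviour from standard perturbation theory, using the minimality of $s_0$ from Proposition \ref{kan-on} to eliminate the slower exponential mode.

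First I would show that both $\Psi_0(x)$ and $1-\Phi_0(x)$ tend to $0$ exponentially as $x\to-\infty$. Rewriting the equation for $\Psi_0$ as
\[
d\Psi_0''-s_0\Psi_0'+q(x)\Psi_0=0,\qquad q(x):=r\bigl(1-\Psi_0(x)-b\Phi_0(x)\bigr),
\]
one has $q(x)\to r(1-b)>0$. Constructing super-solutions of the form $e^{\gamma x}$ with $\gamma$ slightly below the smaller root $\beta_2$ of the limiting characteristic equation, a comparison argument modelled on Lemma \ref{p3.1} yields $0<\Psi_0(x)\le Ce^{\gamma x}$ for $x\ll 0$. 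A parallel treatment of the equation for $1-\Phi_0$, linearised at $\Phi_0=1$ with the now exponentially small source $a\Psi_0$, delivers $0\le 1-\Phi_0(x)\le Ce^{\gamma' x}$ for some $\gamma'>0$.

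With this decay in hand, the $\Psi_0$-equation takes the form
\[
d\Psi_0''-s_0\Psi_0'+r(1-b)\Psi_0=R(x),\qquad |R(x)|\le Ce^{2\min(\gamma,\gamma')\,x},
\]
a constant-coefficient linear ODE with an exponentially small forcing. Classical asymptotic theory for such perturbations (Coppel-type results, or direct variation-of-parameters combined with contraction on an exponentially weighted space) then produces constants $A,B$ such that, as $x\to-\infty$,
\[
\Psi_0(x)=Ae^{\beta_1 x}+Be^{\beta_2 x}+o(e^{\beta_1 x})\quad\text{if }s_0>2\sqrt{rd(1-b)},
\]
and $\Psi_0(x)=(A+B|x|)e^{\beta_1 x}+o(e^{\beta_1 x})$ in the critical case $s_0=2\sqrt{rd(1-b)}$ where $\beta_1=\beta_2$.

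The main obstacle is to eliminate $B\neq 0$ in the non-degenerate case. The plan is a contradiction argument leveraging the minimality of $s_0$ given by Proposition \ref{kan-on}: if $B>0$ then $\Psi_0(x)\sim Be^{\beta_2 x}$ decays at the slower rate $\beta_2$, and I would exploit the cooperative structure exposed in the proof of Proposition \ref{kan-on} (via the pair $(u',-v')$ in the transformed system) together with a suitable shift-and-truncation of $(\Phi_0,\Psi_0)$ to construct ordered super- and sub-solutions of \eqref{3.1} at some speed $s<s_0$; a monotone-iteration scheme then produces an actual solution of \eqref{3.1} at this smaller speed, contradicting the minimality of $s_0$. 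Once $B=0$ is established in case (i), and noting that in case (ii) both alternatives in the expansion already have logarithmic derivative tending to $\beta_1$, the claimed asymptotic forms and the limit $\Psi_0'/\Psi_0\to\beta_1$ follow by direct differentiation of the expansion.
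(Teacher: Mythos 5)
Your opening steps track the paper's argument closely: establish exponential decay of $\Psi_0$ and $1-\Phi_0$ at $-\infty$ (the paper does this via the stable/unstable manifold theorem for the saddle point $(1,0,0,0)$, you propose exponential barriers — both are fine), rewrite the $\Psi_0$-equation as a constant-coefficient linear ODE perturbed by an exponentially small term, and invoke classical ODE asymptotic theory (Coddington--Levinson Chapter 3, Theorem 8.1, or Hartman) to get $\Psi_0 = c_1\tilde u_1 + c_2\tilde u_2$ with $\tilde u_i \sim u_i$ at $-\infty$. Up to that point, the two approaches are essentially the same.

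The crux is eliminating the slowly decaying mode $e^{\beta_2 x}$ (your $B\neq 0$, the paper's $c_2\neq 0$) when $s_0 > 2\sqrt{rd(1-b)}$, and here your proposal has a genuine gap. The paper does \emph{not} re-derive this: it cites the bound $\limsup_{x\to-\infty}\Psi_0(x)e^{-\beta_1 x}<\infty$, which Kan-on already established on page 163 of \cite{Kan} as part of the proof of Theorem 2.1 there, and this immediately forces $c_2=0$. You instead propose to argue by contradiction: if the slower rate $\beta_2$ were present, build super/sub-solutions of \eqref{3.1} at some speed $s<s_0$ and conclude a wave exists below $s_0$, contradicting minimality. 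But the sign of the decay-rate comparison goes the wrong way for that plan. At speed $s$, exponential super-solutions for the linearized $\Psi$-equation must decay at rates in $[\beta_-(s),\beta_+(s)]$ where $\beta_\pm(s)=\frac{s\pm\sqrt{s^2-4rd(1-b)}}{2d}$; as $s$ decreases from $s_0$, $\beta_-(s)$ \emph{increases}. Thus a profile decaying like $e^{\beta_2 x}=e^{\beta_-(s_0)x}$ decays \emph{too slowly} to serve as a super-solution at any speed $s<s_0$, and you would not be able to sandwich a wave at a smaller speed this way. Your sketch gives no alternative mechanism, and I do not see one that avoids re-proving the Kan-on estimate. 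In effect you are attempting to reprove a nontrivial structural fact about minimal-speed pushed fronts that the paper, correctly, simply quotes.

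The degenerate case $s_0=2\sqrt{rd(1-b)}$ (where $\beta_1=\beta_2$) is handled correctly in your write-up: both admissible asymptotic forms have logarithmic derivative tending to $\beta_1$, so no elimination step is needed there, matching the paper. Also note, as the paper does explicitly, that positivity of $\Psi_0$ together with $\beta_1\geq\beta_2>0$ is what forces the leading coefficient in the surviving mode to be positive; you should state this rather than leave it implicit.
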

\begin{proof}
A simple calculation indicates that the ODE system satisfied by $(\Phi_0, \Phi_0', \Psi_0, \Psi_0')$ has $(1,0,0,0)$ as a critical point, which is a saddle point. It follows from standard theory on stable and unstable manifolds 
(see, e.g., Theorem 4.1 and its proof in Chapter 13 of \cite{CL}), that $1-\Phi_0(x)$ and $\Psi_0(x)$
converge to 0 exponentially as $x\to-\infty$.

  We may rewrite the equation satisfied by $\Psi_0$ in the form
\bes\lbl{Psi0}
d\Psi_0''-s_0\Psi_0'+r(1-b)\Psi_0+\epsilon(x)\Psi_0=0,
\ees
with
\[
\epsilon(x):=rb(1-\Phi_0(x))-r\Psi_0(x)\to 0 \mbox{ exponentially as } x\to-\infty.
\]
Clearly
\[ u_1:=e^{\beta_1 x},\; u_2:=\left\{\begin{array}{ll} e^{\beta_2 x}, & s_0>2\sqrt{rd(1-b)},\\[1mm]
-xe^{\beta_1 x}, & s_0=2\sqrt{rd(1-b)},
\end{array}\right.
\]
are linearly independent solutions of the linear equation
\[
du''-s_0u'+r(1-b)u=0,
\]
where 
\[
 \beta_2:=\frac{s_0-\sqrt{s_0^2-4rd(1-b)}}{2d}\leq \beta_1.
\]

We are now in a position to apply  to  \eqref{Psi0} Theorem 8.1 in Chapter 3 of \cite{CL} (for the case $\beta_1\not=\beta_2$), or a variant of this result (see Question 35 in Chapter 3 of \cite{CL} or Theorem 13.1 in Chapter X of \cite{H}, for the case $\beta_1=\beta_2$), 
to conclude that \eqref{Psi0} has two linearly independent solutions ${\tilde u_i}$, $i=1,2$, satisfying
\[
{\tilde u_i}(x)=(1+o(1)){u_i}(x), \; \tilde u_i'(x)=(1+o(1))u_i'(x)\; \mbox{ as } x\to-\infty,\; i=1,2.
\]
Since $\Psi_0$ solves \eqref{Psi0}, there exist constants $c_1$ and $c_2$ such that
\[
\Psi_0(x)=c_1\tilde u_1(x)+c_2\tilde u_2(x).
\]
From $\Psi_0(x)>0$ and $\beta_1\geq \beta_2>0$ we deduce that
either 
\begin{center}
(i) $c_2>0$ or (ii) $c_2=0$ and $c_1>0$.
\end{center}

If $s_0>2\sqrt{rd(1-b)}$, according to the proof of Theorem 2.1 in \cite{Kan} (see page 163 there), we have 
$$
\limsup _{x\rightarrow -\infty}\{\Psi_0(x)e^{-\beta_1x}\}<+\infty,
$$
which implies that, in such a case, $c_2=0$ and $c_1>0$. We can thus conclude that, 
in the case $ s_0>2\sqrt{rd(1-b)}$, there exists $c_0>0$ such that
\bes\lbl{beta1>beta2}
\Psi_0(x)=c_0u_1(x)(1+o(1)),\;  \Psi_0'(x)=c_0u_1'(x)(1+o(1)) \; \mbox{ as } x\to-\infty,
\ees
and  in the case $ s_0=2\sqrt{rd(1-b)}$, there exists $c_0>0$ so that, as $x\to-\infty$,
\bes\lbl{beta1=beta2-1}
\Psi_0(x)=c_0 u_1(x)(1+o(1)),\; \Psi_0'(x)=c_0 u_1'(x)(1+o(1)),\;\; \mbox{ or}
\ees
\bes\lbl{beta1=beta2-2}
 \Psi_0(x)=c_0 u_2(x)(1+o(1)),\; \Psi_0'(x)=c_0 u_2'(x)(1+o(1)).
\ees
The conclusions of the lemma are direct consequences of \eqref{beta1>beta2}, \eqref{beta1=beta2-1} and \eqref{beta1=beta2-2}.
\end{proof}

{\bf Proof of Lemma \ref{s*>}:}  For  arbitrarily small $\epsilon>0$, we are going to construct a function pair $(\bar{\varphi},\underline{\psi})\in K$ such that
\bes\lbl{phi-psi}
 \left\{\begin{array}{lll}
(s_0-\epsilon)\bar{\varphi}'-\bar{\varphi}''\geq\bar{\varphi}(1-\bar{\varphi}-a\underline{\psi}), & \xi\in \mathbb{R},\\[1mm]
(s_0-\epsilon)\underline{\psi}'-d\underline{\psi}''\leq r\underline{\psi}(1-\underline{\psi}-b\bar{\varphi}), & \xi\in (0, \infty).
 \end{array}\right.\ees
Clearly this would imply $s^*\geq F(\bar\varphi,\underline\psi)\geq s_0-\epsilon$. Since $\epsilon>0$ is arbitrary, the required estimate $s^*\geq s_0$ then follows.

Fix  $\epsilon\in (0, rb/\beta_1)$, where, as before,  $\beta_1=\frac{s_0+\sqrt{s_0^2-4rd(1-b)}}{2d}$.
In view of the asymptotic behavior  of $(\Phi_0, \Psi_0)$ and Lemma \ref{(3.1)s_0}, there exists a constant $M_0<0$ such that
\bes\lbl{M0}
\frac{\Psi_0'(x)}{\Psi_0(x)}\geq \frac{1}{2}\beta_1,\; \max\big\{1-\Phi_0(x),\Psi_0(x)\big\}<\min\left\{\frac{\epsilon\beta_1}{4rb}, 1-b\right\} \; (\forall x \leq M_0).
\ees

{\bf Step 1:} 
We construct $(\tilde{\varphi},\tilde{\psi})$ that satisfies,  in the weak sense,
\bes\lbl{tilde-phi}
(s_0-\epsilon)\tilde{\varphi}'-\tilde{\varphi}''\geq\tilde{\varphi}(1-\tilde{\varphi}-a\tilde{\psi}) \mbox{ for } x\in \mathbb{R},
\ees
\bes\lbl{tilde-psi}
(s_0-\epsilon)\tilde\psi'-d\tilde\psi''\leq r\tilde\psi(1-\tilde\psi-b\tilde\varphi) \mbox{ for $x$ satisfying } \tilde\psi(x)>0,
\ees
 and moreover,
\[\mbox{
$\tilde \varphi(x)$ is nonincreasing, $\tilde \psi(x)$ is nondecreasing,}
\]
\[
\tilde\varphi(+\infty)=0,\; \tilde\psi(+\infty)=1,\; \tilde\varphi(-\infty)=1,\;  \tilde\psi(x)=0 \mbox{ for all large negative } x.
\]
 The required $(\bar\phi, \underline\psi)$ will be obtained in Step 2, by  
solving two natural parabolic problems with $\tilde \varphi$ and $\tilde \psi$ as initial functions, respectively.

Let $M_0$ be given in \eqref{M0}.
We temporarily define $\tilde\varphi:=\Phi_0+\epsilon_1p$, with $p(x)$ determined as follows.
\[
p(x):=e^{\frac{s_0-\sqrt{s_0^2+2}}{2}x} \mbox{ for } x\leq M_0-1;\;\; p(x):=0 \mbox{ for } x\geq M_0,
\]
and for $x\in (M_0-1, M_0)$, $p(x)>0$, $p'(x)\leq 0$. Moreover, $p(x)$ is $C^2$ everywhere. The positive constant $\epsilon_1$ will be determined below. Later on, we will modify $\tilde\varphi(x)$ for large negative $x$.

We now calculate
\bes\lbl{tilde-phi-1}
\begin{array}{l}
(s_0-\epsilon)\tilde\varphi'-\tilde \varphi''-\tilde\varphi (1-\tilde\varphi-a\Psi_0)\\[1mm]
=-\epsilon\Phi_0'+\epsilon_1\big[(s_0-\epsilon) p'-p''+p(-1+2\Phi_0+\epsilon_1p+a\Psi_0)\big].
\end{array}
\ees
Hence we can fix $\epsilon_1>0$ sufficiently small so that, for $x\in [M_0-1, M_0]$,
\bes\lbl{[M0-1,M0]}
(s_0-\epsilon)\tilde\varphi'-\tilde \varphi''-\tilde\varphi (1-\tilde\varphi-a\Psi_0)>0
\ees
and
\[
\tilde \varphi'(x)=\Phi_0'(x)+\epsilon_1p'(x)<0,\; \; \tilde\varphi(x)=\Phi_0(x)+\epsilon_1 p(x)<1.
\]
By the definition of $p(x)$ for $x\leq M_0-1$, clearly $\tilde \varphi'(x)<0$ for $x\leq M_0-1$, and
$\tilde\varphi (x)\to+\infty$ as $x\to-\infty$.  Hence there exists a unique  constant $M_1<M_0-1$ such that 
\[
\tilde \varphi(M_1)=\Phi_0(M_1)+\epsilon_1 p(M_1)=1.
\]
In view of \eqref{M0}, we have
\bes\lbl{phi-p}
 \epsilon_1 p(M_1)=1-\varphi(M_1)<\frac{\epsilon\beta_1}{4rb} \mbox{ and hence }   \epsilon_1 p(x)< \frac{\epsilon\beta_1}{4rb} \mbox{ for } x\in [M_1, M_0].
\ees
Let $\epsilon_1$ and $M_1$ be chosen as above. 
We define
\bes \tilde{\varphi}(x):=
 \left\{\begin{array}{lll}
 \Phi_0(x)+\epsilon_1p(x), & x\geq M_1,\\
 1, & x<M_1.
  \end{array}\right.\ees
Clearly $0<\tilde \psi(x)\leq 1$ for all $x$.
We also define 
\[
\tilde\psi(x)=\Psi_0(x) \mbox{ for }x\geq M_1,
\]
and suppose $\tilde \psi(x)\geq 0$ for $x<M_1$ (with the exact definition of $\tilde \psi$ in this range to be specified later).

We show next that $(\tilde\varphi,\tilde \psi)$ satisfies  \eqref{tilde-phi}. For $x\geq M_0$,
this is obvious, since $(\tilde\varphi,\tilde\psi)=(\Phi_0,\Psi_0)$ in this range. For $x\in [M_0-1, M_0]$,
this has been proved in \eqref{[M0-1,M0]}. For $x<M_1$, it also holds trivially since $\tilde\psi\geq 0$. We now examine it for $x\in [M_1, M_0-1]$. Firstly, we note that $\tilde\varphi$ is $C^2$ except a jumping discountinuity of $\tilde\varphi'(x)$ at $x=M_1$, where we have
\[
\tilde\varphi'(M_1-0)=0> \tilde\varphi'(M_1+0),
\]
which is the right inequality for the required differential inequality in the weak sense. For $x\in (M_1, M_0-1)$,
by the choice of $M_0$, we have $\Phi_0(x)>1-\frac{\epsilon\beta_1}{4rb}>\frac34$.
Hence, for such $x$,
\[
(s_0-\epsilon) p'-p''+p(-1+2\Phi_0+\epsilon_1p+a\Psi_0)>s_0p'-p''+\frac12 p=0.
\]
Therefore we can apply \eqref{tilde-phi-1} to deduce
\[
(s_0-\epsilon)\tilde\varphi'-\tilde \varphi''-\tilde\varphi (1-\tilde\varphi-a\Psi_0)>0 \mbox{ for } x\in (M_1, M_0-1].
\]

We have thus varified that $(\tilde\varphi,\tilde\psi)$ satisfies  (in the weak sense)  \eqref{tilde-phi}.
Moreover, from the definition, we also see that $\tilde\varphi$ is nonincreasing.

Next, we show that $\tilde\psi(x)$ can be suitably defined for $x<M_1$ such that \eqref{tilde-psi} is satisfied.
For $x\geq M_0$, this inequality follows from the fact that $(\tilde\varphi,\tilde\psi)=(\Phi_0,\Psi_0)$. For $x\in (M_1, M_0]$, due to \eqref{phi-p} and \eqref{M0},
we have 
\[
-\frac\epsilon 2 \beta_1+rb\epsilon_1 p(x)\leq -\frac{\epsilon\beta_1}{4}<0
\]
and 
\[
\begin{array}{l}
(s_0-\epsilon)\Psi_0'-d\Psi_0''-r\Psi_0(1-\Psi_0-b\tilde\varphi)\medskip\\
\displaystyle =-\epsilon\Psi_0'+r\Psi_0 b\epsilon_1 p
\leq \left[-\frac\epsilon 2 \beta_1+rb\epsilon_1 p(x)\right]\Psi_0<0.
\end{array}
\]
Thus  \eqref{tilde-psi} is satisfied by $(\tilde\varphi,\tilde\psi)$ for $x>M_1$.

Next we define $\tilde\psi(x)$ for $x<M_1$ so that \eqref{tilde-psi} is satisfied by $(\tilde\varphi,\tilde\psi)$
for $x\leq M_1$. We will treat the cases $s_0>2\sqrt{rd(1-b)}$ and $s_0=2\sqrt{rd(1-b)}$
separately.

{\it Case 1}: $s_0>2\sqrt{rd(1-b)}$. 

 In this case, we assume further that $\epsilon>0$ is sufficiently small so that
\[
\beta_\epsilon:=\frac{(s_0-\epsilon)+\sqrt{(s_0-\epsilon)^2-4dr(1-b)}}{2d} >0.
\]
We are going to choose a constant $\epsilon_2>0$ and a function $q(x)$ such that 
\[
\tilde\psi(x):=\Psi_0(x)-\epsilon_2 q(x)
\]
 meets all the requirements. 

We define 
\[
\mbox{$q(x):=0$ for $x\geq M_1$}, \;
q(x):=e^{\beta_\epsilon x} \mbox{ for }
x\in (-\infty, M_1-1),
\]
and for $x\in [M_1-1, M_1]$,  we define $q(x)$ so that $q(x)>0$, and $q(x)$ is $C^2$ everywhere.

Since $\beta_\epsilon<\beta_1$, by Lemma \ref{(3.1)s_0} we can find $M_\epsilon<M_1-1$ such that
\[
\Psi_0'(x)>\beta_\epsilon \Psi_0(x) \mbox{ for } x\leq M_\epsilon.
\]
It follows that, for $x\leq M_\epsilon$,
\bes\lbl{tilde-psi'}
\tilde\psi'(x)=\Psi_0'(x)-\epsilon_2q'(x)>\beta_\epsilon \Psi_0(x)-\epsilon_2 \beta_\epsilon q(x)=\beta_\epsilon \tilde \psi(x).
\ees
We now fix $\epsilon_2$ sufficiently small such that, for $x\in [M_\epsilon, M_1]$,
\[
\tilde\psi(x)>0,\; \tilde\psi'(x)>0,
\]
and
\[
\begin{array}{l}
(s_0-\epsilon)\tilde\psi'-d\tilde\psi''-r\tilde\psi(1-\tilde \psi-b\tilde\varphi)\\[1mm]
=rb\Psi_0(1-\Phi_0)-\epsilon \Psi_0'+\epsilon_2\big[(\epsilon-s_0)q'+dq''+rq(1-2\Psi_0+\epsilon_2q-b)\big]\\[1mm]
\leq -\frac{\epsilon}{4}\beta_1\Psi_0+\epsilon_2\big[(\epsilon-s_0)q'+dq''+rq(1-b)\big]<0.
\end{array}
\]
Here in deriving the second last inequality, we have used \eqref{M0} and
\[
 -2\Psi_0+\epsilon_2q<-\tilde\psi<0.
\]
Thus \eqref{tilde-psi} is satisfied by $(\tilde \varphi,\tilde\psi)$ for $x\geq M_\epsilon$.

Due to $\beta_\epsilon<\beta_1$, from Lemma \ref{(3.1)s_0}  we easily deduce
\[
\lim_{x\to-\infty}\frac{\Psi_0(x)}{e^{\beta_\epsilon x}}=0.
\]
It follows that 
\[
\tilde\psi(x)=e^{\beta_\epsilon x}\left[\frac{\Psi_0(x)}{e^{\beta_\epsilon x}}-\epsilon_2\right]<0
\mbox{ for all large negative } x.
\]
Since $\tilde\psi(M_\epsilon)>0$, by continuity there exists $M_2<M_\epsilon$ such that
\[
\tilde\psi(M_2)=0,\;\; \tilde\psi(x)>0 \mbox{ for } x\in (M_2, M_\epsilon],
\]
which implies
$-2\psi_0+\epsilon_2q<-2\tilde\psi<0$ for such $x$.
By the definition of $q(x)$, we have
\[
(\epsilon-s_0)q'+dq''+rq(1-b)=0 \mbox{ for } x<M_1-1.
\]
Thus for $x\in (M_2, M_\epsilon]$, we have
\[
\begin{array}{l}
(s_0-\epsilon)\tilde\psi'-d\tilde\psi''-r\tilde\psi(1-\tilde \psi-b\tilde\varphi)\\[1mm]
\leq -\frac{\epsilon}{4}\beta_1\Psi_0+\epsilon_2\big[(\epsilon-s_0)q'+dq''+rq(1-b)\big]\\[1mm]
= -\frac{\epsilon}{4}\beta_1\Psi_0<0.
\end{array}
\]
We have thus proved that  \eqref{tilde-psi} is satisfied by $(\tilde \varphi,\tilde\psi)$ for $x>M_2$. Moreover,
\[ \tilde\psi(x)>0 \mbox{ for } x>M_2, \; \tilde \psi(M_2)=0.\]
By the choice of $\epsilon_2$ and the definition of $\tilde\psi$, we already know that $\tilde\psi'(x)>0$ for
$x\geq M_\epsilon$.
By \eqref{tilde-psi'}, we deduce  $\tilde\psi'(x)>0$ for $x\in (M_2, M_\epsilon]$. Hence 
\[
\tilde\psi'(x)>0 \mbox{ for } x>M_2.
\]
We may now define
\[
\tilde\psi(x)=0 \mbox{ for } x\leq M_2,
\]
and conclude that $(\tilde\varphi, \tilde\psi)$ meets all the requirements of Step 1.

{\it Case 2}: $s_0=2\sqrt{rd(1-b)}$.

In this case we have $s_0-\epsilon<2\sqrt{dr(1-b)}$, and hence we can use Proposition 2.1 of \cite{BDK}
to see that the problem
\[
(s_0-\epsilon)\psi'-d\psi''=r\psi(1-b-\psi) \; \mbox{ for } x>0,\; \psi(0)=0
\]
has a unique positive solution, and it satisfies
\[
\psi(+\infty)=1-b,\; \psi'(+\infty)=0.
\]
(Note that a simple change of variables can transform the above problem to the form considered in \cite{BDK}.)
By \eqref{M0}, we have 
\[
\Psi_0(M_1)<1-b.
\]
Hence we can find a large positive constant $M^0$ such that
\[
\psi(M^0)>\Psi_0(M_1),\; \psi'(M^0)<\Psi_0'(M_1).
\]
Set
\[
m:=\frac{\Psi_0( M_1)}{\psi(M^0)},\;\; M_2:=M_1-M^0,\;\;  \tilde\psi(x)=\left\{\begin{array}{ll} \Psi_0(x),& x\geq M_1,\\
m\psi(x-M_2), & M_2<x<M_1,\\
0, & x\leq M_2.
\end{array}\right.
\]
Clearly $\tilde\psi$ is continuous in $\mathbb{R}$, and
\[
\tilde\psi'(x)>0 \mbox{ for } x\in (M_2, M_1)\cup(M_1, +\infty),\; \tilde\psi'(M_1-0)<\tilde\psi'(M_1+0).
\]
Moreover, for $x\in (M_2, M_1)$, due to $0<m<1$,
we have
\[
(s_0-\epsilon)\tilde\psi'-d\tilde\psi''\leq r\tilde\psi(1-\tilde\psi-b).
\]
Therefore  \eqref{tilde-psi} is satisfied (in the weak sense) by $(\tilde\varphi, \tilde\psi)$ for $x>M_2$.
So in case 2, we have also constructed $(\tilde\varphi,\tilde\psi)$ that meets all the requirements of Step 1.

{\bf Step 2}: Definition of
 $(\bar{\varphi},\underline{\psi})$ and completion of the proof.

Set
\[
\tilde\varphi_0(x)=\tilde\varphi(x+M_2),\;\; \tilde\psi_0(x)=\tilde\psi(x+M_2).
\]
Then consider the following auxiliary problems:

\bes\lbl{para-phi}
 \left\{\begin{array}{lll}
\varphi_t+(s_0-\epsilon)\varphi_x-\varphi_{xx}=\varphi(1-\varphi-a\tilde\psi_0(x)), & t>0,x\in \mathbb{R},\\[1mm]
\varphi(0,x)=\tilde\varphi_0(x),& x\in \mathbb{R},
\end{array}\right.
\ees
\bes\lbl{para-psi}
 \left\{\begin{array}{lll}
\psi_t+(s_0-\epsilon)\psi_x-d\psi_{xx}= r\psi(1-\psi-b\tilde\varphi_0(x)), & t>0, x>0,\\[1mm]
\psi(t,0)=0, & t>0,\\[1mm]
\psi(0,x)=\tilde\psi_0(x),& x\geq 0.
 \end{array}\right.
\ees

From Step 1, we see that $\tilde\varphi_0$ is a strict supersolution of the corresponding elliptic problem of \eqref{para-phi}.
It follows that  $\varphi_t<0$ for $x\in \mathbb{R}$ and $t>0$. Moreover, due to the monotonicity of $\tilde\varphi_0$ and $\tilde\psi_0$,  one may use the strong maximum principle to the equation for
$\varphi_x(t, x)$ to deduce that $\varphi_x(t,x)<0$ for every $t>0$ and $x\in\mathbb R$.

Similarly, making use of \eqref{para-psi} we obtain $\psi_t>0$ for $x>0$ and $t>0$, and $\psi_x(t,x)>0$ for every $t>0$ and $x\geq 0$.

Define
\[
\overline \varphi(x):=\varphi(1,x),\; \underline\psi(x):=\psi(1,x).
\]
Then we have 
\[
\overline \varphi(x)<\tilde\varphi_0(x),\;\; \underline \psi(x)>\tilde\psi_0(x),
\]
and
\[
\varphi_t(1,x)<0,\; \; \ \psi_t(1,x)>0.
\]
Let us  also note that since $1>\tilde\varphi_0>0$ in $\mathbb R$ and $1>\tilde\psi_0>0$ in $(0, \infty)$, for every $t>0$, we have
\[
\varphi(t,x)>0 \mbox{ in } \mathbb R,\;\; 1>\psi(t,x)>0 \mbox{ in } (0,\infty).
\]
Hence it follows from \eqref{para-phi} and \eqref{para-psi}  that
$(\overline \varphi,\underline \psi)$ satisfies \eqref{phi-psi}. Moreover, due to $\tilde\varphi_0(+\infty)=0$
and $\tilde\psi_0(+\infty)=1$, we further deduce
\[
\overline\varphi(+\infty)=0,\; \underline\psi(+\infty)=1.
\]
Finally we show that 
\[
\overline\varphi(-\infty)=1.
\]
Let $u_0(x)$ be the unique positive solution to
\[
-u''=u(1-u) \mbox{ for } x<0,\; \; u(0)=0.
\]
Then $u_0(-\infty)=1$ and $u_0'(x)<0$ for $x\leq 0$. Define
\[
\tilde u_0(x)=\left\{\begin{array}{ll} u_0(x),& x\leq 0,\\
0,& x>0.
\end{array}\right.
\]
Since $\tilde\varphi_0(x)=1$  and $\tilde\psi_0(x)=0$ for $x\leq 0$, we easily see that $\tilde u_0$ is a subsolution of the corresponding elliptic problem of \eqref{para-phi}. It then follows from $\tilde u_0\leq \tilde\varphi_0$ that $\varphi(t,x)\geq \tilde u_0(x)$ for all $t>0$ and $x\in\mathbb R$. Hence we must have $\varphi(t,-\infty)=1$
for all $t>0$. In particular, $\overline\varphi(-\infty)=\varphi(1,-\infty)=1$. Therefore $(\overline\varphi,\underline\psi)\in K$.
This completes the proof. \hfill \fbox

\begin{lem}\lbl{existence}
Problem \eqref{3.2} has a solution for every $s\in[0,s^*)$.
\end{lem}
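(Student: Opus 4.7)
The plan is to carry the parabolic construction used at the end of the proof of Lemma \ref{s*>} to its stationary limit, converting the near-optimal super/sub pair supplied by the definition of $s^*$ into a genuine solution of \eqref{3.2}.

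Fix $s\in[0,s^*)$. By the definition of $s^*$, pick $(\bar{\varphi}_0,\underline{\psi}_0)\in K$ with $F(\bar{\varphi}_0,\underline{\psi}_0)>s$. Unwrapping $F_1$ and $F_2$, and using $\bar{\varphi}_0'<0$, $\underline{\psi}_0'>0$, this becomes the pair of strict super/sub inequalities
\bess
s\bar{\varphi}_0' - \bar{\varphi}_0'' &>& \bar{\varphi}_0(1-\bar{\varphi}_0-a\underline{\psi}_0),\quad \xi\in\mathbb{R},\\
s\underline{\psi}_0' - d\underline{\psi}_0'' &<& r\underline{\psi}_0(1-\underline{\psi}_0-b\bar{\varphi}_0),\quad \xi>0,
\eess
and membership in $K$ forces $0<\bar{\varphi}_0<1$ on $\mathbb{R}$ and $0\le\underline{\psi}_0<1$ on $[0,\infty)$. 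Now run the coupled parabolic flow
\bess
\varphi_t + s\varphi_\xi - \varphi_{\xi\xi} &=& \varphi(1-\varphi-a\psi),\quad t>0,\;\xi\in\mathbb{R},\\
\psi_t + s\psi_\xi - d\psi_{\xi\xi} &=& r\psi(1-\psi-b\varphi),\quad t>0,\;\xi>0,
\eess
with $\psi(t,0)=0$, $\psi\equiv 0$ for $\xi\le 0$, and initial data $(\bar{\varphi}_0,\underline{\psi}_0)$. A global classical solution exists by standard theory and stays in $[0,1]^2$.

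Differentiating in $t$, the pair $(-\varphi_t,\psi_t)$ satisfies a linear cooperative system whose initial values are strictly positive by the super/sub inequalities above; the strong maximum principle for cooperative systems then gives $\varphi_t<0$ and $\psi_t>0$ for all $t>0$. Hence the monotone pointwise limits $\varphi_s(\xi):=\lim_{t\to\infty}\varphi(t,\xi)$ and $\psi_s(\xi):=\lim_{t\to\infty}\psi(t,\xi)$ exist, and interior Schauder estimates upgrade this to $C^2_{loc}$ convergence, so $(\varphi_s,\psi_s)$ solves the stationary system of \eqref{3.2}. Applying the same cooperative argument to $(-\varphi_\xi,\psi_\xi)$ (whose initial data $-\bar{\varphi}_0'>0$, $\underline{\psi}_0'>0$ are strictly positive because $(\bar{\varphi}_0,\underline{\psi}_0)\in K$), together with Hopf's lemma at $\xi=0$, yields the strict monotonicity $\varphi_s'<0$ on $\mathbb{R}$ and $\psi_s'>0$ on $[0,\infty)$.

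It remains to check the boundary values at $\pm\infty$. The identity $\psi_s\equiv 0$ on $\xi\le 0$ is inherited from the flow. Since $\varphi_s\le\bar{\varphi}_0$ and $\bar{\varphi}_0(+\infty)=0$ we get $\varphi_s(+\infty)=0$; since $\psi_s\ge\underline{\psi}_0$ we get $\liminf_{\xi\to+\infty}\psi_s\ge 1$, while Lemma \ref{p3.1} applied to $\psi_s$ and the constant $1$ on $[0,\infty)$ supplies the matching upper bound, so $\psi_s(+\infty)=1$. For $\varphi_s(-\infty)=1$, note that $\psi\equiv 0$ throughout the flow on $\xi\le 0$, so $\varphi$ there satisfies the scalar logistic equation $\varphi_t+s\varphi_\xi-\varphi_{\xi\xi}=\varphi(1-\varphi)$; a scaled logistic sub-solution $\kappa\tilde u_0$ (with $\tilde u_0$ as in the closing argument of the proof of Lemma \ref{s*>}, and $\kappa>0$ small enough that $\kappa\tilde u_0\le\bar{\varphi}_0$) can be placed below $\varphi(t,\cdot)$ uniformly in $t$, yielding $\varphi_s\ge\kappa\tilde u_0>0$ on $(-\infty,0)$. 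The monotone limit $L:=\varphi_s(-\infty)\in[\kappa,1]$ exists, and feeding $\varphi_s'(\xi),\varphi_s''(\xi)\to 0$ as $\xi\to-\infty$ into the ODE gives $L(1-L)=0$, forcing $L=1$. Hence $(\varphi_s,\psi_s)\in K$ solves \eqref{3.2}.

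The main obstacle is precisely the recovery of the boundary values at $\pm\infty$, since the monotone convergence above is only locally uniform and these values do not transfer for free. The most delicate case is $\varphi_s(-\infty)=1$, which becomes accessible only because $\psi$ is frozen at $0$ on the half-line $\xi\le 0$, decoupling the $\varphi$ equation there into a scalar Fisher--KPP problem admitting a uniform-in-$t$ scaled logistic sub-solution, after which an ODE analysis at $-\infty$ pins the limit to $1$. The other ingredients---global parabolic existence, the cooperative reformulation that drives the monotone dynamics, Schauder compactness, and Hopf-strengthened strict monotonicity of the limit---are then routine.
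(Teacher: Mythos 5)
Your argument is correct, but it routes through the parabolic flow from the opposite end to the paper. The paper builds two comparison pairs: the ``easy'' pair $(\underline{\varphi},\bar\psi)$ assembled from scalar logistic problems, and the near-optimal pair $(\bar\varphi,\underline\psi)$ coming from the definition of $s^*$; it then proves the ordering $\underline\varphi\le\bar\varphi$, $\underline\psi\le\bar\psi$ (its Step 2), starts the parabolic flow from $(\underline\varphi,\bar\psi)$, and uses $(\bar\varphi,\underline\psi)$ as a barrier. The monotone flow is thus squeezed between the two pairs, so the $t\to\infty$ limit inherits all four boundary values $\varphi_*(\pm\infty)$, $\psi_*(0)$, $\psi_*(+\infty)$ directly from the sandwich. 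You instead start the flow from the near-optimal pair itself; monotone decrease of $\varphi$ and increase of $\psi$ then gives $\varphi_s\le\bar\varphi_0$ and $\psi_s\ge\underline\psi_0$, which settles $\varphi_s(+\infty)=0$ and $\psi_s(+\infty)=1$ for free (together with the invariant region $[0,1]$), and the only nontrivial limit is $\varphi_s(-\infty)=1$, which you pin down with the scaled logistic subsolution $\kappa\tilde u_0$ plus a translation/ODE argument at $-\infty$. So your route avoids the paper's ordering lemma entirely at the cost of the asymptotic ODE analysis on the $\varphi$-equation; the paper's route is longer but produces, as a by-product, the two-sided bracket $\underline\varphi\le\varphi_*\le\bar\varphi$, $\underline\psi\le\psi_*\le\bar\psi$ that it reuses verbatim in Step 1 of the uniqueness proof (Lemma \ref{uniqueness}).

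Two small points of hygiene, neither fatal. First, the claimed strict parabolic inequality $\psi_t>0$ does not hold at $\xi=0$ (there $\psi_t\equiv 0$); but of course on $\xi>0$ it holds, which is all you use. Second, in passing from the weak monotonicity $\varphi_s'\le 0$, $\psi_s'\ge 0$ of the limit to the strict version, the cleanest justification is the one the paper uses: apply the strong maximum principle and Hopf's lemma to the stationary cooperative system satisfied by $(-\varphi_s',\psi_s')$ (which avoids the awkwardness of a parabolic system in which $\psi_\xi$ has no prescribed boundary value at $\xi=0$); your phrase ``the same cooperative argument'' is best read that way. Finally, your invocation of Lemma \ref{p3.1} to supply $\psi_s\le 1$ is superfluous, since you already have the parabolic invariant region $\psi\le 1$ (the constant $1$ is a supersolution and $\underline\psi_0<1$).
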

\begin{proof}
 Taking advantage of the order-preserving property of \eqref{3.2}, we will first construct  super- and subsolutions of \eqref{3.2}, and then use  them to obtain a solution.

{\bf Step 1}.  Construction of $(\underline{\varphi}, \bar\psi)$.

Let $\underline{\varphi}(\xi)$ and $\bar{\psi}(\xi)$ be the unique positive solutions of
 \[-\varphi''=\varphi(1-\varphi) \ (\forall \xi<0), \quad \varphi(0)=0\]
and
 \[-d\psi''=r\psi(1-\psi)\ (\forall \xi>0),\quad \psi(0)=0,\]
respectively. Then
 \[\underline{\varphi}'(\xi)<0 \ (\forall\ \xi\leq 0), \ \ \ \bar{\psi}'(\xi)>0 \ (\forall\ \xi\geq 0), \ \ \ \underline{\varphi}(-\infty)=\bar{\psi}(\infty)=1.\]

We extend $\underline{\varphi}(\xi)$ and $\bar{\psi}(\xi)$ by the value $0$ to $\xi>0$ and $\xi<0$, respectively. 
Since $\underline{\varphi}'(0^-)<0=\underline{\varphi}'(0^+)$, for each $s\geq 0$, we have (in the weak sense for $\underline\varphi$),
 \bess\left\{\begin{array}{ll}
 s\underline{\varphi}'-\underline{\varphi}''\leq \underline{\varphi}(1-\underline{\varphi}-a\bar{\psi}), \ \ &\xi\in\mathbb R,\\[2mm]

 \underline{\varphi}(-\infty)=1, \ \ \ \underline{\varphi}(\infty)=0,
 \end{array}\right.\eess
and
 \bess\left\{\begin{array}{ll}
 s\bar{\psi}'-d\bar{\psi}''\geq r\bar{\psi}(1-\bar{\psi}-b\underline{\varphi}), \ \ \xi>0,\\[2mm]
 \bar{\psi}(0)=0, \ \ \bar{\psi}(\infty)=1.
 \end{array}\right.\eess

{\bf Step 2}. Construction of $(\bar{\varphi},\underline{\psi})$ satisfying 
\bes\lbl{3.4}
\mbox{$\bar\varphi\geq \underline\varphi$, $\underline\psi\leq\bar\psi$.}
\ees

For $s\in[0,s^*)$, by the definition of $s^*$, there exists  $(\bar\varphi,\underline{\psi})\in K$ such that $F(\bar{\varphi},\underline{\psi})>s$. Consequently,
\bess\left\{\begin{array}{ll}
  s \bar{\varphi}'-\bar{\varphi}''> \bar{\varphi}(1-\bar{\varphi}-a\underline{\psi}), \qquad \bar{\varphi}'<0, \ \ \ \xi\in\mathbb{R},\\[1mm]
 s \underline{\psi}'-d\underline{\psi}''< r\underline{\psi}(1-\underline{\psi}-b\bar{\varphi}),\ \ \ \underline{\psi}'>0, \ \ \ \xi\geq 0,\\[1mm]
 \bar{\varphi}(-\infty)=1, \;  \bar{\varphi}(\infty)=0,\ \ \underline{\psi}(0)=0, \ \ \underline{\psi}(\infty)= 1.
 \end{array}\right.\eess

Now we prove (\ref{3.4}). To prove $\underline{\varphi}(\xi)\leq \bar{\varphi}(\xi)$ in $\mathbb{R}$, it is enough to show that this is true for $\xi<0$ since $\underline{\varphi}(\xi)\equiv 0$ and $\bar{\varphi}(\xi)>0$ for $\xi\geq 0$. Since $\underline{\psi}(\xi)\equiv 0$ and $\underline{\varphi}'(\xi)<0$ for $\xi\leq 0$, we see that $\underline{\varphi}$ and $\bar{\varphi}$ satisfy
 \bess\left\{\begin{array}{ll}
 s\underline{\varphi}'-\underline{\varphi}''
 \le\underline{\varphi}(1-\underline{\varphi}), \ \ & \xi<0,\\[1mm] s\bar{\varphi}'-\bar{\varphi}''> \bar{\varphi}(1-\bar{\varphi}-a\underline{\psi})
 =\bar{\varphi}(1-\bar{\varphi}), \ \ & \xi<0,\\[1mm]
 \underline{\varphi}(-\infty)=1=\bar{\varphi}(-\infty), \ \ \underline{\varphi}(0)=0<\bar{\varphi}(0).
 \end{array}\right.\eess
In view of Lemma \ref{p3.1}, we have $\underline{\varphi}(\xi)\leq \bar{\varphi}(\xi)$ for $\xi\leq 0$. Similarly, since $\bar\psi'(\xi)>0$ for $\xi\geq 0$, we have
\bess
 s\bar{\psi}'-d\bar{\psi}''\ge r\bar{\psi}(1-\bar{\psi}),
  \ \ \ \xi\geq 0.
 \eess
Clearly,
 \[s\underline{\psi}'-d\underline{\psi}''< r\underline{\psi}(1-\underline{\psi}-b\bar{\varphi})
 <r\underline{\psi}(1-\underline{\psi}),
  \ \  \ \xi\geq 0.\]
Thus the second inequality of (\ref{3.4}) is also satisfied.

{\bf Step 3}: Existence of a solution to (\ref{3.2}) when $s\in[0,s^*)$.

Define $\chi(\xi)=1$ for $\xi\geq 0$ and $\chi(\xi)=0$ for $\xi<0$.  Let  $(p(t,\xi),q(t,\xi))$ be the unique solution of
  \bes\left\{\begin{array}{ll}
 p_t-p_{\xi\xi}+s p_\xi
 =p(1-p-a\chi q),\ \ &t>0, \ \ \xi\in\mathbb{R},\\[1mm]
  q_t-d q_{\xi\xi}+s q_\xi=r q(1- q-bp),\ \ &t>0, \ \ \xi>0,\\[1mm]
  q(t,0)=0, &t> 0,\\[1mm]
 p(0,\xi)=\underline{\varphi}(\xi), \ \ &\xi\in\mathbb{R}, \\[1mm]
  q(0,\xi)=\bar{\psi}(\xi), \ \ &\xi\geq 0.
 \end{array}\right.\lbl{3.12}\ees
By the strong maximum principle, we have  $p(t,\xi)>0$ for $t>0$, $\xi\in\mathbb{R}$ and $ q(t,\xi)>0$ for $t>0$, $\xi>0$. 

Moreover, in view of the differential inequalities satisfied by $(\underline\varphi, \bar\psi)$, and the order-preserving property of competition systems,  we also have that $p(t,\xi)$ and $ q(t,\xi)$ are increasing and decreasing in $t$, respectively. 

Furthermore, due to the differential inequalities satisfied by $(\bar{\varphi}, \underline{\psi})$, and  (\ref{3.4}), by use of the comparison principle once again, we get that
 \[
p(t,\xi)\leq\bar{\varphi}(\xi) \ \ \mbox{for} \ t>0, \ \xi\in\mathbb{R};
 \quad \ \  q(t,\xi)\geq\underline{\psi}(\xi) \ \ \mbox{for} \ t,\,\xi>0.
\]
 Therefore, the limits
 \bess
 \lim_{t\to\infty} p(t,\xi)=\varphi_*(\xi), \ \ \ \lim_{t\to\infty} q(t,\xi)=\psi_*(\xi)
 \eess
exist, and satisfy 
\[\mbox{
$\underline\varphi(\xi)\leq \varphi_*(\xi)\le \bar\varphi(\xi)$ for $\xi\in\mathbb{R}$, $\bar\psi(\xi)\geq \psi_*(\xi)\ge\underline{\psi}(\xi)$ for $\xi>0$.}
\]
It follows that 
 \bes
 \varphi_*(-\infty)=1, \; \varphi_*(\infty)=0,\ \ \psi_*(0)=0, \ \ \psi_*(\infty)= 1.
 \lbl{3.14}\ees
Moreover, upon setting  $\psi_*(\xi)=0$ for $\xi<0$, it is easily seen that $(\varphi_*,\psi_*)$ satisfies the differential equations in (\ref{3.2}). By the 
Hopf boundary lemma,  we have
 \bes
 \psi_*'(0)>0.\lbl{3.15}\ees

It remains to show that $\varphi'_*<0$ $(\forall\xi\in\mathbb R)$ and $\psi_*'>0$
$(\forall \xi\geq 0)$.  Since $\underline\varphi'\leq 0$ and $\bar\psi'\geq 0$, we may use the maximum principle to the cooperative system satisfied by 
$(w(t,\xi), z(t,\xi)):=(-p_\xi(t,\xi), q_\xi(t,\xi))$ to conclude that 
\[
p_\xi(t,\xi)\leq 0 \; (\forall t>0, \forall \xi\in\mathbb R),\; q_\xi(t,\xi)\geq 0\; (\forall t>0, \forall \xi\geq 0).
\]
It follows that
\[
 \varphi'_*\leq 0 \; (\forall\xi\in\mathbb R),\;\;  \psi_*'\geq 0
\; (\forall \xi\geq 0).
\]
Applying the strong maximum principle to the cooperative system satisfied by $(-\varphi'_*, \psi_*')$,
 we further obtain
 \[
  \varphi_*'(\xi)<0 \  (\forall \ \xi\in\mathbb{R}),\;\; \psi_*'(\xi)>0 \ (\forall  \xi\geq 0).
  \]
Hence $(\varphi_*,\psi^*)$ is a solution of (\ref{3.2}).
\end{proof}

\begin{lem}\lbl{uniqueness}
For every $s\in [0, s^*)$, \eqref{3.2} has a unique solution.
\end{lem}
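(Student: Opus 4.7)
Let $(\varphi_i,\psi_i)$, $i=1,2$, be two solutions of \eqref{3.2} with the same $s\in[0,s^*)$. The plan is a sliding (shift-comparison) argument exploiting the reversed ordering $(\varphi,\psi)\preceq(\tilde\varphi,\tilde\psi)$ iff $\varphi\le\tilde\varphi$ on $\mathbb R$ and $\psi\ge\tilde\psi$ on $[0,\infty)$. For $\tau\in\mathbb R$, set
\[
W_1(\xi):=\varphi_2(\xi)-\varphi_1(\xi+\tau),\qquad W_2(\xi):=\psi_1(\xi+\tau)-\psi_2(\xi),
\]
and expand the nonlinear terms of \eqref{3.2} by the mean value theorem. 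This gives a linear system of the form
\[
-W_1''+sW_1'+c_{11}W_1=a\varphi_2\,W_2,\qquad -dW_2''+sW_2'+c_{22}W_2=rb\,\psi_2\,W_1,
\]
with nonnegative off-diagonal coupling $a\varphi_2,\,rb\psi_2\ge 0$. The system is therefore cooperative in $(W_1,W_2)$, and the strong maximum principle and Hopf lemma apply in the usual way.

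First I would verify that for all sufficiently large $\tau>0$, $W_1\ge 0$ on $\mathbb R$ and $W_2\ge 0$ on $[0,\infty)$. The region $\xi\to-\infty$ is easy: $\varphi_2\to 1$ while $\varphi_1(\xi+\tau)\to 0$ (for $\tau$ large and $\xi$ bounded), and $\psi_2$ vanishes on $(-\infty,0]$ while $\psi_1(\xi+\tau)\to 1$. The delicate region is $\xi\to+\infty$, where both solutions approach the common endpoint $(0,1)$; here a linearisation of \eqref{3.2} at $(0,1)$, combined with a Levinson-type asymptotic result in the spirit of Lemma \ref{(3.1)s_0}, identifies a common leading exponential decay rate $\mu=\mu(s)>0$ with $\varphi_i(\xi)\sim C_i e^{-\mu\xi}$ and matching decay for $1-\psi_i$. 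Shifting $\varphi_1$ by $\tau$ replaces $C_1$ by $C_1 e^{-\mu\tau}$, which is below $C_2$ for large $\tau$, giving the desired inequality beyond a large bounded window; on the bounded window a straightforward compactness sandwich finishes this step.

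Next, set $\tau^*:=\inf\{\tau\ge 0: W_1\ge 0\text{ on }\mathbb R \text{ and } W_2\ge 0 \text{ on }[0,\infty)\}$, which is finite by the previous step. I would prove $\tau^*=0$ by contradiction. If $\tau^*>0$, continuity gives $W_1,W_2\ge 0$ at $\tau=\tau^*$. The strong maximum principle for the cooperative system above then leaves two alternatives. Either $W_1(\xi_0)=0$ at some $\xi_0\in\mathbb R$, forcing $W_1\equiv 0$; the first equation then yields $a\varphi_2 W_2\equiv 0$, hence $W_2\equiv 0$ on $[0,\infty)$, contradicting $W_2(0)=\psi_1(\tau^*)>0=\psi_2(0)$. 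Or $W_1>0$ strictly on $\mathbb R$ and $W_2>0$ on $(0,\infty)$ with $W_2'(0)>0$ by Hopf; the same asymptotic estimates used to initialise the sliding then show that these strict inequalities persist under a small further shift $\tau=\tau^*-\delta$ with $\delta>0$, contradicting the definition of $\tau^*$. Hence $\tau^*=0$, i.e.\ $\varphi_1\le\varphi_2$ and $\psi_1\ge\psi_2$; exchanging the roles of the two solutions yields the reverse, so $(\varphi_1,\psi_1)=(\varphi_2,\psi_2)$.

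The hard part will be the sharp asymptotic analysis at $+\infty$: one must pin down a common exponential decay rate for the two solutions at the endpoint $(0,1)$, both in order to initialise the sliding for large $\tau$ and to perturb away from $\tau^*$. This reduces to the linearisation of \eqref{3.2} at $(0,1)$ combined with a Levinson-type theorem for linear ODE systems with exponentially decaying perturbations, essentially the same tool used for Lemma \ref{(3.1)s_0}. Once this asymptotic input is in place, the cooperative strong maximum principle and Hopf lemma close the argument in the standard way.
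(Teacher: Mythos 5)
Your proposal follows essentially the same route as the paper: a sliding comparison for a cooperative system, with the crucial quantitative input coming from Levinson-type asymptotics for the linearisation at the saddle $(0,1)$. The one structural difference is organizational: you compare two arbitrary solutions directly and then exchange their roles, whereas the paper first shows (Step~1 of its proof) that the solution $(\varphi_*,\psi_*)$ produced in Lemma~\ref{existence} is extremal, and then slides one arbitrary solution against $(\varphi_*,\psi_*)$ in a single direction. Your variant is a mild simplification since it dispenses with the extremality step, but it is not a different method.

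Where I would push back is the decrement from $\tau^*$. You assert that when $W_1>0$ strictly, ``the same asymptotic estimates used to initialise the sliding'' show the inequalities persist under a further small shift $\tau=\tau^*-\delta$. Knowing only the individual expansions $\varphi_i(\xi)\sim C_i e^{\gamma_1\xi}$ as $\xi\to+\infty$ gives, from $W_1\ge 0$ at $\tau^*$, only the weak inequality $C_2\ge C_1 e^{\gamma_1\tau^*}$; if these two numbers happen to be equal, then $W_1(\xi)=o(e^{\gamma_1\xi})$ at $+\infty$ and a further negative shift of $\tau$ would flip the sign of $W_1$ far out, so the argument breaks. The paper's Step~4 closes exactly this hole: it writes the system satisfied by the differences $(\phi,\zeta)=(W_1,W_2)$ in the form \eqref{phi-zeta}, applies the same Levinson theorem to that system, and concludes from $\phi,\zeta>0$ that their leading coefficients $C_1,C_2$ in \eqref{phi}--\eqref{zeta} are strictly positive; this gives the strict inequality $C_*>C_\varphi e^{\gamma_1\bar k}$ needed to decrement. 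So the Levinson machinery must be applied to the \emph{difference} system, not just to each solution separately. A second, smaller point: the region $\xi\to-\infty$ of your initial sliding is not handled by ``$\varphi_1(\xi+\tau)\to 0$ for $\xi$ bounded''; for $\xi\to-\infty$ with $\tau$ fixed, both $\varphi_2(\xi)$ and $\varphi_1(\xi+\tau)$ tend to $1$. The correct argument, as in Step~3 of the paper, is that on $(-\infty,0]$ one has $\psi_2\equiv 0$, so $\varphi_2$ solves the decoupled logistic equation while $\varphi_1(\cdot+\tau)$ is a subsolution of it, and Lemma~\ref{p3.1} together with $\varphi_1(\tau)\le\varphi_2(0)$ (valid for $\tau$ large) gives the ordering there.
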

\begin{proof}
Let $(\varphi,\psi)$ be an arbitrary solution of (\ref{3.2}). We shall prove that
 \bes
 \varphi(\xi)=\varphi_*(\xi) \ (\forall \xi\in\mathbb{R}),
\quad \psi(\xi)=\psi_*(\xi)
 \ (\forall  \xi\geq 0),
  \lbl{3.19}\ees
where $(\varphi_*,\psi_*)$ is the solution of (\ref{3.2}) obtained in Step 3 of the proof of Lemma \ref{existence}.
As before we take $\psi(\xi)=\psi^*(\xi)=0$ for $\xi\le 0$.

 We are going to prove \eqref{3.19} in four steps, involving a ``sliding method'' (see Steps 3 and 4).

{\bf Step 1}. We show that
 \bes
  \varphi_*(\xi)\leq \varphi(\xi)\ (\forall \xi\in\mathbb{R}),\;
\quad \psi(\xi)\leq \psi_*(\xi)
 \ (\forall  \xi\geq  0).
 \lbl{3.21}\ees
 The argument leading to (\ref{3.4}) can be repeated here to yield
 \bess
 \underline{\varphi}(\xi)\leq\varphi(\xi) \ (\forall  \xi\in\mathbb{R}),\;
\quad \psi(\xi)\le\bar{\psi}(\xi)
 \  (\forall  \xi\geq 0),
  \eess
where $(\underline{\varphi},\bar {\psi})$ is given in Step 1 of the proof of Lemma \ref{existence}.
 So the solution $(p(t,\xi),q(t,\xi))$ of (\ref{3.12}) satisfies
  \bess
 p(t,\xi)\leq\varphi(\xi) \ (\forall  t\ge 0, \ \forall \xi\in\mathbb{R}),
\quad \psi(\xi)\le q(t,\xi) \ (\forall  t\ge 0, \ \forall \xi\geq 0),
  \eess
which clearly implies \eqref{3.21}.

{\bf Step 2}. Asymptotic expansions of $\vp(\xi)$ and $\psi(\xi)$ as $\xi\to+\ty$.

A simple calculation shows that the first order ODE system satisfied by $(\vp,\vp',\psi, \psi')$ has a critical point at $(0,0,1,0)$, which is a saddle point. Therefore by standard stable manifold theory  (see, e.g., Theorem 4.1 and its proof in Chapter 13 of \cite{CL}), 
\[
\vp(\xi)\to 0,\; 1-\psi(\xi)\to 0 \mbox{ exponentially as } \xi\to+\infty.
\]
The equations satisfied by $\vp$ and  $w:=1-\psi$ may be writen in the form 
\bes\lbl{phi-w}
\left\{\begin{array}{l}
\vp''-s\vp'-(a-1)\vp+\epsilon_1(\xi)\vp=0,\smallskip\\[1mm]
dw''-sw'-rw+rb\vp+\epsilon_2(\xi)w=0,
\end{array}
\right.
\ees
where
\[
\epsilon_1(\xi):=a w(\xi)-\vp(\xi)\to 0\mbox{ exponentially as } \xi\to+\infty,
\]
and
\[
\epsilon_2(\xi):=rw(\xi)-rb\vp(\xi)\to 0 \mbox{ exponentially as } \xi\to+\infty.
\]

Set
\[
\gamma_1:=\frac{s-\sqrt{s^2+4(a-1)}}{2},\;\; \gamma_2:=\frac{s+\sqrt{s^2+4(a-1)}}{2},
\]
and
\[g(y)=-dy^2+sy+r,\; \lambda_1=\frac{s-\sqrt{s^2+4rd}}{2d},\;\; \lambda_2=\frac{s+\sqrt{s^2+4rd}}{2d}.
\]
Then define
\[ 
u_1:=e^{\gamma_1 \xi},\; u_2:=e^{\gamma_2\xi},
\]
and
\[
v_1:=\left\{\begin{array}{ll}
\frac{rb}{g(\gamma_1)}e^{\gamma_1\xi}, & \gamma_1\not=\lambda_1,\medskip\\
\frac{rb}{g'(\gamma_1)}\xi e^{\gamma_1\xi}, & \gamma_1=\lambda_1,
\end{array}\right.  \;\; v_2:=\left\{\begin{array}{ll}
\frac{rb}{g(\gamma_2)}e^{\gamma_2\xi}, & \gamma_2\not=\lambda_2,\medskip\\
\frac{rb}{g'(\gamma_2)}\xi e^{\gamma_2\xi}, & \gamma_2=\lambda_2,
\end{array}\right.  \;\; v_3:=e^{\lambda_1\xi},\;\; v_4:=e^{\lambda_2\xi}.
\]
It is easily seen that
\[
{\bf u_1}:=(u_1, v_1),\; {\bf u_2}:=(u_2, v_2),\; {\bf u_3}:=(0, v_3),\; {\bf u_4}:=(0, v_4)
\]
are linearly independent solutions of the linear system
\[
\left\{
\begin{array}{l}
u''-su'-(a-1)u=0,\\[1mm]
dv''-sv'-rv+rbu=0.
\end{array}\right.
\]
We are now in a position to apply  to the system \eqref{phi-w} Theorem 8.1 in Chapter 3 of \cite{CL} (for the case $\gamma_1\not=\lambda_1$ and $\gamma_2\not=\lambda_2$), or a variant of this result (see Question 35 in Chapter 3 of \cite{CL} or Theorem 13.1 in Chapter X of \cite{H}, for the remaining cases), 
to conclude that \eqref{phi-w} has four linearly independent solutions ${\bf \tilde u_i}$, $i=1,2,3,4$, satisfying
\[
{\bf \tilde u_i}(\xi)=(1+o(1)){\bf u_i}(\xi) \mbox{ as } \xi\to+\infty,\; i=1,2,3,4.
\]
Since $(\vp,w)$ solves \eqref{phi-w}, there exist constants $c_i$, $i=1,2,3,4$, such that
\[
(\vp,w)=\sum_{i=1}^4 c_i{\bf \tilde u_i}.
\]
Since $\lambda_2>0,\; \gamma_2>0$ and $\vp(+\infty)=w(+\infty)=0$, we necessarily have $c_2=c_4=0$.
Using  $\vp(\xi)>0$ and  $w(\xi)>0$ we  deduce that $c_1>0$, and in the case $\gamma_1<\lambda_1$,
we further have $c_3>0$.  We thus obtain,
 as $\xi\to+\infty $,
 \bes
 \vp(\xi)&=&c_1{\rm e}^{\gamma_1\xi}(1+o(1)),\nonumber\\[1mm]
 w(\xi)&=&\left\{\begin{array}{ll}\medskip
 \dd c_1\frac{rb}{g(\gamma_1)}{\rm e}^{\gamma_1\xi}(1+o(1)) \ \ \ &\mbox{if} \ \ \gamma_1>\lambda_1,\\\medskip
 \dd c_1\frac{rb}{g'(\gamma_1)}\xi{\rm e}^{\gamma_1\xi}(1+o(1)) \ \ \ &\mbox{if} \ \ \gamma_1=\lambda_1,\\
 \dd c_3{\rm e}^{\lambda_1\xi}(1+o(1)) \ \ \ &\mbox{if} \ \ \gamma_1<\lambda_1.
 \end{array}\rr.\nonumber
 \ees
In other words, there exist positive constants  $C_\vp$ and $C_\psi$ such that, as $\xi\to+\infty$,
\bes\lbl{vp}
\vp(\xi)=C_\vp{\rm e}^{\gamma_1\xi}(1+o(1)),
\ees
 \bes
 \psi(\xi)&=&\left\{\begin{array}{ll}\medskip
 1-C_\psi{\rm e}^{\gamma_1\xi}(1+o(1)) \ \ \ &\mbox{if} \ \ \gamma_1>\lambda_1,\\\medskip
 1-C_\psi\xi{\rm e}^{\gamma_1\xi}(1+o(1)) \ \ \ &\mbox{if} \ \ \gamma_1=\lambda_1,\\
 1-C_\psi{\rm e}^{\lambda_1\xi}(1+o(1)) \ \ \ &\mbox{if} \ \ \gamma_1<\lambda_1.
 \end{array}\rr.
 \lbl{psi}\ees

{\bf Step 3}.  We show the existence of some constant $k_0>0$ such that, for all $k\ge k_0$,
 \bes\lbl{k0}
\varphi_{*}(\xi)\geq \varphi(\xi+k) \ (\forall\xi\in \mathbb{R}),\quad \psi(\xi+k)\geq \psi_*(\xi)\ (\forall\xi\geq 0).
\ees

Due to $\varphi(+\infty)=0$, there exists $k_1>0$ such that $\varphi_*(0)\geq \varphi(k)$ for all $k\ge k_1$. Denote $\varphi_{k}(\xi)=\varphi(\xi+k)$. Then $\varphi_{k}$ and $\varphi_*$ satisfy
 \bess
 &s\varphi'_{k}-\varphi''_{k}\leq \varphi_{k}(1-\varphi_{k}),\ \ s\varphi_*'-\varphi_*''=\varphi_*(1-\varphi_*),\ \ \xi<0,&\\[1mm]
 &\varphi_{k}(-\infty)=\varphi_*(-\infty)=1, \ \ \ \varphi_{k}(0)=\varphi(k)\leq \varphi_*(0).&
 \eess
By Lemma \ref{p3.1} we deduce
 \[
\varphi_*(\xi)\geq \varphi_k(\xi) \  (\forall  \xi\le 0).
\]

The expansion in Step 2 above also holds for $\psi_*(\xi)$, namely, as $\xi\to+\infty$,
 \bes\lbl{psi*}
 \psi_*(\xi)&=&\left\{\begin{array}{ll}\medskip
 1-C^*{\rm e}^{\gamma_1\xi}(1+o(1)) \ \ \ &\mbox{if} \ \ \gamma_1>\lambda_1,\\\medskip
 1-C^*\xi{\rm e}^{\gamma_1\xi}(1+o(1)) \ \ \ &\mbox{if} \ \ \gamma_1=\lambda_1,\\
 1-C^*{\rm e}^{\lambda_1\xi}(1+o(1)) \ \ \ &\mbox{if} \ \ \gamma_1<\lambda_1,
 \end{array}\rr.
  \ees
where $C^*>0$. In view of (\ref{psi}) and  (\ref{psi*}), we can find $\xi_0\gg 1$ and $k_2\ge k_1$ such that
 \[\psi(\xi+k_2)\geq \psi_*(\xi) \ \ (\forall  \xi\ge\xi_0).\]
Since $\psi(\xi)$ is increasing, it follows that
 \[\psi_k(\xi):=\psi(\xi+k)\geq \psi_*(\xi) \ \ (\forall  k\ge k_2, \forall \xi\ge\xi_0).\]
 Hence for   $k_3=k_2+\xi_0$, we have
 \[\psi(\xi+k)\geq \psi_*(\xi) \ \ (\forall  k\ge k_3, \forall \xi\geq 0).\]

By Step 2 we have, for $\xi\to+\infty$,
 \bes\lbl{vp-vp*}
\varphi_{*}(\xi)=C_*{\rm e}^{\gamma_1\xi}(1+o(1)), \ \ \
 \varphi(\xi)=C_{\vp}{\rm e}^{\gamma_1\xi}(1+o(1)),
\ees
with $C_*>0$ and $C_\vp>0$. Hence, there exist $\xi_1\gg 1$ and $k_0\ge k_3$ such that
 \[
\varphi_{*}(\xi)\geq \varphi(\xi+k)\ \ (\forall  k\ge k_0, \forall \xi\ge\xi_1).
\]
Therefore $\varphi_{*}$ and $\varphi_k$ satisfy
 \bess
& s\varphi'_{*}-\varphi''_{*}\;=&\varphi_{*}(1-\varphi_{*}-a\psi_*),\ \ \xi>0,\\[1mm] 
&s\varphi_k'-\varphi_k''\;=&\varphi_k(1-\varphi_k-a\psi_k)\leq \varphi_k(1-\varphi_k-a\psi_*),\ \ \xi>0,\\[1mm]
 &\varphi_{*}(0)\geq \varphi_k(0), &\ \ \ \dd\limsup_{\xi\to\infty}\frac{\varphi_k(\xi)}{\varphi_{*}(\xi)}\le 1.
 \eess
By Lemma \ref{p3.1} we deduce 
\[
\mbox{$\varphi_{*}(\xi)\geq \varphi_k(\xi)$ for all $\xi\ge 0$.}
\]
Hence \eqref{k0} holds for $k\geq k_0$.

{\bf Step 4}. Completion of the proof.

Define
 \[
\bar k=\inf\{k_0>0:\, \varphi_{*}(\xi)\geq \varphi(\xi+k) \ \ \mbox{in} \ \mathbb{R}, \ \ \psi(\xi+k)\geq \psi_*(\xi) \ \ \mbox{in} \ [0,\infty), \ \ \forall  k\geq k_0\}.
\]
Clearly $\bar k\geq 0$ and
 \[
\varphi_{*}(\xi)\geq\varphi_{\bar k}(\xi)=\varphi(\xi+\bar k) \ \ \ \mbox{in} \ \ \mathbb{R}, \quad \ 
\psi_{\bar k}(\xi)=\psi(\xi+\bar k)\geq \psi_*(\xi)\ \ \ \mbox{in} \ \ [0,\infty).
\]
If  $\bar k=0$, the above inequalities and \eqref{3.21}  imply $(\vp,\psi)\equiv (\vp_*,\psi_*)$, as we wanted.

Suppose that $\bar k>0$. We are going to derive  a contradiction.
We observe that $\psi_*(\xi)$ and $\psi_{\bar k}(\xi)$ satisfy
 \bess
 &s\psi_*'-d\psi_*''\:\; =& r\psi_*(1-\psi_*-b\varphi_*)\leq r\psi_*(1-\psi_*-b\varphi_{\bar k}), \ \ \xi>0,\\[1mm]
 &s{\psi_{\bar k}}'-d{\psi_{\bar k}}''\;=& r\psi_{\bar k}(1-\psi_{\bar k}-b\varphi_{\bar k}), \ \ \xi>0,\\[1mm]
 &\psi_*(0)=0<\psi_{\bar k}(0),&\;\;  \psi_*(\infty)=\psi_{\bar k}(\infty)=1.
\eess
By the strong maximum principle, we obtain 
 \bes\psi_{\bar k}(\xi)>\psi_*(\xi)\ \ \ \mbox{in} \ \ [0,\infty).
 \lbl{3.26}\ees
Similarly we can show that
 \bes
 \varphi_{\bar k}(\xi)<\varphi_*(\xi) \ \ \ \mbox{in} \ \ \mathbb{R}.
 \lbl{3.27}\ees

 Define
 \[\phi:=\varphi_{*}-\varphi_{\bar k}, \ \ \zeta:=\psi_{\bar k}-\psi_*.\]
Then
 \[\phi(\xi)>0 \ \ \mbox{in} \ \ \mathbb{R},
 \quad \zeta(\xi)>0 \ \ \mbox{in} \ \ [0,\infty),\]
and  $(\phi,\zeta)$ satisfies
 \bess
\left\{\begin{array}{ll}\medskip
 s\phi'-\phi''=a\varphi_{\bar k}\zeta-(\varphi_{\bar k}+\varphi_{*}+a\psi_*-1)\phi, \ \ \xi\in\mathbb{R},\\[1mm]
 s\zeta'-d\zeta''=rb\psi_*\phi-r(\psi_{\bar k}+\psi_*+b\varphi_{\bar k}-1)\zeta, \ \ \xi>0,
 \end{array}\right.
\eess
which can be rewritten in the form
\bes\lbl{phi-zeta}
\left\{\begin{array}{ll}\medskip
\phi''- s\phi'+(1-a)\phi +\tilde\epsilon_1(\xi)\phi+\tilde\epsilon_2(\xi)\zeta=0, \ \ \xi\in\mathbb{R},\\[1mm]
 d\zeta''-s\zeta'-r\zeta+ rb\phi+\tilde\epsilon_3(\xi)\phi+\tilde\epsilon_4(\xi)\zeta=0, \ \ \xi>0,
 \end{array}\right.
\ees
where
\bess
&\tilde\epsilon_1(\xi):&=a[1-\psi_*(\xi)]-\vp_{\bar k}(\xi)-\vp_*(\xi),\; \tilde\epsilon_2(\xi):=a\vp_{\bar k}(\xi),\\[1mm]
&\tilde\epsilon_3(\xi):&=rb[\psi_*(\xi)-1], \; \tilde\epsilon_4(\xi):=r[2-\psi_{\bar k}(\xi)-\psi_*(\xi)]+b\vp_{\bar k}(\xi).
\eess
By our expansions in Step 2, we have
\[
\tilde\epsilon_i(\xi)\to 0 \mbox{ exponentially as } \xi\to+\infty,\; i=1,2,3,4.
\]
Therefore for the same reasons as in Step 2, we have, as $\xi\to+\ty$,
 \bes\lbl{phi}
\phi(\xi)=C_1{\rm e}^{\gamma_1\xi}(1+o(1)),
\ees
and
  \bes\lbl{zeta}
 \zeta(\xi)&=&\left\{\begin{array}{ll}\medskip
 C_2{\rm e}^{\gamma_1\xi}(1+o(1)) \ \ \ &\mbox{if} \ \ \gamma_1>\lambda_1,\\\medskip
 C_2\xi{\rm e}^{\gamma_1\xi}(1+o(1)) \ \ \ &\mbox{if} \ \ \gamma_1=\lambda_1,\\
 C_2{\rm e}^{\lambda_1\xi}(1+o(1)) \ \ \ &\mbox{if} \ \ \gamma_1<\lambda_1,
 \end{array}\rr.
 \ees
where $C_1, C_2$ are positive constants.

From \eqref{vp-vp*} and \eqref{phi}, we obtain
\[
C_*=C_\vp e^{\gamma_1\bar k}+C_1.
\]
Similarly, it follows from \eqref{psi}, \eqref{psi*} and \eqref{zeta} that
\[
C^*=C_\psi e^{\sigma \bar k}+C_2,
\;\; \mbox{
where $\sigma=\max\{\gamma_1,\lambda_1\}$.}
\]
Therefore, there exists $\epsilon_0>0$ sufficiently small so that
\[
C_*>C_\vp e^{\gamma_1 (\bar k-\epsilon)},\; C^*>C_{\psi}e^{\sigma (\bar k-\epsilon)} \;\mbox{ for all } \epsilon\in (0,\epsilon_0].
\]
In view of \eqref{vp-vp*}, \eqref{psi} and \eqref{psi*}, these inequalities imply that, for all large $\xi$, say $\xi\geq M>0$, we have
\[
\vp_*(\xi)\geq \vp(\xi+\bar k-\epsilon),\; \psi_*(\xi)\leq \psi(\xi+\bar k-\epsilon) \; (\forall \epsilon\in (0,\epsilon_0]).
\]
Since
\[
\vp_*(\xi)>\vp(\xi+\bar k) \mbox{ and } \psi_*(\xi)<\psi(\xi+\bar k) \mbox{ for } \xi\in [0, M],
\]
by continuity, we can find $\epsilon_1\in (0,\epsilon_0]$ such that
\[
\vp_*(\xi)\geq \vp(\xi+\bar k-\epsilon_1),\; \psi_*(\xi)\leq \psi(\xi+\bar k-\epsilon_1) \; (\forall \xi\in [0, M]).
\]
We thus obtain 
\bes\lbl{R+}
\vp_*(\xi)\geq \vp(\xi+\bar k-\epsilon_1),\; \psi_*(\xi)\leq \psi(\xi+\bar k-\epsilon_1) \; (\forall \xi\geq 0).
\ees
For $\xi<0$, we can use Lemma \ref{p3.1} and $\vp_*(0)\geq \vp_{\bar k-\epsilon_1}(0)$ to deduce
\bes\lbl{R-}
\vp_*(\xi)\geq \vp_{\bar k-\epsilon_1}(\xi) \; (\forall \xi\leq 0).
\ees
In view of the monotonicity of $\vp$ and $\psi$, and the definition of $\bar k$,
we deduce from \eqref{R+} and \eqref{R-} that $\bar k\leq\bar k-\epsilon_1$. This contradiction 
shows that $\bar k>0$ cannot happen, and the proof is complete.
\end{proof}

\begin{lem}\lbl{s*=s0}
For $s\geq s_0$, problem \eqref{3.2} has no solution. In view of Lemma \ref{existence}, this in particular implies that $s^*=s_0$.
\end{lem}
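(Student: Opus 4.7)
My plan is to argue by contradiction, comparing a putative solution $(\varphi,\psi)$ of \eqref{3.2} with the Kan--on traveling wave at the minimal speed. The core idea is that the constraint $\psi\equiv 0$ on $(-\infty,0]$ is incompatible with the strict positivity $\Psi_0>0$ in any natural comparison. I would split the argument into two sub-cases, $s>s_0$ and $s=s_0$: the strict case is handled by a parabolic comparison against the Kan--on wave, while the critical case is handled by a sliding method modeled on Lemma~\ref{uniqueness}.

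Let $(\Phi_0,\Psi_0)$ denote the Kan--on wave of \eqref{3.1} at speed $s_0$, provided by Proposition~\ref{kan-on}. Introduce the parabolic functions $\tilde U(t,x):=\psi(st-x)$, $\tilde V(t,x):=\varphi(st-x)$ (with $\tilde U\equiv 0$ for $x>st$) together with $U_0(t,x):=\Psi_0(s_0 t-x)$, $V_0(t,x):=\Phi_0(s_0 t-x)$. A direct distributional calculation shows that the jump of $\tilde U_x$ at $x=st$ contributes a non-positive Dirac mass to $-d\tilde U_{xx}$, so that $\tilde U$ is a distributional sub-solution of its equation, while $\tilde V$ is an exact classical solution. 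Using the asymptotic behavior at $\pm\infty$ (as in Step~2 of Lemma~\ref{uniqueness}), one can choose a shift $L^*$ large enough that $\tilde U(0,\cdot)\leq U_0(0,\cdot-L^*)$ and $\tilde V(0,\cdot)\geq V_0(0,\cdot-L^*)$ on $\mathbb R$. The standard comparison principle for two-species competitive parabolic systems with cooperative ordering then preserves this ordering for $t>0$. Substituting $y=s_0 t-x-L^*$ into the first inequality yields
\[
\psi\bigl((s-s_0)t+y+L^*\bigr)\leq \Psi_0(y)\qquad \forall\,t\geq 0,\ y\in\mathbb R.
\]
If $s>s_0$, sending $t\to+\infty$ with $y$ fixed forces $1=\psi(+\infty)\leq\Psi_0(y)$, contradicting $\Psi_0(y)<1$. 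This rules out the strict case.

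For the critical case $s=s_0$, the parabolic comparison yields $\psi(\eta+L^*)\leq\Psi_0(\eta)$ and $\varphi(\eta+L^*)\geq\Phi_0(\eta)$ for all $\eta\in\mathbb R$. I would then define the sliding quantity
\[
\bar k:=\sup\{L\in\mathbb R:\,\psi(\eta+L)\leq\Psi_0(\eta)\ \forall\eta,\ \varphi(\eta+L)\geq\Phi_0(\eta)\ \forall\eta\},
\]
and show it is finite using the boundedness of $\psi,\varphi$ and the asymptotics at $+\infty$. At $k=\bar k$, both inequalities hold by continuity. Applying the strong maximum principle to the cooperative linear system satisfied by the differences $\phi:=\varphi-\Phi_0(\cdot-\bar k)$ and $\zeta:=\Psi_0(\cdot-\bar k)-\psi$ on $(0,\infty)$, either $(\phi,\zeta)$ vanishes identically there, which combined with continuity of $\zeta$ across $\xi=0$ forces $\psi(0)=\Psi_0(-\bar k)>0$, contradicting $\psi(0)=0$; or $\phi,\zeta$ are both strictly positive on $(0,\infty)$. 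A complementary scalar maximum-principle argument on $(-\infty,0)$---where the inhomogeneous term $a\Phi_0(\cdot-\bar k)\Psi_0(\cdot-\bar k)$ in the equation for $\phi$ is strictly positive---rules out interior zeros of $\phi$ there as well, so strict positivity holds throughout.

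The main obstacle is to close the argument in this strict-positivity subcase at $s=s_0$. My plan is to adapt the asymptotic bookkeeping of Step~4 of Lemma~\ref{uniqueness}: derive the next-order expansions of $\phi$ and $\zeta$ at $\xi\to+\infty$ (analogous to \eqref{phi}--\eqref{zeta}), use that $\tilde U$ is a \emph{strict} sub-solution (because the Dirac mass at $x=st$ is strictly negative) to extract strictly positive leading coefficients $C_1,C_2>0$ in those expansions, and then mimic the shifting-past-$\bar k$ step to produce inequalities at $L=\bar k+\epsilon_1$ that contradict $\bar k$ being the supremum. The delicate part is handling the degenerate asymptotic cases (the coincidence $\gamma_1=\lambda_1$ in the expansions at $+\infty$, and the double root $\beta_1=\beta_2$ that occurs in Lemma~\ref{(3.1)s_0} when $s_0=2\sqrt{rd(1-b)}$), which requires case analysis in parallel with \eqref{vp}--\eqref{psi} and \eqref{psi*}.
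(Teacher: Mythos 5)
Your proposal is correct in outline and, for the strict case $s>s_0$, takes a genuinely different route from the paper. The paper runs a single sliding argument for all $s\geq s_0$, comparing the putative solution $(\hat\varphi,\hat\psi)$ of \eqref{3.2} at speed $s$ with the Kan-on wave $(\Phi,\Psi)$ at the \emph{same} speed $s$ (available by Proposition~\ref{kan-on}), showing that the shift $k^*$ must be $-\infty$ and then sending $k\to-\infty$ to force $\hat\psi\leq 0$. You instead exploit the speed mismatch: by a parabolic comparison you pin $\psi((s-s_0)t+\cdot-L^*)\leq\Psi_0(\cdot)$ and then simply let $t\to\infty$ to force $\Psi_0\geq 1$, a contradiction. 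This neatly bypasses the asymptotic bookkeeping for $s>s_0$ and is arguably cleaner there; the price is that the critical case $s=s_0$ must still be handled by the sliding argument, which you run in essentially the paper's form (with $\sup$ in place of $\inf$, but with the same two ingredients — finiteness of the extremal shift plus the ability to slide past it).

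Two small points. First, you invoke "the asymptotic behavior at $\pm\infty$ (as in Step~2 of Lemma~\ref{uniqueness})" to establish the initial ordering of $\tilde V(0,\cdot)$ against $V_0(0,\cdot-L^*)$, but Step~2 gives the expansion only at $\xi\to+\infty$. For the $\xi\to-\infty$ side you either need the analogous expansions of $1-\varphi$ and $1-\Phi_0$ at $-\infty$ (derivable, but not in Step~2), or — more in keeping with the paper — you should invoke Lemma~\ref{p3.1} on $(-\infty,0]$: since $s\Phi_0'(\cdot+L^*)-\Phi_0''(\cdot+L^*)\leq s_0\Phi_0'(\cdot+L^*)-\Phi_0''(\cdot+L^*)\leq\Phi_0(\cdot+L^*)\bigl(1-\Phi_0(\cdot+L^*)\bigr)$ for $s\geq s_0$ (using $\Phi_0'<0$ and $a\Psi_0>0$), the shifted Kan-on profile is a subsolution of the scalar logistic equation solved by $\varphi$ on $\xi<0$, and Lemma~\ref{p3.1} yields $\Phi_0(\cdot+L^*)\leq\varphi$ there once $\Phi_0(L^*)\leq\varphi(0)$. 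Second, the "degenerate" alternative in which $(\phi,\zeta)\equiv 0$ on $(0,\infty)$ never actually occurs, since $\zeta(0)=\Psi_0(-\bar k)>0$ automatically; the strong maximum principle gives strict positivity directly, so that branch of your case analysis is redundant but harmless. With these adjustments the argument closes.
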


\begin{proof}
 Suppose on the contrary  that for some $s\geq s_0$, \eqref{3.2} has a solution $(\hat\varphi,\hat\psi)$. 
By Proposition \ref{kan-on}, for such an  $s$, \eqref{3.1} also has a solution, which we denote by  $(\Phi,\Psi)$. 
We are going to use  the sliding method again to derive a contradiction.

We first observe that $\hat\vp(\xi)$ and $\Phi(\xi)$ can be expanded near $\xi=+\infty$ in the form \eqref{vp}, while $\hat\psi(\xi)$ and $\Psi(\xi)$ can be expanded near $\xi=+\infty$ in the form \eqref{psi}.
This, together with the fact that $\Phi'(\xi)<0$ and $\Psi'(\xi)>0$, implies the existence of some  $k_0>0$ such that
 \[
\Phi(\xi+k)\leq \hat \vp(\xi),\; \Psi(\xi+k)\geq\hat\psi(\xi), \ \ \ \forall\ \xi\geq 0, \ \ k\geq k_0.
\]
Clearly 
\[
\Psi(\xi+k)>0=\hat\psi(\xi) \mbox{ for } \xi<0.
\]
Now we prove that 
\[
\mbox{$\Phi(\xi+k)\leq \hat\varphi(\xi)$ for all $\xi\in\mathbb{R}$ and $k\geq k_0$.}
\]
 We only need to show this for $\xi<0$. Denote, for $k\geq k_0$,  
\[
\mbox{$\Phi_k(\xi):=\Phi(\xi+k)$ and $\Psi_k(\xi):=\Psi(\xi+k)$;}
\]
 then 
 \bes
 &s\Phi'_k-\Phi''_k= \Phi_k(1-\Phi_k-a\Psi_k)
 \le\Phi_k(1-\Phi_k-a\hat\psi),\ \ \xi\in\mathbb{R},&
 \lbl{3.37}\\[1mm]
&s\hat\varphi'-\hat\varphi''=\hat\varphi(1-\hat\varphi-a\hat\psi),\ \ \xi\in\mathbb R,&\nonumber\\
 &\Phi_k(-\infty)=1=\hat\varphi(-\infty), \ \  \Phi_k(0)\leq \hat\varphi(0).\nonumber&
\ees
Applying Lemma \ref{p3.1} we deduce
$\Phi_k(\xi)\leq\hat\varphi(\xi)$ in $(-\infty,0]$. 

We are now able to define
 \[k^*=\inf\{k_0\in\mathbb{R}:\, \Phi_k(\xi)\leq \hat\varphi(\xi) \ \ \mbox{in} \ \mathbb{R}, \ \ \Psi_k(\xi)\geq \hat\psi(\xi) \ \ \mbox{in} \ [0,\infty), \ \ \forall \ k\geq k_0\}.\]
We claim that $k^*=-\infty$. Otherwise, $k^*$ is finite and we have, by continuity,
 \[
\Phi_{k^*}(\xi)\leq\hat\varphi(\xi) \ \ \ \mbox{in} \ \ \mathbb{R}, \quad \ \Psi_{k^*}(\xi)\geq\hat\psi(\xi)\ \ \ \mbox{in} \ \ 
[0,\infty).
\]
We note that the inequality (\ref{3.37}) still holds for $k=k^*$,  and this inequality is strict for $\xi<0$ due to $\Psi(\xi+k^*)>0=\hat\psi(\xi)$ for such $\xi$. Hence $\Phi_{k^*}(\xi)\not\equiv\hat\varphi(\xi)$, and by the strong maximum principle we obatin 
\[
\mbox{$\Phi_{k^*}(\xi)<\hat\varphi(\xi)$ for $x\in\mathbb R$.}
\]

We now have
  \bess\left\{\begin{array}{ll}
  s\Psi'_{k^*}-d\Psi''_{k^*}=r\Psi_{k^*}(1-\Psi_{k^*}-b\Phi_{k^*}),\ \ &\xi>0,\\[1mm]
  s\hat\psi'-d\hat\psi''=r\hat\psi(1-\hat\psi-b\hat\varphi)<r\hat\psi(1-\hat\psi-b\Phi_{k^*}),\ \ &\xi>0,\\[1mm]
\Psi_{k^*}(0)>0=\hat\psi(0), \ \ \Psi_{k^*}(+\infty)=\hat\psi(+\infty)=1.&
 \end{array}\right.\eess
It follows from Lemma \ref{p3.1} and the strong maximum principle that 
\[
\mbox{$\Psi_{k^*}(\xi)>\hat\psi(\xi)$ in $[0,\infty)$. }
\]
We may now use the expansion of $(\hat\vp-\Phi_{k^*}, \Psi_{k^*}-\hat\psi)$ near $\xi=+\infty$ as in Step 4 of the proof of Lemma \ref{uniqueness} to derive that 
\[
\Phi_{k^*-\epsilon}(\xi)\leq \hat\varphi(\xi), \; \Psi_{k^*-\epsilon}(\xi)\geq\hat\psi(\xi) \mbox{ for all $ \xi\in [0,+\infty)$
and  some small $\epsilon>0$. }
\]
Since $\Psi_{k^*-\epsilon}(\xi)>0=\hat\psi(\xi)$ for $\xi<0$, we see that \eqref{3.37} holds for $k=k^*-\epsilon$, and we can thus use Lemma \ref{p3.1} to deduce
\[
\Phi_{k^*-\epsilon}(\xi)\leq \hat\varphi(\xi)\; \mbox{ for } \xi<0.
\]
Due to the monotonicity of $\Phi$ and $\Psi$, we can now conclude that, for all $k\geq k^*-\epsilon$,
\[
\Phi_{k}(\xi)\leq \hat\varphi(\xi)\; (\forall \xi\in\mathbb R), \; \Psi_{k}(\xi)\geq\hat\psi(\xi) \; (\forall \xi\geq 0).
\]
It follow that $k^*\leq k^*-\epsilon$. This contradiction proves our claim that $k^*=-\infty$.

The fact $k^*=-\infty$ implies that $\Psi(\xi+k)\geq \hat\psi(\xi)$ in $[0,\infty)$ for all $k\in\mathbb{R}$. Letting $k\to-\infty$ and using $\Psi(\xi+k)\to 0$ as $k\to-\infty$ we conclude that $\hat\psi(\xi)\leq 0$. This is a contradiction to the fact that  $(\hat\varphi,\hat\psi)$ is a solution of (\ref{3.2}). This completes the proof of the lemma.
\end{proof}

\begin{lem}\lbl{lem-s1-s2} Let $(\vp_s, \psi_s)$ denote the unique solution of \eqref{3.2} with $s\in [0, s_0)$. Then
 $0\leq s_1<s_2<s_0$ implies
\bes\lbl{s1-s2}
\psi_{s_1}'(0)>\psi_{s_2}'(0),  \; \psi_{s_1}(\xi)>\psi_{s_2}(\xi) \; (\forall \xi>0), \; \vp_{s_1}(\xi)<\vp_{s_2}(\xi) \; (\forall \xi\in\R).
\ees
\end{lem}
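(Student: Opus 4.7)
The plan is to show that $(\vp_{s_2},\psi_{s_2})$ serves as a strict super--subsolution pair for the $s_1$-semi-wave problem in the competition-preserving order (namely $\vp_1\le\vp_2$ paired with $\psi_1\ge\psi_2$), then to invoke the monotone iteration of Lemma \ref{existence} together with the uniqueness of Lemma \ref{uniqueness} to obtain the weak orderings $\vp_{s_1}\le \vp_{s_2}$ on $\R$ and $\psi_{s_1}\ge \psi_{s_2}$ on $[0,\infty)$, and finally to upgrade these to strict inequalities and the boundary derivative inequality by applying the strong maximum principle and Hopf's lemma to the differences. The starting observation is that, since $\vp_{s_2}'<0$, $\psi_{s_2}'>0$ on $[0,\infty)$ and $s_1<s_2$, plugging $(\vp_{s_2},\psi_{s_2})$ into the equations of \eqref{3.2} with speed $s_1$ gives
\[
s_1\vp_{s_2}'-\vp_{s_2}''>\vp_{s_2}(1-\vp_{s_2}-a\psi_{s_2}),\ \ s_1\psi_{s_2}'-d\psi_{s_2}''<r\psi_{s_2}(1-\psi_{s_2}-b\vp_{s_2}),
\]
the excess terms being $(s_1-s_2)\vp_{s_2}'>0$ and $(s_1-s_2)\psi_{s_2}'<0$ respectively.

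Next I would verify, exactly as in Step 2 of Lemma \ref{existence} (using Lemma \ref{p3.1} combined with the reflection $\xi\mapsto-\xi$ where needed), that the pair $(\underline\vp,\bar\psi)$ constructed in Step 1 of Lemma \ref{existence} satisfies $\underline\vp\le\vp_{s_2}$ on $\R$ and $\bar\psi\ge\psi_{s_2}$ on $[0,\infty)$. With this ordering in hand, I would run the parabolic iteration \eqref{3.12} with $s=s_1$ starting from $(\underline\vp,\bar\psi)$. The standard comparison principle for competitive parabolic systems, together with the stationary super--subsolution property established above, sandwiches the iterates as $p(t,\xi)\le\vp_{s_2}(\xi)$ and $q(t,\xi)\ge\psi_{s_2}(\xi)$ for all $t>0$. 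Letting $t\to\infty$ and invoking Lemma \ref{uniqueness}, the limit must equal $(\vp_{s_1},\psi_{s_1})$, so $\vp_{s_1}\le \vp_{s_2}$ on $\R$ and $\psi_{s_1}\ge \psi_{s_2}$ on $[0,\infty)$.

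To upgrade to strict inequalities, I set $\Delta_1:=\vp_{s_2}-\vp_{s_1}\ge 0$ and $\Delta_2:=\psi_{s_1}-\psi_{s_2}\ge 0$. Subtracting the semi-wave equations corresponding to the two speeds and rearranging produces linear second-order equations of the form
\[
-\Delta_1''+s_2\Delta_1'+c_1(\xi)\Delta_1=a\vp_{s_2}\Delta_2-(s_2-s_1)\vp_{s_1}' \quad(\xi\in\R),
\]
\[
-d\Delta_2''+s_1\Delta_2'+c_2(\xi)\Delta_2=rb\psi_{s_2}\Delta_1+(s_2-s_1)\psi_{s_2}' \quad(\xi>0),
\]
with bounded coefficients $c_1,c_2$, and right-hand sides that are \emph{strictly} positive because $\vp_{s_1}'<0$ and $\psi_{s_2}'>0$. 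If $\Delta_i$ vanished at an interior point $\xi_0$, the first- and second-order conditions at a minimum of a non-negative function would force the left-hand side at $\xi_0$ to be $\le 0$, contradicting the strict positivity of the source. Hence $\Delta_1>0$ on $\R$ and $\Delta_2>0$ on $(0,\infty)$. Since $\Delta_2(0)=0$ and the equation for $\Delta_2$ is uniformly elliptic with bounded coefficients, Hopf's boundary point lemma at $\xi=0$ yields $\Delta_2'(0)>0$, i.e.\ $\psi_{s_1}'(0)>\psi_{s_2}'(0)$.

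The step requiring the most care is the parabolic comparison in competition order at the Dirichlet boundary $\xi=0$ of the $q$-equation in \eqref{3.12}; however, this is essentially a replay of the comparison already used in Step 3 of the proof of Lemma \ref{existence}, so no genuinely new difficulty should arise.
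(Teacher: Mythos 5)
Your proof is correct and follows essentially the same route as the paper: comparing $(\vp_{s_2},\psi_{s_2})$ as a strict super/sub-solution pair for the $s_1$-problem, running the monotone parabolic iteration \eqref{3.12} starting from $(\underline\vp,\bar\psi)$, invoking uniqueness to get the weak ordering, and then upgrading to strict inequalities and to the Hopf estimate at $\xi=0$. Your explicit coupled linear system for $\Delta_1,\Delta_2$ with the strictly positive source terms $-(s_2-s_1)\vp_{s_1}'$ and $(s_2-s_1)\psi_{s_2}'$ is simply a concrete unpacking of the paper's citation of the strong maximum principle, so the two arguments are the same in substance.
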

\begin{proof}
To simplify the notations we denote $(\varphi_i,\psi_i)=(\varphi_{s_i},\psi_{s_i})$, $i=1,2$. Similarly to the proof of (\ref{3.4}) we can show that
 \bess
 \underline{\varphi}(\xi)\leq\varphi_2(\xi), \ \ \forall \ \xi\in\mathbb{R};
\quad \psi_2(\xi)\le\bar{\psi}(\xi),
 \ \ \forall \ \xi\geq 0,
  \eess
where $(\underline{\varphi},\bar {\psi})$ is given in Step 1 of the proof of Lemma \ref{existence}. It is obvious that $(\varphi_2,\psi_2)$ satisfies
 \bess\left\{\begin{array}{ll}
 s_1\varphi'_2-\varphi''_2>\varphi_2(1-\varphi_2-a\psi_2),\ \ \ &\xi\in\mathbb{R},\\[1mm]
 s_1\psi'_2-d\psi''_2<r\psi_2(1-\psi_2-b\varphi_2),\ \ &\xi>0.
 \end{array}\right.\eess
By the comparison principle we derive that the solution $(p(t,\xi),q(t,\xi))$ of (\ref{3.12}) with $s=s_1$ satisfies
  \bess
 p(t,\xi)\leq\varphi_2(\xi), \ \ \forall \ t\ge 0, \ \ \xi\in\mathbb{R};
\quad q(t,\xi)\ge\psi_2(\xi),
 \ \ \forall \ t\ge 0, \ \ \xi\geq 0,
  \eess
which implies
  \[\varphi_1(\xi)\leq\varphi_2(\xi), \ \ \forall \ \xi\in\mathbb{R};
\quad \psi_1(\xi)\ge\psi_2(\xi),
 \ \ \forall \ \xi\geq 0,\]
since $p(t,\xi)\to\vp_1(\xi)$ and $q(t,\xi)\to \psi_1(\xi)$ as $t\to\infty$ (cf. Step 3 in the proof of Lemma \ref{existence} and the conclusion of Lemma \ref{uniqueness}).
Furthermore, the strong maximum principle yields
 \[\varphi_1(\xi)<\varphi_2(\xi), \ \ \forall \ \xi\in\mathbb{R};
\quad \psi_1(\xi)>\psi_2(\xi),
 \ \ \forall \ \xi>0.\]
We now consider $w:=\psi_1-\psi_2$, which satisfies
\[
s_2w'-dw''> rw(1-\psi_1-\psi_2-b\vp_2),\; w>0 \; (\forall \xi>0);\; w(0)=0.
\]
By the Hopf boundary lemma, we deduce $w'(0)>0$, that is, $\psi_1'(0)>\psi'_2(0)$.
\end{proof}

\begin{lem}\lbl{map}
The operator $s\mapsto (\vp_s, \psi_s)$ is continuous from $[0,s_0)$ to $C_{loc}^2(\mathbb R)\times C^2_{loc}([0,+\infty))$.
Moreover, 
\bes\lbl{s-s0}
\mbox{$\lim_{s\to s_0}(\vp_s,\psi_s)=(1, 0)$ in $C_{loc}^2(\mathbb R)\times C^2_{loc}([0,+\infty))$.}
\ees
\end{lem}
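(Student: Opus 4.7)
\smallskip
\noindent\textbf{Proposal for Lemma \ref{map}.}
My plan is to use the monotonicity of $(\varphi_s,\psi_s)$ in $s$ established in Lemma \ref{lem-s1-s2}, together with the uniform bounds $0\le\varphi_s,\psi_s\le 1$ and standard interior elliptic estimates, to extract a $C^2_{loc}$ limit for any one-sided sequence $s_n\to s_*$, and then to identify that limit either by the uniqueness from Lemma \ref{uniqueness} (when $s_*<s_0$) or by a contradiction with Lemma \ref{s*=s0} followed by an energy identity (when $s_*=s_0$).

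For continuity at $s_*\in[0,s_0)$ I would fix $s_n\uparrow s_*$. By Lemma \ref{lem-s1-s2}, $\varphi_{s_n}$ is pointwise nondecreasing and $\psi_{s_n}$ pointwise nonincreasing, so the pointwise limits $(\tilde\varphi,\tilde\psi)$ exist. Interior $W^{2,p}_{loc}$ bounds applied to the ODE system upgrade the convergence to $C^2_{loc}(\R)\times C^2_{loc}([0,+\infty))$, so $(\tilde\varphi,\tilde\psi)$ satisfies the ODEs in \eqref{3.2} at $s_*$. The squeezes $\varphi_{s_n}\le\tilde\varphi\le\varphi_{s_*}$ and $\psi_{s_*}\le\tilde\psi\le\psi_{s_n}$ give the boundary conditions of \eqref{3.2} at $s_*$, and the strong maximum principle applied to the cooperative system satisfied by $(-\tilde\varphi',\tilde\psi')$ recovers strict monotonicity. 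Thus $(\tilde\varphi,\tilde\psi)$ solves \eqref{3.2} at $s_*$, and Lemma \ref{uniqueness} forces $(\tilde\varphi,\tilde\psi)=(\varphi_{s_*},\psi_{s_*})$. The case $s_n\downarrow s_*$ is handled identically after swapping monotonicities, establishing continuity on $[0,s_0)$.

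For $s\to s_0^-$, the same extraction yields a limit $(\varphi^*,\psi^*)$ satisfying the ODEs at $s_0$ with $\varphi^*(-\infty)=1$, $\psi^*(0)=0$, $\varphi^*$ nonincreasing and $\psi^*$ nondecreasing. If $\psi^*\not\equiv 0$, the strong maximum principle gives $\psi^{*\prime}>0$ on $[0,+\infty)$, so $\psi^*(+\infty)$ exists and is a strictly positive equilibrium value of the ODE system; because $a>1>b$ rules out the coexistence equilibrium, the only possibility is $(\varphi^*(+\infty),\psi^*(+\infty))=(0,1)$. Applying the strong maximum principle to the cooperative system satisfied by $(-\varphi^{*\prime},\psi^{*\prime})$ also yields $\varphi^{*\prime}<0$, so, after extending $\psi^*\equiv 0$ to $\xi\le 0$, $(\varphi^*,\psi^*)$ is a solution of \eqref{3.2} at $s_0$, contradicting Lemma \ref{s*=s0}. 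Therefore $\psi^*\equiv 0$, and $\varphi^*$ solves
\begin{equation*}
s_0\varphi^{*\prime}-\varphi^{*\prime\prime}=\varphi^*(1-\varphi^*)\ \ \text{on}\ \R,\quad \varphi^*(-\infty)=1,\quad 0\le\varphi^*\le 1,
\end{equation*}
with $\varphi^*$ nonincreasing, so $\varphi^*(+\infty)\in\{0,1\}$. If $\varphi^*(+\infty)=0$, multiplying the equation by $\varphi^{*\prime}$ and integrating over $\R$, using $\varphi^{*\prime}(\pm\infty)=0$ (a consequence of the finite limits of $\varphi^*$ and the boundedness of $\varphi^{*\prime\prime}$), one obtains
\begin{equation*}
s_0\int_{\R}(\varphi^{*\prime})^2\,d\xi \;=\; -\int_0^1 u(1-u)\,du \;=\; -\tfrac{1}{6},
\end{equation*}
which is impossible since $s_0>0$ by Proposition \ref{kan-on} and $\varphi^{*\prime}\not\equiv 0$. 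Hence $\varphi^*(+\infty)=1$, so $\varphi^*\equiv 1$, and $(\varphi_s,\psi_s)\to(1,0)$ in $C^2_{loc}(\R)\times C^2_{loc}([0,+\infty))$.

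The delicate step I expect to have to handle most carefully is the very last one: ruling out a nontrivial monotone profile for $\varphi^*$ at $s=s_0$. The ``wrong'' sign of $s_0$ in the scalar Fisher-type equation for $\varphi^*$ makes any such profile energetically inconsistent, and the single integration-by-parts identity above extracts exactly this obstruction.
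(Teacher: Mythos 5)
Your proposal is correct and takes essentially the same overall structure as the paper (extract a $C^2_{loc}$ limit along a monotone subsequence using Lemma \ref{lem-s1-s2}, identify it by Lemma \ref{uniqueness} when $\hat s<s_0$, and rule out a nontrivial limit at $s_0$), but it diverges from the paper in two of the sub-steps at $s_0$, in both cases in a somewhat cleaner way. For the subcase $\psi^*\not\equiv 0$, the paper splits into $\varphi^*(+\infty)=0$ and $\varphi^*(+\infty)\in(0,1]$ and in each re-runs the sliding argument of Lemma \ref{s*=s0} to reach $\psi^*\le 0$; you instead observe that $(\varphi^*(+\infty),\psi^*(+\infty))$ must be an equilibrium of the kinetics and that $a>1>b$ excludes a coexistence equilibrium in the closed first quadrant, so the limit is forced to be $(0,1)$, after which $(\varphi^*,\psi^*)$ is a genuine solution of \eqref{3.2} at $s_0$ and the contradiction follows directly from the \emph{statement} of Lemma \ref{s*=s0}, with no need to repeat the sliding. (This does silently use the standard fact, also used by the paper, that a monotone bounded $C^2$ solution of an autonomous second-order ODE converges to an equilibrium; that deserves one line.) For the subcase $\psi^*\equiv 0$, the paper invokes the squeezing argument of Du--Ma \cite{DMa} with the compactly supported profiles $u_R$ to conclude $\varphi^*\equiv 1$ directly, while you first restrict $\varphi^*(+\infty)$ to $\{0,1\}$ and then eliminate $0$ via the energy identity $s_0\int_\R(\varphi^{*\prime})^2=-1/6$; both are correct, and the energy identity is a nice, elementary alternative that makes the ``wrong sign of $s_0$'' obstruction transparent. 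Minor points to tighten in a write-up: (1) for an arbitrary (not monotone) sequence $s_n\to s_*$ you should mention passing to a monotone subsequence and then invoking the subsequence-of-subsequences principle, as you essentially do; (2) the strong maximum principle for the cooperative system for $(-\varphi^{*\prime},\psi^{*\prime})$ needs the observation that $(-\varphi^{*\prime},\psi^{*\prime})\not\equiv(0,0)$ (which follows from the boundary conditions) and that $\psi^*>0$ on $(0,\infty)$ when $\psi^*\not\equiv 0$ (by ODE uniqueness), so that the coupling is nondegenerate there.
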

\begin{proof}
Let $\{s_n\}$ be an arbitrary sequence contained in $[0, s_0)$ satisfying $\lim_{n\to\infty} s_n=\hat s\in [0, s_0]$. We first consider the case $\hat s<s_0$. We want to show that $\{(\vp_{s_n}, \psi_{s_n})\}$ has a subsequence that converges to
$(\vp_{\hat s}, \psi_{\hat s})$ in $C_{loc}^2(\mathbb R)\times C^2_{loc}([0,+\infty))$. The required continuity of the map $s\mapsto (\vp_s, \psi_s)$ is clearly a consequence of this conclusion.

Fix $s^1\in (\hat s, s_0)$.
By passing to a subsequence we may assume that $0\leq s_n\leq s^1$ for all $n\geq 1$. We thus obtain, by Lemma \ref{lem-s1-s2},
\[
\vp_0\leq \vp_{s_n}\leq \vp_{s^1} \; (\forall \xi\in \mathbb R),\; \psi_0\geq \psi_{s_n}\geq \psi_{s^1} \ (\forall \xi\geq 0).
\]
By standard $L^p$ regularity and the Sobolev embedding theorem,  we may assume that, subject to passing to a subsequence,
\[
(\vp_{s_n}, \psi_{s_n})\to (\hat\vp,\hat\psi) \mbox{ in $C_{loc}^1(\mathbb R)\times C^1_{loc}([0,+\infty))$ as $n\to\infty$.} 
\]
The Schauder theory then infers that the above convergence also holds in $C_{loc}^2(\mathbb R)\times C^2_{loc}([0,+\infty))$. Moreover, $(\hat\vp,\hat\psi)$ solves \eqref{3.2} with $s=\hat s$, except that we only have
$\hat\vp'\leq 0$ and $\hat\psi'\geq 0$. We note that the required asymptotic behavior of $(\hat\vp, \hat\psi)$ at $\xi=\pm\infty$ follows from
\[
\vp_0\leq \hat\vp\leq \vp_{s^1},\; \psi_0\geq \hat\psi\geq \psi_{s^1}.
\]
Applying the strong maximum principle to the system satisfied by $(-\hat\vp', \hat\psi')$, we deduce $\hat\vp'<0$
in $\mathbb R$ and $\hat\psi'>0$ in $[0, +\infty)$. Thus $(\hat\vp, \hat\psi)$ is a solution of \eqref{3.2} with $s=\hat s$. By uniqueness, we necessarily have $(\hat\vp, \hat\psi)=(\vp_{\hat s}, \psi_{\hat s})$. Clearly this implies the required continuity of the map $s\mapsto (\vp_s, \psi_s)$.

We next consider the case $\hat s=s_0$ and prove \eqref{s-s0}. In this case we have
\[
\vp_0\leq \vp_{s_n}\leq 1 \; (\forall \xi\in \mathbb R),\; \psi_0\geq \psi_{s_n}\geq 0 \ (\forall \xi\geq 0).
\]
Hence we may repeat the above argument to conclude that, subject to passing to a subsequence,
\[
(\vp_{s_n}, \psi_{s_n})\to (\vp^0,\psi^0) \mbox{ in $C_{loc}^2(\mathbb R)\times C^2_{loc}([0,+\infty))$ as $n\to\infty$.} 
\]
and $(\vp^0,\psi^0)$ solves \eqref{3.2} with $s=s_0$, except that we only have
\[
(\vp^0)'\leq 0,\; 
\vp_0\leq \vp^0\leq 1 \; (\forall \xi\in \mathbb R);\;  (\psi^0)'\geq 0,\; \psi_0\geq\psi^0\geq 0 \ (\forall \xi\geq 0).
\]

If $\psi^0\equiv 0$, then $\vp^0$ satisfies $0<\vp_0\leq \vp^0\leq 1$ and
\[
-(\vp^0)''\geq \vp^0(1-\vp^0) \mbox{ for }\xi\in \mathbb R.
\]
For large $R>1$, let $u_R$ be the unique positive solution of 
\[
-u''=u(1-u) \mbox{ in } (-R, R),\; u(-R)=u(R)=0.
\]
It is well known that (see \cite{DMa}) $u_R\to 1$ in $C^2_{loc}(\mathbb R)$ as $R\to+\infty$. By Lemma 2.1 of
\cite{DMa}, we have $1\geq \vp^0\geq u_R$ in $[-R, R]$. Letting $R\to\infty$ we obtain $\vp^0\equiv 1$, as we wanted.

Next we show that $\psi^0\not\equiv 0$ leads to a contradiction. In such a case, by monotonicity, we have $\psi^0(+\infty)\in (0, 1]$.
If $\vp^0(+\infty)=0$,  then from the differential equation for $\psi^0$ we deduce $\psi^0(+\infty)=1$, and so $(\vp^0,\psi^0)(+\infty)=(0, 1)$.
Let $(\Phi_0, \Psi_0)$ be a solution of \eqref{3.1} with $s=s_0$. Then we can repeat the 
sliding method in the proof of Lemma \ref{s*=s0} to deduce that
\[
\psi^0(\xi)\leq \Psi_0(\xi+k) \mbox{ for } \xi\geq 0,\; k\in\mathbb R.
\]
Letting $k\to-\infty$ we obtain $\psi^0\leq 0$, which is a contradiction.  

If $\vp^0(+\infty)\in (0, 1]$, then from the differential equation for $\psi^0$ necessarily 
\[
\psi^0(+\infty)=1-b\vp^0(+\infty)\in (0, 1).
\]
Hence
\bes\lbl{s0-infty}
\vp^0(+\infty)>\Phi_0(+\infty),\;\; \psi^0(+\infty)<\Psi_0(+\infty).
\ees
In such a case we may use the sliding method in the proof of Lemma \ref{s*=s0} again (with obvious simplifications because the comparison of $(\vp^0,\psi^0)$ with 
$(\Phi_0(\cdot+k),\Psi_0(\cdot+k))$ near $\xi=+\infty$ can be carried out by \eqref{s0-infty} now), to deduce $\psi^0\leq 0$. So we also arrive at a contradiction. 
Thus only $(\vp^0,\psi^0)\equiv (1,0)$ is possible, which implies \eqref{s-s0}. The proof is complete.
\end{proof}

\begin{lem}\lbl{lem-s_mu}
For any $\mu>0$, there exists a unique $s=s_\mu\in (0, s_0)$ such that
\bes\lbl{s_mu}
\mu \psi'_{s_\mu}(0)=s_\mu. 
\ees
Moreover,
\bes\lbl{mu-infty}
\mu\mapsto s_\mu \mbox{ is strictly increasing and } \lim_{\mu\to+\infty}s_\mu=s_0.
\ees
\end{lem}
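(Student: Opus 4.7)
The plan is to reduce the problem to studying the scalar function
\[
F_\mu(s) := \mu\psi_s'(0) - s, \qquad s \in [0, s_0),
\]
and showing it has a unique zero in $(0, s_0)$, which will be exactly $s_\mu$. All the heavy analytic work has already been done in Lemmas \ref{map} and \ref{lem-s1-s2}; what remains is essentially a continuity and monotonicity argument.

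For existence and uniqueness of $s_\mu$, I will argue that $F_\mu$ is continuous on $[0, s_0)$ by Lemma \ref{map}. At $s = 0$, it is positive since $\psi_0'(0) > 0$ (the strict positivity of $\psi_s'(0)$ being part of the defining property of solutions of \eqref{3.2}), so $F_\mu(0) = \mu\psi_0'(0) > 0$. As $s \to s_0^-$, formula \eqref{s-s0} in Lemma \ref{map} forces $\psi_s'(0) \to 0$, and hence $F_\mu(s) \to -s_0 < 0$. The intermediate value theorem then supplies some $s_\mu \in (0, s_0)$ with $F_\mu(s_\mu) = 0$. Uniqueness is immediate from Lemma \ref{lem-s1-s2}, which gives that $s \mapsto \psi_s'(0)$ is strictly decreasing, hence so is $F_\mu$.

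Next, I will establish the strict monotonicity of $\mu \mapsto s_\mu$ by a simple substitution. For $0 < \mu_1 < \mu_2$, evaluating $F_{\mu_2}$ at $s = s_{\mu_1}$ yields
\[
F_{\mu_2}(s_{\mu_1}) = \mu_2 \psi_{s_{\mu_1}}'(0) - s_{\mu_1} = (\mu_2 - \mu_1)\psi_{s_{\mu_1}}'(0) > 0,
\]
using the identity $\mu_1\psi_{s_{\mu_1}}'(0) = s_{\mu_1}$ and the fact that $\psi_{s_{\mu_1}}'(0) > 0$. Since $F_{\mu_2}$ is strictly decreasing with unique zero at $s_{\mu_2}$, it follows that $s_{\mu_2} > s_{\mu_1}$.

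Finally, to compute $\lim_{\mu \to \infty} s_\mu$, observe that since $\mu \mapsto s_\mu$ is increasing and bounded above by $s_0$, the limit $s_\infty := \lim_{\mu\to\infty} s_\mu \leq s_0$ exists. I will rule out $s_\infty < s_0$ by contradiction: in that case, the continuity from Lemma \ref{map} together with Lemma \ref{lem-s1-s2} yields $\psi'_{s_\mu}(0) \to \psi'_{s_\infty}(0) > 0$, which would force $s_\mu = \mu\,\psi'_{s_\mu}(0) \to +\infty$, contradicting $s_\mu \leq s_0$. Hence $s_\infty = s_0$. No step here is truly delicate—the entire argument rests on the continuity and strict monotonicity of $s \mapsto \psi_s'(0)$, both already established, together with the boundary behavior $\psi_s'(0) \to 0$ as $s \to s_0$. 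The only point requiring mild attention is justifying $\psi_s'(0) > 0$ throughout $[0, s_0)$, but this is built into the very definition of solutions to \eqref{3.2}.
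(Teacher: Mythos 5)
Your proposal is correct and follows essentially the same route as the paper: the paper also works with the function $\eta_\mu(s)=\mu\psi_s'(0)-s$, uses Lemmas \ref{lem-s1-s2} and \ref{map} for its strict monotonicity and continuity, the limit \eqref{s-s0} for the sign at $s_0^-$, and the monotonicity of $\eta_\mu$ in $\mu$ for the monotonicity and limit of $s_\mu$. Your contradiction argument for $\lim_{\mu\to\infty}s_\mu=s_0$ is a trivial variant of the paper's direct estimate and rests on the same facts.
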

\begin{proof}
By Lemmas \ref{lem-s1-s2} and \ref{map},  for fixed $\mu>0$, the function $\eta_\mu(s):= \mu\psi_s'(0)-s$ is continuous and strictly decreasing
for $s\in [0, s_0)$. By \eqref{s-s0}, $\eta_\mu(s_0-0)=-s_0<0$. Clearly $\eta_\mu(0)=\mu\psi'_0(0)>0$. Therefore there exists a
unique $s=s_\mu\in (0, s_0)$ such that $\eta_\mu(s_\mu)=0$. This proves the first part of the lemma.

For fixed $s\in [0, s_0)$, $\eta_\mu(s)$ is strictly increasing in $\mu$. It follows that $s_\mu$ is strictly increasing in $\mu$.
Finally, for any $\epsilon>0$ and $s\in [0, s_0-\epsilon]$, we have $\eta_\mu(s)\geq \eta_\mu(s_0-\epsilon)\to+\infty$ as $\mu\to+\infty$.
It follows that $s_0-\epsilon<s_\mu<s_0$ for all large $\mu$. Clearly this implies $\lim_{\mu\to+\infty}s_\mu=s_0$.
\end{proof}

\begin{rem}\label{remark1}{\rm
Using the change of variables described in Section 1, which reduces \eqref{f1} to \eqref{p1}, we can
apply Theorem \ref{th3.1} to obtain a parallell result for the general case
\bes
\left\{
\begin{array}{ll}
 s\varphi'-d_2\varphi''=\varphi(a_2-b_2\psi -c_2\varphi)\ \ \ \ \ \ (\forall \xi\in\mathbb{R}),\\[1mm]
 s\psi'-d_1\psi''=\psi(a_1-b_1\psi-c_1\varphi)\ \ \ \ \ \ (\forall \xi>0),\\[1mm]
\varphi(-\infty)=\frac{a_2}{c_2},\; \varphi'<0\  (\forall\xi\in\mathbb R),\;  \vp(+\infty)=0,\\[1mm]
\psi\equiv 0 \ (\forall \xi\leq 0),\  \psi'>0 \  (\forall \xi\geq 0), \ \psi(+\infty)=\frac{a_1}{b_1},
 \end{array}\right.
\lbl{g-semi-wave}
\ees
when \eqref{sup} holds. Using the uniqueness of the semi-wave, it is easy to show that the unique value $s=s_\mu$ determined by
\[
\mu\psi'_s(0)=s
\]
 depends continuously on the parameters $\mu$ and  $a_i, b_i, c_i, d_i$, $i=1,2$. This observation will be used in the next section.
}
\end{rem}

\section{Spreading speed}
\setcounter{equation}{0}

In this section, we prove Theorem \ref{main}. We always assume that the conditions of Theorem \ref{main} holds.
We first prove

\begin{lem}\label{lem3.1}
\begin{equation}\label{u-bd}
\limsup_{t\to+\infty}\frac{h(t)}{t}\leq s_\mu.
\end{equation}
\end{lem}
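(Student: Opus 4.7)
The approach is to dominate \eqref{p1} by a traveling profile built from the semi-wave of a slightly faster speed than $s_\mu$, and then conclude via a coupled comparison principle for competition systems. Fix $\varepsilon > 0$ small. By Theorem \ref{th3.1}(iii) the map $\mu \mapsto s_\mu$ is strictly increasing with $s_\mu \to s_0$ as $\mu \to \infty$, so one can pick $\tilde\mu > \mu$ with $s_\mu < s_{\tilde\mu} < s_\mu + \varepsilon < s_0$. Let $(\bar\varphi, \bar\psi) := (\varphi_{s_{\tilde\mu}}, \psi_{s_{\tilde\mu}})$ be the corresponding semi-wave, so that $\tilde\mu\bar\psi'(0) = s_{\tilde\mu}$.

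For a reference time $t_0$ and shift $K > 0$ to be chosen, I would set
\[
\bar h(t) := K + s_{\tilde\mu}(t - t_0), \qquad \bar u(t,r) := \bar\psi(\bar h(t) - r) \text{ on } [0,\bar h(t)], \qquad \bar v(t,r) := \bar\varphi(\bar h(t) - r) \text{ on } [0,\infty).
\]
Using the semi-wave ODEs \eqref{3.2} together with $\Delta = \partial_{rr} + \tfrac{N-1}{r}\partial_r$, a direct computation yields
\[
\bar u_t - d\Delta \bar u - r\bar u(1 - \bar u - b\bar v) = d\,\tfrac{N-1}{r}\,\bar\psi'(\bar h(t) - r) \ge 0,
\]
\[
\bar v_t - \Delta \bar v - \bar v(1 - \bar v - a\bar u) = \tfrac{N-1}{r}\,\bar\varphi'(\bar h(t) - r) \le 0,
\]
since $\bar\psi' \ge 0$ and $\bar\varphi' \le 0$. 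Moreover $\bar u(t,\bar h(t)) = 0$ and, because $\tilde\mu > \mu$,
\[
\bar h'(t) = s_{\tilde\mu} = \tilde\mu\bar\psi'(0) = -\tilde\mu\bar u_r(t,\bar h(t)) \ge -\mu\bar u_r(t,\bar h(t)).
\]
Thus $(\bar u, \bar v, \bar h)$ provides a supersolution triple for \eqref{p1} under the competition ordering $u_1 \le u_2$, $v_1 \ge v_2$.

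The main obstacle is matching the initial data at $t = t_0$: one needs $K \ge h(t_0)$, $\bar u(t_0,\cdot) \ge u(t_0,\cdot)$ on $[0,h(t_0)]$, and $\bar v(t_0,\cdot) \le v(t_0,\cdot)$ on $[0,\infty)$. The first two conditions are easily secured by taking $K$ large, exploiting $\bar\psi(\xi) \to 1$ as $\xi \to +\infty$ and the uniform boundedness of $u$. The third is delicate, since $\bar v(t_0, r) \to 1$ as $r \to +\infty$, whereas $v(t_0, r)$ may tend to a value strictly less than $1$. I would overcome this by first taking $t_0$ large enough that, combining hypothesis \eqref{v_0} with the fact that $u \equiv 0$ for $r > h(t)$ (where $v$ is governed by the scalar KPP dynamics $v_t - \Delta v = v(1-v)$), standard parabolic comparison forces $v(t_0, r) \ge 1 - \delta$ uniformly on a sufficiently large $r$-range; then choosing $K$ very large, the exponential decay of $1 - \bar\varphi(\xi)$ as $\xi \to -\infty$ (inherited from the stable manifold at the saddle critical point $(\varphi,\psi) = (1,0)$) beats the worst-case rate of $1-v(t_0,\cdot)$ in the tail, securing $\bar\varphi(K-r) \le v(t_0,r)$ for all $r \ge 0$.

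Once the initial comparison is in place, the coupled comparison principle for the competition free boundary problem \eqref{p1} (an adaptation of the arguments of \cite{DLin1}) yields $h(t) \le \bar h(t) = K + s_{\tilde\mu}(t - t_0)$ for all $t \ge t_0$. Dividing by $t$ and sending $t \to +\infty$ gives $\limsup_{t \to +\infty} h(t)/t \le s_{\tilde\mu} \le s_\mu + \varepsilon$, and the arbitrariness of $\varepsilon$ completes the proof.
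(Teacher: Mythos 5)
Your strategy --- bounding $h(t)$ above by a traveling front built from a semi-wave of a slightly larger speed, then invoking the coupled comparison principle of \cite{DLin1} --- is the right framework, and you correctly pinpoint the crux: $\bar v$ must lie below the true $v$ at the initial time, yet $\bar\varphi(\xi)\to 1$ as $\xi\to-\infty$, while \eqref{v_0} gives no control pushing $v(t_0,\cdot)$ up to $1$. Your remedy, however, does not close this gap. You propose choosing $t_0$ so that $v(t_0,r)\geq 1-\delta$ on a large range, then choosing $K$ so large that the exponential decay of $1-\bar\varphi(\xi)$ as $\xi\to-\infty$ ``beats the worst-case rate of $1-v(t_0,\cdot)$ in the tail.'' But $1-v(t_0,\cdot)$ need not decay at all as $r\to\infty$: if $\liminf_{r\to\infty}v_0(r)=\sigma_0<1$, then for any fixed $t_0$, $v(t_0,r)$ can stay near a height $V(t_0)<1$ (where $V$ solves the logistic ODE with $V(0)=\sigma_0$) along a sequence $r\to\infty$. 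Since $\bar\varphi(K-r)\to 1$ as $r\to\infty$ for every fixed $K$, there are then arbitrarily large $r$ with $\bar\varphi(K-r)>v(t_0,r)$, so the required ordering $\bar v(t_0,\cdot)\leq v(t_0,\cdot)$ cannot hold, however large $K$ is. Replacing $\mu$ by $\tilde\mu>\mu$ changes the speed but leaves $\bar\varphi(-\infty)=1$ untouched, so it does nothing to fix this.

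The paper's fix is to work with a $\delta$-perturbed semi-wave system, namely \eqref{delta-sw}, whose $\varphi$-component satisfies $\varphi^\delta(-\infty)=1-\delta$ rather than $1$. Then $\underline v(0,r)=\varphi^\delta(R-r)<1-\delta\leq v(T,r)$ for $r\geq M$ holds automatically, and for $r\in[0,M]$ one simply takes the shift $R$ large so that $\varphi^\delta(R-M)<\inf_{[0,M]}v(T,\cdot)$. The perturbed speed $s_\mu^\delta$ converges to $s_\mu$ as $\delta\to0$, so the sharp bound is recovered in the limit. Your construction should therefore be re-based on such a perturbed profile rather than the unperturbed semi-wave $(\varphi_{s_{\tilde\mu}},\psi_{s_{\tilde\mu}})$; once that change is made, the remaining steps you sketch (differential inequalities in the radial setting, the free boundary inequality $\bar h'\geq-\mu\bar u_r$, and the final $\limsup$ argument) go through as intended.
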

\begin{proof}
For clarity we divide the proof into two steps.

{\bf Step 1}. We show that for any given small $\delta>0$ and large $T_0>0$, there exist positive constants $T$ and $M$ such that
\[ T>T_0, \;\; v(T,r)\geq 1-\delta \;\; (\forall r\geq M).
\]
Choose a constant $\sigma_0$ satisfying 
\[
0<\sigma_0<\liminf_{r\to+\infty} v_0(r), 
\]
and then consider the auxiliary problem
\begin{equation}
\label{w}
\left\{\begin{array}{ll}
w_t-w_{rr}=w(1-w), & t>0,  r>0,\\[1mm]
w=0, & t>0, r=0,\\[1mm]
w=\sigma_0, & t=0, r>0.
\end{array}
\right.
\end{equation}
It is easily seen that the unique solution of \eqref{w} has the property that
\[
\lim_{t\to+\infty} w(t,r)=w_*(r) \mbox{ locally uniformly in } r\in [0, +\infty),
\]
where $w_*(r)$ is the unique positive solution of 
\[
-w_*''=w_*(1-w_*) \;\; (r>0), \;\; w_*(0)=0.
\]
Moreover, $w_*'(r)>0$ and $w_*(+\infty)=1$. Therefore we can find $M_1>0$ and $T>T_0$ such that
\[
w(t, M_1)\geq w_*(M_1)-\delta/2\geq 1-\delta \;\; (\forall t\geq T).
\]

Applying the maximum principle to the equation satisfied by $\partial_r w(t,r)$, we deduce
$\partial_r w(t,r)\geq 0$ for $t>0$ and $r>0$. It follows that
\[
w(t, r)\geq 1-\delta \;\; (\forall t\geq T,\;\forall r\geq M_1).
\]

By the choice of $\sigma_0$, there exists $M_2>0$ such that $v_0(r)>\sigma_0$ for $ r\geq M_2$.
Set 
\[
M_3:=\max\{M_2, h(T)\}.
\]
Then for $t\in [0, T]$ we have $h(t)\leq M_3$, and hence $v$ satisfies
\[
v_t-\Delta v=v(1-v) \;\; (0<t\leq T,\; r>M_3),\;\; v(0, r)>\sigma_0 \;\; (r>M_3).
\]

Define
\[
\tilde w(t,r):= w(t, r-M_3).
\]
Then we have
\[
\tilde w_t-\tilde w_{rr}-\frac{N-1}{r}\tilde w_r\leq \tilde w_t-\tilde w_{rr}=\tilde w(1-\tilde w) \;\; (0<t\leq T,\; r>M_3).
\]
Since 
\[
\tilde w(t, M_3)=0<v(t, M_3) \;\; (\forall t>0),\; \tilde w(0, r)=\sigma_0<v(0, r)\;\; (\forall r>M_3),
\]
we can use the comparison principle to deduce
\[
v(t,r)\geq \tilde w(t, r) =w(t, r-M_3) \;\; (\forall 0<t\leq T,\;\forall r>M_3).
\]
Thus we may set 
\[ M:=M_1+M_3,
\]
and obtain
\[
v(T, r)\geq w(T, r-M_3)\geq w(T, M_1)\geq 1-\delta \; (\forall r\geq M).
\]
This completes the proof of Step 1.
\smallskip

{\bf Step 2}.  Proof of \eqref{u-bd} by the construction of a comparison triple $(\overline u(t,r),\underline v(t,r), \overline h(t))$.

By the comparison principle, we easily see that
\[
u(t,r)\leq U(t) \;\; (\forall t>0,\;\forall r\in [0, h(t)]),
\]
where $U(t)$ is the unique solution of
\[
U'=rU(1-U) \; (t>0),\;\; U(0)=\|u_0\|_\infty.
\]
Since $U(+\infty)=1$, for any given small $\delta>0$ 
we can find $T_0>0$ such that
\[
U(t)\leq 1+\delta \;\; (\forall t\geq T_0).
\]
It follows that
\[
u(t, r)\leq 1+\delta\; \; (\forall t\geq T_0,\;\forall r\in [0, h(t)]).
\]
By Step 1, we can find $T>T_0$ and $M>0$ such that
\[
v(T, r)\geq 1-\delta \;\; (\forall r\geq M).
\]
We thus obtain
\begin{equation}\label{t=T}
u(T, r)\leq 1+\delta \; \; (\forall r\in [0, h(T)],\;\; v(T, r)\geq 1-\delta  \;\; (\forall r\geq M).
\end{equation}

We now consider the auxiliary problem
 \bes\left\{\begin{array}{ll}
 s\varphi'-\varphi''=\varphi(1-\delta-\varphi-a\psi)\ \ \ \ \ \ (\forall \xi\in\mathbb{R}),\\[1mm]
 s\psi'-d\psi''=r\psi(1+2\delta-\psi-b\varphi)\ \ \ (\forall \xi>0),\\[1mm]
\varphi(-\infty)=1-\delta,\; \varphi'<0\  (\forall\xi\in\mathbb R),\;  \vp(+\infty)=0,\\[1mm]
\psi\equiv 0 \ (\forall \xi\leq 0),\  \psi'>0 \  (\forall \xi\geq 0), \ \psi(+\infty)=1+2\delta,
 \end{array}\right.\lbl{delta-sw}
\ees
Since $a>1>b$ and $\delta>0$ is small, by Remark \ref{remark1}, there exists a unique $s=s_\mu^\delta>0$ such that
for this value of $s$, \eqref{delta-sw} has a unique solution $(\vp^\delta,\psi^\delta)$, and
\[
\mu (\psi^\delta)'(0)=s_\mu^\delta,\;\;\; \lim_{\delta\to 0}s_\mu^\delta=s_\mu.
\]
Using $\varphi^\delta(+\infty)=0$ and $\psi^\delta(+\infty)=1+2\delta$, we can find $R>h(T)$ large so that
\bes\lbl{R}
\psi^\delta(R-h(T))>1+\delta,\;\; \varphi^\delta(R-M)<\inf_{r\in [0, M]} v(T, r).
\ees
We now define
\bess
\overline h(t):&=&s_\mu^\delta t+R,\;  \\
\overline u(t,r):&=&\psi^\delta(\overline h(t)-r),\\
 \underline v(t,r):&=&\varphi^\delta(\overline h(t)-r).
\eess
Clearly $\overline h(0)>h(T)$ and
\[
\overline h'(t)=s_\mu^\delta=\mu (\psi^\delta)'(0)=-\mu \overline u_r(t, \overline h(t))\;\; (\forall t>0).
\]
 Moreover,  by \eqref{t=T} and \eqref{R}
we have
\bess
\overline u(0, r)&=&\psi^\delta(R-r)\geq \psi^\delta (R-h(T)) >1+\delta\geq u(T, r) \;\; (\forall r\in [0, h(T)]),\\
\underline v(0,r)&=&\varphi^\delta(R-r)\leq \varphi^\delta(R-M)<v(T, r)\;\; (\forall r\in [0, M]).
\eess
For $r>M$, by \eqref{t=T},
\[
\underline v(0,r)=\vp^\delta(R-r)<\vp^\delta(-\infty)=1-\delta\leq v(T, r).
\]
Thus
\[
\underline v(0, r)<v(T, r) \;\; (\forall r\geq 0).
\]
Furthermore,
\bess
\overline u_t-d\Delta \overline u&=&s_\mu^\delta (\psi^\delta)'-d(\psi^\delta)''+d\frac{N-1}{r}(\psi^\delta)'\\
&\geq& r\psi^\delta(1+2\delta-\psi^\delta-b\vp^\delta)\\
&\geq & r\overline u(1-\overline u-b\underline v).
\eess
\bess
\underline v_t-\Delta \underline v&=& s_\mu^\delta(\vp^\delta)'-(\vp^\delta)''+\frac{N-1}{r}(\vp^\delta)'\\
&\leq &\vp^\delta(1-\delta-\vp^\delta-a\psi^\delta)\\
&\leq& \underline v(1-\underline v-a\overline u).
\eess
Finally we note that
\[
\overline u_r(t,0)=-(\psi^\delta)'(\overline h(t))<0,\;\; \underline v_r(t,0)=-(\vp^\delta)'(\overline h(t))>0.
\]
Hence we can apply the comparison principle in \cite{DLin1} (see Lemma 2.6 and Remark 2.7 there), to conclude that
\[
h(t+T)\leq \overline h(t) \;\; (\forall t>0).
\]
It follows that
\[
\limsup_{t\to+\infty}\frac{h(t)}{t}\leq s_\mu^\delta.
\]
Letting $\delta\to 0$ we obtain
\[
\limsup_{t\to+\infty}\frac{h(t)}{t}\leq s_\mu.
\]
\end{proof}

\begin{lem}\label{lem3.2}
\begin{equation}\label{l-bd}
\liminf_{t\to+\infty}\frac{h(t)}{t}\geq s_\mu.
\end{equation}
\end{lem}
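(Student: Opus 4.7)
The plan is to mirror the strategy of Lemma \ref{lem3.1}, this time constructing a \emph{sub-solution} triple $(\underline u,\bar v,\underline h)$ whose free boundary $\underline h(t)$ grows at a speed approaching $s_\mu$ from below. By the spreading hypothesis, $(u,v)(t,\cdot)\to(1,0)$ in $L^\infty_{\rm loc}([0,\infty))$, so for any prescribed small $\eta>0$ and large $L>0$ one can pick $T$ large enough that $u(T,r)\geq 1-\eta$, $v(T,r)\leq \eta$ for $r\in[0,L]$, and $h(T)\geq L$. This furnishes the lower-bound analogue of \eqref{t=T}.

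Playing the role of \eqref{delta-sw}, I would introduce the \emph{dually perturbed} semi-wave system
\[
\left\{\begin{array}{l}
s\vp'-\vp''=\vp(1+\delta-\vp-a\psi),\ \xi\in\R,\\[1mm]
s\psi'-d\psi''=r\psi(1-\delta-\psi-b\vp),\ \xi>0,\\[1mm]
\vp(-\infty)=1+\delta,\ \vp(+\infty)=0,\\[1mm]
\psi\equiv 0\ \text{on}\ (-\infty,0],\ \psi(+\infty)=1-\delta,
\end{array}\right.
\]
which strengthens the native and weakens the invader. By Remark \ref{remark1}, for small $\delta>0$ this has a unique solution $(\tilde\vp^\delta,\tilde\psi^\delta)$ with speed $s_\mu^\delta$ satisfying $\mu(\tilde\psi^\delta)'(0)=s_\mu^\delta$ and $s_\mu^\delta\to s_\mu$ as $\delta\to 0$. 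Fix a small $\epsilon>0$, set $R_0:=(N-1)\max\{d,1\}/\epsilon+1$, and put
\[
\underline h(t)=R_0+L_*+(s_\mu^\delta-\epsilon)(t-T),\quad \underline u(t,r)=\tilde\psi^\delta(\underline h(t)-r),\quad \bar v(t,r)=\tilde\vp^\delta(\underline h(t)-r),
\]
with $L_*,T$ large, to be chosen. Substituting into the PDEs and using the semi-wave identities yields, for $r\geq R_0$,
\[
\underline u_t-d\Delta\underline u-r\underline u(1-\underline u-b\bar v)=-r\delta\tilde\psi^\delta+\left(\tfrac{d(N-1)}{r}-\epsilon\right)(\tilde\psi^\delta)'\leq 0,
\]
and the analogous reversed inequality for $\bar v$; the free-boundary condition holds strictly because $\underline h'(t)=s_\mu^\delta-\epsilon<s_\mu^\delta=-\mu\underline u_r(t,\underline h(t))$. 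Choosing $L_*$ so that $\tilde\psi^\delta(L_*)\leq 1-\delta$, then $\eta$ small relative to $\tilde\vp^\delta(L_*)$, and finally $T$ large via spreading, the initial comparison at $t=T$ and the inner-boundary comparison $\underline u(t,R_0)\leq 1-\delta\leq u(t,R_0)$ for all $t\geq T$ both hold. The running inner-boundary comparison $\bar v(t,R_0)\geq v(t,R_0)$ is obtained from the asymptotic analysis of Step~2 of Lemma \ref{uniqueness}: $\tilde\vp^\delta$ decays like $e^{-|\gamma_1|\xi}$ at $+\infty$ with $|\gamma_1|\,s_\mu^\delta\leq a-1$ (strictly, since $a>1$), whereas $v(t,R_0)$ relaxes to zero at the linearised rate $\approx(a-1)$ once $u$ is locally close to $1$ near $r=R_0$. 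The comparison principle of \cite[Lemma~2.6]{DLin1}, applied on the annulus $\{r\geq R_0\}$, then gives $h(t+T)\geq\underline h(t)$, so $\liminf_{t\to\infty}h(t)/t\geq s_\mu^\delta-\epsilon$; sending $\epsilon\to 0$ and then $\delta\to 0$ finishes the argument.

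The main obstacle is the radial term $(N-1)/r$: a direct substitution of the semi-wave profile into the $N$-dimensional radial Laplacian produces the unwanted positive contribution $d(N-1)/r\,(\tilde\psi^\delta)'$, which spoils the sub-solution inequality for $\underline u$ near the moving boundary, where $\tilde\psi^\delta$ vanishes while its derivative does not. I resolve this through two coupled devices: restricting the construction to the annulus $r\geq R_0$ (so that $d(N-1)/r\leq\epsilon$), and propagating $\underline h$ at the reduced speed $s_\mu^\delta-\epsilon$ (thereby generating the compensating term $-\epsilon(\tilde\psi^\delta)'$ in the identity above). The secondary technical points — carrying the side-boundary comparison for $\bar v$ in time and securing the initial comparison near $r=0$ — are handled by combining the spreading bound with the exponential decay of $\tilde\vp^\delta$ at $+\infty$ established in Section 2.
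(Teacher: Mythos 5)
Your overall architecture coincides with the paper's: the dually perturbed semi-wave system you write down is exactly \eqref{1+delta-sw}, the restriction to the annulus $\{r\geq R_0\}$ with $d(N-1)/R_0$ small together with the reduced front speed are the same devices the paper uses to absorb the radial term $\frac{N-1}{r}$, and the comparison principle invoked is the same. However, there is a genuine gap at the running lateral boundary condition $\overline v(t,R_0)\geq v(t+T,R_0)$ for \emph{all} $t>0$. With the unmodified profile $\vp_\delta$ you have $\overline v(t,R_0)=\vp_\delta(\underline h(t)-R_0)\to 0$ exponentially, at the temporal rate $|\tilde\gamma_1|(s_{\mu}^{\delta}-\epsilon)$, so you must show that $v(\cdot,R_0)$ decays at least this fast. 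Your justification --- that $v$ relaxes at the linearised rate $\approx a-1>|\tilde\gamma_1|\,s_{\mu}^{\delta}$ once $u$ is close to $1$ near $r=R_0$ --- is only a heuristic and cannot be closed with the information available at this stage. The spreading hypothesis gives only \emph{locally uniform} convergence $(u,v)\to(1,0)$, with no rate; to convert this into exponential-in-$t$ decay of $v(t,R_0)$ you would need the region $\{u\geq 1-\eta\}$ to expand linearly in $t$ at a speed large enough that the lateral influx of $v$ (which is of order $1$ at the edge of that region) is filtered out before reaching $r=R_0$ faster than $e^{-|\tilde\gamma_1|(s_{\mu}^{\delta}-\epsilon)t}$. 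That is essentially a quantitative lower bound on the spreading --- the very thing being proved --- so the argument is circular as it stands.

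The paper avoids this issue entirely by first proving Lemma \ref{lem3.3}: it replaces $\vp_\delta$ by a $C^1$ modification $\tilde\vp_\delta$ which agrees with $\vp_\delta$ on $(-\infty,\xi_0]$ but is \emph{constant and strictly positive} for $\xi\geq\xi_1$, while still satisfying the supersolution inequality $s\tilde\vp_\delta'-\tilde\vp_\delta''\geq\tilde\vp_\delta(1-\tilde\vp_\delta-a\psi_\delta)$ in the weak sense. This is possible because $a\psi_\delta(+\infty)=a(1-\delta)>1$ makes the reaction term negative where the profile is flattened, but the interpolation on the transition interval $(\xi_0,\xi_1)$ requires the delicate correction $e^{\gamma(\xi-\xi_0)}-1-\gamma(\xi-\xi_0)$ with $\gamma$ small and the estimate \eqref{3.10}. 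With $\overline v$ built from $\tilde\vp_\delta$, one has $\overline v(t,R_0)\geq\tilde\vp_\delta(\xi_1)>0$ for all $t$, and the lateral comparison follows simply from $v(t+T,R_0)<\tilde\vp_\delta(\xi_1)$ for $T$ large, which is exactly what locally uniform spreading provides. To complete your proof you must either establish an analogue of Lemma \ref{lem3.3} or supply an independent quantitative decay estimate for $v$ at $r=R_0$; the former is the route the paper takes, and the latter does not appear to be available here.
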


To prove Lemma \ref{lem3.2}, we will use solutions of the following problem
\bes\left\{\begin{array}{ll}
 s\varphi'-\varphi''=\varphi(1+\delta-\varphi-a\psi)\ \ \ \ \ \ (\forall \xi\in\mathbb{R}),\\[1mm]
 s\psi'-d\psi''=r\psi(1-\delta-\psi-b\varphi)\ \ \ (\forall \xi>0),\\[1mm]
\varphi(-\infty)=1+\delta,\; \varphi'<0\  (\forall\xi\in\mathbb R),\;  \vp(+\infty)=0,\\[1mm]
\psi\equiv 0 \ (\forall \xi\leq 0),\  \psi'>0 \  (\forall \xi\geq 0), \ \psi(+\infty)=1-\delta,
 \end{array}\right.\lbl{1+delta-sw}
\ees
where $\delta>0$ is small. By Remark \ref{remark1}, there exists a unique $s=s_{\mu,\delta}>0$ such that
for this value of $s$, \eqref{1+delta-sw} has a unique solution $(\vp_\delta,\psi_\delta)$, and
\[
\mu \psi_\delta'(0)=s_{\mu,\delta},\;\;\; \lim_{\delta\to 0}s_{\mu,\delta}=s_\mu.
\]
 However, unlike in the proof of Lemma \ref{lem3.1}, here we need to modify $\vp_\delta$ first before we can use it
and $\psi_\delta$ to construct suitable comparison functions to prove \eqref{l-bd}.

\begin{lem}
\lbl{lem3.3}
For every large $\xi_0>0$, there exist a constant $\xi_1=\xi_1(\xi_0)>\xi_0$ and a function $\tilde\vp_\delta=\tilde\vp_{\delta,\xi_0}\in C^1(\R)$ such that
\bess
&&\limsup_{\xi_0\to+\infty}\left[\xi_1(\xi_0)-\xi_0\right]<+\infty,\; \tilde\vp_\delta'\leq 0,\; \tilde\vp_\delta\geq\vp_\delta \; (\forall \xi\in\R),\\
&&\tilde\vp_\delta\equiv \vp_\delta \; (\forall \xi\leq \xi_0),\;\; \tilde\vp_\delta\equiv\tilde\vp_\delta(\xi_1)>0\; (\forall \xi\geq \xi_1),\\
&&s\tilde\vp_\delta-\tilde\vp_\delta''\geq \tilde\vp_\delta(1-\tilde\vp_\delta-a\psi_\delta) \; (\forall\xi\in\R, \mbox{ in the weak sense}),\\
&&s\psi_\delta-d\psi_\delta''\leq r\psi_\delta(1-\psi_\delta-b\tilde\vp_\delta) \; (\forall\xi>0).
\eess
\end{lem}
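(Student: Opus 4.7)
The goal of Lemma 3.3 is to ``chop off'' the exponentially decaying tail of $\vp_\delta$ at $+\infty$ and replace it with a small positive constant plateau $c>0$, so that later $\tilde\vp_\delta$ can be used in the comparison argument of Lemma \ref{lem3.2} without degenerating to $0$ at spatial infinity. The natural construction is: leave $\vp_\delta$ untouched on $(-\infty,\xi_0]$, set $\tilde\vp_\delta\equiv c$ on $[\xi_1,+\infty)$, and interpolate smoothly on $[\xi_0,\xi_1]$.

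First I would fix the constants. Set $c=c(\xi_0):=\tfrac{1}{2}\vp_\delta(\xi_0)$; since $\vp_\delta(\xi_0)\to 0$ as $\xi_0\to+\infty$, we have $c>0$ and in particular $c<\delta/b$ for $\xi_0$ large. By the same asymptotic analysis as in Step 2 of Lemma \ref{uniqueness} (applied at $\xi=+\infty$ with $a$ in place of the earlier constants, giving the characteristic exponent $\gamma_1=\tfrac{s-\sqrt{s^2+4(a-1)}}{2}<0$), one has $\vp_\delta(\xi)=C_\vp e^{\gamma_1\xi}(1+o(1))$ and $\vp_\delta'(\xi_0)/\vp_\delta(\xi_0)\to\gamma_1$, so both $\vp_\delta(\xi_0)$ and $|\vp_\delta'(\xi_0)|$ scale consistently. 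Pick a fixed $L>0$ (independent of $\xi_0$), set $\xi_1:=\xi_0+L$, and define $\tilde\vp_\delta$ on $[\xi_0,\xi_1]$ as a $C^1$ Hermite-type interpolant matching $(\vp_\delta(\xi_0),\vp_\delta'(\xi_0))$ at $\xi_0$ and $(c,0)$ at $\xi_1$ (a quintic polynomial matching second derivatives as well makes the bookkeeping cleaner, since then $\tilde\vp_\delta''\equiv 0$ on $[\xi_1,+\infty)$). The resulting $\tilde\vp_\delta$ is globally $C^1$ and $\xi_1(\xi_0)-\xi_0=L$ is uniformly bounded.

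Next I would verify the stated properties piecewise. Monotonicity and $\tilde\vp_\delta\geq\vp_\delta$ on $\R$ follow from a direct calculation on the interpolant together with $c\geq\vp_\delta(\xi_1)$ for $\xi\geq\xi_1$. The second required inequality (for $\psi_\delta$) is immediate from $0\leq\tilde\vp_\delta-\vp_\delta\leq c<\delta/b$: subtracting this from the equation $s\psi_\delta'-d\psi_\delta''=r\psi_\delta(1-\delta-\psi_\delta-b\vp_\delta)$ gives the desired bound. For the supersolution inequality on $\tilde\vp_\delta$: on $(-\infty,\xi_0]$ it follows from the equation of $\vp_\delta$ (which carries the extra slack $\delta\vp_\delta\geq 0$); on $[\xi_1,+\infty)$, where $\tilde\vp_\delta\equiv c$, it reduces to $\psi_\delta(\xi)\geq(1-c)/a$, which holds once $\xi_1$ exceeds a fixed threshold because $\psi_\delta(+\infty)=1-\delta>1/a$ for small $\delta$.

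The main obstacle is the transition interval $(\xi_0,\xi_1)$, where both sides of the inequality are of comparable size $O(\vp_\delta(\xi_0))$, so mere smallness is not enough. Here one exploits that $1-a\psi_\delta(\xi)\leq -(a-1)+O(\delta)$ is uniformly negative for $\xi_0$ large, together with the algebraic identity $\gamma_1^2-s\gamma_1=a-1$ coming from the characteristic equation of the linearization at $(\vp,\psi)=(0,1-\delta)$. Writing the inequality in the rescaled variable $t=(\xi-\xi_0)/L$ and factoring out $\vp_\delta(\xi_0)$, one must choose $L$ small enough that the ``artificial'' curvature $\tilde\vp_\delta''=O(\vp_\delta(\xi_0)/L^2)$ introduced by the interpolation is dominated by the stabilizing term $-(a-1)\tilde\vp_\delta$; balancing these two orders is precisely where the particular shape of the interpolant and the fine tuning of $L$ relative to $|\gamma_1|$ enter. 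Once this calibration is carried out, the required weak differential inequality holds throughout, completing the proof.
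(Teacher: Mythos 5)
Your overall strategy matches the paper's: leave $\vp_\delta$ alone on $(-\infty,\xi_0]$, flatten it to a small positive constant on $[\xi_1,\infty)$, bridge smoothly in between, and verify the two differential inequalities piecewise. The easy pieces are handled correctly (the $\psi_\delta$-inequality via $\|\tilde\vp_\delta-\vp_\delta\|_\infty<\delta/b$ for large $\xi_0$, the constant plateau via $a\psi_\delta(\xi_1)>1-c$, the interval $(-\infty,\xi_0]$ via the built-in slack $\delta\vp_\delta$). Where you and the paper diverge is the choice of bridge: the paper adds an exponential perturbation $\vp_\delta(\xi_0)\bigl[e^{\gamma(\xi-\xi_0)}-1-\gamma(\xi-\xi_0)\bigr]$ to $\vp_\delta$ with a small tuning parameter $\gamma$ and defines $\xi_1$ implicitly as the zero of $\hat\vp'$, whereas you take a Hermite polynomial on a window of prescribed length $L$.

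The gap is in the transition-interval verification, which you acknowledge is the crux but then describe with the wrong sign. You write that one should ``choose $L$ small enough that the artificial curvature $\tilde\vp_\delta''=O(\vp_\delta(\xi_0)/L^2)$ introduced by the interpolation is dominated by the stabilizing term $-(a-1)\tilde\vp_\delta$.'' Since the sink is $O\bigl((a-1)\vp_\delta(\xi_0)\bigr)$ and the curvature scales like $\vp_\delta(\xi_0)/L^2$, domination requires $L\gtrsim(a-1)^{-1/2}$, i.e.\ $L$ \emph{large} enough, not small. At the same time $L$ cannot be taken large, because the interpolant starts with slope $\vp_\delta'(\xi_0)\approx\tilde\gamma_1\vp_\delta(\xi_0)$ and must land at $c=\tfrac12\vp_\delta(\xi_0)$ with slope $0$; for a cubic or quintic Hermite interpolant this forces $L\lesssim 1/|\tilde\gamma_1|$ for monotonicity and positivity, so $L$ is squeezed into a window whose existence is precisely what needs to be checked. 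You also invoke the characteristic identity (you write $\gamma_1^2-s\gamma_1=a-1$; the correct identity with the $\delta$-perturbed problem is $\tilde\gamma_1^2-s\tilde\gamma_1=\tilde a-1$ with $\tilde a=a(1-\delta)-\delta$), which does cancel the leading-order curvature of $\vp_\delta$ itself, but the residual curvature of the polynomial bridge relative to $e^{\tilde\gamma_1(\xi-\xi_0)}$ is what remains, and no estimate of it is given. In the paper's proof this residual is tamed by the explicit function $g_\gamma(z)=\delta_0e^{\tilde\gamma_1 z}-\gamma^2e^{\gamma z}+\sigma_0(e^{\gamma z}-1-\gamma z)$, whose positivity for small $\gamma$ requires the nontrivial facts $z_\gamma\to+\infty$ and $\gamma z_\gamma\to 0$; there is no analogous computation in your sketch, and it is not at all clear that the specific quintic you describe satisfies the required differential inequality uniformly on $[\xi_0,\xi_1]$. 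As written, the proof of the lemma is incomplete on the only genuinely delicate piece.
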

\begin{proof}
Denote $\tilde a:=a(1-\delta)-\delta$ and 
\[
\tilde\gamma_1:=\frac{s_{\mu,\delta}-\sqrt{s_{\mu,\delta}^2+4(\tilde a-1)}}{2}.
\]
Then by standard ODE theory as used in the proofs of Lemmas \ref{(3.1)s_0} and \ref{uniqueness}, we have, similar to \eqref{vp},  
\bes\lbl{vp_delta}
\vp_\delta(\xi)=C_0e^{\tilde\gamma_1\xi}(1+o(1)),\; \vp_\delta'(\xi)=C_0\tilde\gamma_1e^{\tilde\gamma_1\xi}(1+o(1)) \mbox{ as } \xi\to+\infty,
\ees
 for some $C_0>0$.

Fix $\gamma>0$, with its value to be determined later,
and define, for $\xi\geq \xi_0$,
\[
\hat\vp(\xi):=\vp_\delta(\xi)+\vp_\delta(\xi_0)\left[e^{\gamma(\xi-\xi_0)}-1-\gamma(\xi-\xi_0)\right].
\]
By \eqref{vp_delta} we have, for large $\xi_0$ and $\xi\geq \xi_0$,
\bess
\hat\vp'(\xi)&=&\vp_\delta'(\xi)+\vp_\delta(\xi_0)\left[\gamma e^{\gamma(\xi-\xi_0)}-\gamma\right]\\
&=& C_0\tilde\gamma_1e^{\tilde\gamma_1\xi}(1+o(1))+C_0e^{\tilde\gamma_1\xi_0}(1+o(1))\left[\gamma e^{\gamma(\xi-\xi_0)}-\gamma\right]\\
&=&C_0 e^{\tilde\gamma_1\xi_0}\left[\tilde\gamma_1e^{\tilde\gamma_1z}+\gamma (e^{\gamma z}-1)\right](1+o(1)),
\eess
where $z:=\xi-\xi_0$. Since 
\[
\tilde\gamma_1<0<\gamma,
\]
the strictly increasing function
\[
f(z):=\tilde\gamma_1 e^{\tilde\gamma_1z}+\gamma (e^{\gamma z}-1)
\]
has a unique positive zero $z_0$. It follows that, for all large $\xi_0$,
\[
\hat\vp'(\xi_0+z_0+1)>0.
\]
Due to $\hat\vp'(\xi_0)=\vp_\delta'(\xi_0)<0$, there exists a unique $\xi_1=\xi_1(\xi_0)\in (\xi_0, \xi_0+z_0+1)$ such that
\[
\hat\vp'(\xi)<0 \; (\forall \xi\in[\xi_0,\xi_1)),\; \hat\vp'(\xi_1)=0.
\]
Define
\[
\tilde\vp_\delta(\xi)=\tilde\vp_{\delta,\xi_0}(\xi):=\left\{\begin{array}{ll}
\vp_\delta(\xi),& \xi\leq\xi_0,\\
\hat\vp(\xi), & \xi_0\leq\xi\leq\xi_1,\\
\hat\vp(\xi_1), & \xi\geq \xi_1.
\end{array}
\right.
\]
Then clearly $\tilde\vp_\delta\in C^1(\R)$,\; $\tilde\vp_\delta(\xi_1)>\vp_\delta(\xi_1)>0$,  and
\[
\limsup_{\xi_0\to+\infty}\left[\xi_1(\xi_0)-\xi_0\right]\leq z_0+1,\;\;\tilde\vp_\delta'\leq 0,\;\tilde\vp_\delta\geq\vp_\delta \; (\forall\xi\in\R).
\]
Moreover, it is also easily seen that
\[
\lim_{\xi_0\to+\infty} \|\tilde\vp_{\delta,\xi_0}-\vp_\delta\|_{L^\infty(\R)}=0.
\]
Thus for all large $\xi_0$,
\bess
s\psi_\delta'-d\psi_\delta''&=&r\psi_\delta(1-\delta-\psi_\delta-b\varphi_\delta)\\
&\leq& r\psi_\delta(1-\psi_\delta-b\tilde\vp_\delta)
\; \;\; (\forall \xi\in\R).
\eess

To complete the proof, it remains to show that, for every fixed large $\xi_0$, in the weak sense,
\bes\lbl{tilde-vp}
s\tilde\vp_\delta'-\tilde\vp_\delta''\geq \tilde\vp_\delta(1-\tilde\vp_\delta-a\psi_\delta) \; (\forall\xi\in\R).
\ees
Since $\tilde\vp_\delta$ is $C^1$, it suffices to show the above inequality  for
$\xi<\xi_0$, $\xi\in (\xi_0,\xi_1)$ and $\xi>\xi_1$ separately.

For $\xi<\xi_0$, \eqref{tilde-vp} follows directly from \eqref{1+delta-sw}. Since $a\psi_\delta(+\infty)=a(1-\delta)>1$,
we have
\[
1-\tilde\vp_\delta-a\psi_\delta<0\;\; (\forall \xi>\xi_1)
\]
provided that $\xi_0$ is large. Hence for every fixed large $\xi_0$, \eqref{tilde-vp} holds for $\xi>\xi_1$.

We next consider the case $\xi\in (\xi_0,\xi_1)$. Denote
\[
\eta(\xi):=e^{\gamma(\xi-\xi_0)}-1-\gamma(\xi-\xi_0).
\]
Then
\[
s\eta'- \eta''>-\gamma^2 e^{\gamma(\xi-\xi_0)} \;\; (\forall\xi>\xi_0),
\]
and hence, for $ \xi\in (\xi_0,\xi_1)$,
\bess
s\hat\vp'-\hat\vp''&=&s\vp_\delta'-\vp_\delta''+\vp_\delta(\xi_0)(s\eta'-\eta'')\\
&=&\vp_\delta(1+\delta-\vp_\delta-a\psi_\delta)+\vp_\delta(\xi_0)(s\eta'-\eta'')\\
&> & \vp_\delta(1-\vp_\delta-a\psi_\delta)+\delta \vp_\delta-\vp_\delta(\xi_0)\gamma^2e^{\gamma(\xi-\xi_0)}.
\eess
On the other hand, for such $\xi$,
\bess
\hat\vp(1-\hat\vp-a\psi_\delta)&=& \vp_\delta(1-\hat\vp-a\psi_\delta)+\vp_\delta(\xi_0)
\eta(1-\hat\vp-a\psi_\delta)\\
&\leq& \vp_\delta(1-\vp_\delta-a\psi_\delta)+\vp_\delta(\xi_0)
\eta(1-a\psi_\delta)\\
&=& \vp_\delta(1-\vp_\delta-a\psi_\delta)-\sigma \vp_\delta(\xi_0)
\eta +\epsilon(\xi)\vp_\delta,
\eess
where
\[
\sigma:=a(1-\delta)-1,\;\;
\epsilon(\xi):=\frac{\vp_\delta(\xi_0)}{\vp_\delta(\xi)}\eta(\xi)a(1-\delta-\psi_\delta(\xi))\to 0
\]
uniformly for $\xi\in [\xi_0,\xi_1]$ as $\xi_0\to+\infty$.

Therefore \eqref{tilde-vp} will hold for $\xi\in (\xi_0,\xi_1)$ with large $\xi_0$, provided we can show that
\bes\lbl{3.10}
\frac\delta 2 \vp_\delta(\xi)-\vp_\delta(\xi_0)\gamma^2e^{\gamma(\xi-\xi_0)}\geq -\sigma \vp_\delta(\xi_0)
\eta(\xi)\;\; (\forall \xi\in [\xi_0,\xi_1]).
\ees
By \eqref{vp_delta}, for $\xi\in [\xi_0,\xi_1]$ and large $\xi_0$,
\[
\vp_\delta(\xi)=C_0e^{\tilde\gamma_1\xi}(1+o(1)),\; \vp_\delta(\xi_0)=C_0e^{\tilde\gamma_1\xi_0}(1+o(1)).
\]
Therefore \eqref{3.10} will hold for all large $\xi_0$ if for some $\delta_0\in (0, \delta/2)$ and $\sigma_0\in (0, \sigma)$, we have
\[
\delta_0 e^{\tilde\gamma_1(\xi-\xi_0)}-\gamma^2e^{\gamma(\xi-\xi_0)}+\sigma_0\left[e^{\gamma(\xi-\xi_0)}-1-\gamma(\xi-\xi_0)\right]\geq 0 \; \;\; (\forall \xi\in [\xi_0,\xi_1]).
\]
We show next that this is the case if $\gamma>0$ is chosen sufficiently small.
To this end, we consider
the function
\[
g(z)=g_\gamma(z):=\delta_0 e^{\tilde\gamma_1 z}-\gamma^2e^{\gamma z}+\sigma_0\left(e^{\gamma z}-1-\gamma z\right).
\]
We claim that for small $\gamma>0$, $g_\gamma(z)> 0$ for all $z\geq 0$. 
We calculate
\[
g'(z)=\delta_0\tilde\gamma_1 e^{\tilde\gamma_1 z}-\gamma^3 e^{\gamma z}+\gamma \sigma_0(e^{\gamma z}-1).
\]
Whenever $\gamma^2<\sigma_0$, clearly $g'(z)$ is strictly increasing with $g(0)<0$ and $g(+\infty)=+\infty$. Therefore there exists a unique $z_\gamma\in (0, +\infty)$ such that $g'(z_\gamma)=0$ and $g(z)$ attains its minimum over $[0,+\infty)$ at $z=z_\gamma$.
We show next that
\bes\lbl{3.11}
\lim_{\gamma\to 0} z_\gamma=+\infty,\;\; \lim_{\gamma\to 0}\gamma z_\gamma=0.
\ees
Indeed, for any small $\epsilon>0$, we easily see that
\[
 g'(\epsilon^{-1})\to \delta_0\tilde\gamma_1 e^{\tilde\gamma_1 \epsilon^{-1}}<0,
\;\;
\gamma^{-1}g'(\gamma^{-1} \epsilon)\to \sigma_0(e^\epsilon-1)>0 \mbox{ as } \gamma\to 0.
\]
Therefore for all small $\gamma>0$, 
\[
\epsilon^{-1}<z_\gamma< \epsilon \gamma^{-1},
\]
which clearly implies \eqref{3.11}. To complete our proof, it suffices to show that $g(z_\gamma)>0$ for all small $\gamma>0$.
From $g'(z_\gamma)=0$ we obtain
\[
\delta_0 e^{\tilde\gamma_1 z_\gamma}=\frac{1}{|\tilde\gamma_1|}\left[\sigma_0 \gamma (e^{\gamma z_\gamma}-1)-\gamma^3 e^{\gamma z_\gamma}\right].
\]
Hence, by \eqref{3.11} and the elementary inequality $e^x>1+x \; (\forall x>0)$,
\bess
g(z_\gamma)&=&\frac{1}{|\tilde\gamma_1|}\left[\sigma_0 \gamma (e^{\gamma z_\gamma}-1)-\gamma^3 e^{\gamma z_\gamma}\right]-\gamma^2e^{\gamma z_\gamma}+\sigma_0\left(e^{\gamma z_\gamma}-1-\gamma z_\gamma\right)\\
&>& \frac{1}{|\tilde\gamma_1|}\left(\sigma_0\gamma^2 z_\gamma-\gamma^3e^{\gamma z_\gamma}\right)-\gamma^2 e^{\gamma z_\gamma}\\
&=&\gamma^2\left(\frac{\sigma_0}{|\tilde\gamma_1|}z_\gamma-\frac{\gamma}{|\tilde\gamma_1|}e^{\gamma z_\gamma}-e^{\gamma z_\gamma}\right)\\
&>&0 \mbox{\;\;\;\;\;\;\;  for all small $\gamma>0$.}
\eess
The proof is now complete.
\end{proof}

\begin{proof}[Proof of Lemma \ref{lem3.2}]
For small $\delta>0$, let $(\tilde\vp_\delta(\xi),\psi_\delta(\xi))$ be given by Lemma \ref{lem3.3}.
Let $V(t)$ be the unique solution of
\[
V'=V(1-V),\; \; V(0)=\|v_0\|_\infty.
\]
Then a simple comparison consideration yields $v(t,r)\leq V(t)$ for $t>0$ and $r\geq 0$. Since $\lim_{t\to\infty}
V(t)=1$, we can find $T_0>0$ such that
\[
v(t, r)<1+\frac{\delta}{2} \;\; (\forall t\geq T_0,\;\forall r\geq 0).
\]
Choose $R_0>0$ so that
\[
(1+d)\frac{N-1}{R_0}<\delta.
\]
Then define
\bess
\underline h(t):&=& (s_{\mu,\delta}-\delta)t+R_0+1,\\
\underline u(t,r):&=& \psi_\delta(\underline h(t)-r),\\
\overline v(t,r):&=& \tilde\vp_\delta(\underline h(t)-r).
\eess
Since $\tilde\vp_\delta(-\infty)=1+\delta$, there exists $R_1>\underline h(0)$ such that
\[
\overline v(0,r)=\tilde\vp_\delta(\underline h(0)-r)>1+\frac{\delta}{2}\;\; (\forall r\geq R_1).
\]
Clearly we always have
\[
\overline v(t,r)\geq \tilde\vp_\delta(\xi_1)>0,\; \underline u(t,r)<1-\delta.
\]
By our assumption that spreading of $u$ happens, we can find $T>T_0$ such that
\[
h(t)>R_1,\; u(t,r)>1-\delta,\; v(t,r)<\tilde\vp_\delta(\xi_1)\;\; (\forall t\geq T,\;\forall r\in 
[0, R_1]).
\]
Therefore we have
\[
h(T)>\underline h(0),\; u(T, r)>\underline u(0, r) \; (\forall r\in [0, \underline h(0)]),\; v(T, r)<\overline v(0, r)\; (\forall r\geq 0).
\]
Moreover,
\[
\underline u(t, R_0)<1-\delta<u(t+T, R_0),\; \overline v(t, R_0)\geq \tilde\vp_\delta(\xi_1)>v(t+T, R_0)\; (\forall t>0).
\]
Clearly
\[
\underline h'(t)=s_{\mu,\delta}-\delta<s_{\mu,\delta}=\mu \psi_\delta'(0)=-\mu\underline u_r(t,\underline h(t)) \; (\forall t>0).
\]
Finally, we have, for $t>0$ and $r\in [R_0, \underline h(t))$,
\bess
\underline u_t-d\underline u_{rr}-d\frac{N-1}{r}\underline u_r&=&\left(s_{\mu,\delta}-\delta+d\frac{N-1}{r}\right)\psi_\delta'-d\psi_\delta''\\
&\leq& s_{\mu,\delta}\psi'_\delta-d\psi_\delta''\\
&\leq& r\psi_\delta(1-\psi_\delta-b\tilde\vp_\delta)\\
&=& r\underline u(1-\underline u-b\overline v),
\eess
and for $t>0$ and $r\geq R_0$,
\bess
\overline v_t-\overline v_{rr}-\frac{N-1}{r}\overline v_r&=&\left(s_{\mu,\delta}-\delta+\frac{N-1}{r}\right)\tilde\vp_\delta'-\tilde\vp_\delta''\\
&\geq& s_{\mu,\delta}\tilde\vp'_\delta-\tilde\vp_\delta''\\
&\geq& \tilde\vp_\delta(1-\tilde\vp_\delta-a\psi_\delta)\\
&=& \overline v(1-\overline v-a\underline u).
\eess
Therefore we can apply the comparison principle in \cite{DLin1} (namely Lemma 2.6 there with obvious modifications) to obtain
\bess
\underline h(t)&\leq& h(t+T) \;\; (\forall t>0),\\
\underline u(t,r)&\leq& u(t+T,r)\;\; (\forall t>0,\;\forall r\in [R_0, \underline h(t)]),\\
\overline v(t,r)&\geq& v(t+T,r)\;\; (\forall t>0,\; \forall r\geq R_0).
\eess
It follows that
\[
\liminf_{t\to+\infty}\frac{h(t)}{t}\geq s_{\mu,\delta}-\delta.
\]
Letting $\delta\to 0$, we obtain \eqref{l-bd}.
\end{proof}

\begin{rem}\lbl{remark2}{\rm
Our proof of Theorem \ref{main} above also provides the following estimates for $u$ and $v$:}
\bess
u(t,r)&\geq& \psi_\delta(\underline h(t-T)-r) \; (\forall t>T,\;\forall  r\in [0, \underline h(t-T)]),\\
u(t,r)&\leq & \psi^\delta(\overline h(t-T)-r) \; (\forall t>T,\;\forall r\in [0, h(t)]),\\
v(t,r)&\geq& \vp^\delta(\overline h(t-T)-r)\; (\forall t>T,\;\forall r\in [0,+\infty)),\\
v(t,r)&\leq& \tilde\vp_\delta(\underline h(t-T)-r)\; (\forall t>T,\;\forall r\in [0,+\infty)).
\eess
\end{rem}

\begin{rem}\lbl{v_0-not}
{\rm If  \eqref{v_0} is not satisfied in Theorem \ref{main}, then it is not difficult to find examples of $v_0>0$ such that spreading of $u$ happens and
$\lim_{t\to+\infty} h(t)/t=k_\mu>s_\mu$, where $k_\mu$ is the spreading speed of a single species free boundary problem, obtained by letting $v\equiv 0$ in \eqref{p1}.
We leave the detailed construction of such examples to the interested reader.
}
\end{rem}

\end{document}